\newcommand{\bq}{\mathbf q }
\newcommand{\but}{\mathbf{\tilde u}}
\newcommand{\ff}{\mathbf f}
\newcommand{\bD}{\mathbf D}
\newcommand{\bU}{\mathbf U}
\newcommand{\mcT}{\mathcal T}
\newcommand{\phI}{\phi_{\textrm {\begin{tiny}I\end{tiny}}}}
\newcommand{\vv}{{\mathbf V}}
\newcommand{\bV}{\mathbf V}
\newcommand{\Om}{{\Omega}}
\newcommand{\pOm}{{\partial \Omega}}
\newcommand{\om}{{\boldsymbol\omega}}
\newcommand{\tr }{{\textrm {tr\,}}}
\newcommand{\etaperp}{\eta_{\perp}}
\newcommand{\etapar}{\eta_{\|}}
\newcommand{\nablax}{{\nabla_{\tiny{\bx}}}}
\newcommand{\by}{{\mathbf y}}
\newcommand{\bg}{{\mathbf g}}
\newcommand{\uu}{{\mathbf U}}
\newcommand{\mue}{{\mu_{\textrm{\begin{tiny}E\end{tiny}}}}}
\newcommand{\norm}{{\big|\!\big|}}
\newcommand{\bn}{{\mathbf n}}
\newcommand{\Cauchy}{\mathcal T}
\newcommand{\calT}{\mathcal T}
\newcommand{\bcalw}{\mathbf{\mathcal W}}
 \newtheorem{example}{Example}[subsection]
\newcommand{\bu}{{\mathbf u}}
\newcommand{\bv}{\mathbf v}
\newcommand{\bx}{{\mathbf x}}
\newcommand{\bw}{{\mathbf w}}
\newcommand{\op}{{\overline{p}}}
\newcommand{\bW}{{\mathbf W}}
\newcommand{\calP}{\mathcal P}
\newcommand{\dive}{\nabla \cdot}
\begin{document}

\title{Effects of permeability and viscosity in linear polymeric gels}
\author{B. Chabaud\thanks{Los Alamos National Laboratory, Computational Physics Division,
Los Alamos, NM 87545 USA} \and M. C. Calderer\thanks{School of Mathematics, University of Minnesota, 
206 Church Street S.E., Minneapolis, MN 55455, USA.}}

\maketitle
\pagenumbering{arabic}

\begin{abstract}
We propose and analyze a mathematical model of the mechanics of gels, consisting of the laws of balance of mass and linear momentum.  We consider a gel to be an immiscible and incompressible mixture 
of a nonlinearly elastic polymer and a fluid. The problems that we study are motivated by  predictions of the life cycle of body-implantable medical devices.  
Scaling arguments suggest neglecting inertia terms, and therefore, we consider the quasi-static approximation to the dynamics.   We  focus on the linearized system about relevant equilibrium solutions, 
and derive sufficient conditions for the solvability of the time dependent problems. These turn out to be  conditions that guarantee local stability of the equilibrium solutions. 
The fact that some equilibrium solutions of interest are not stress free brings additional challenges to the analysis, and, in particular, to the derivation of the energy law of the systems. It also singles
  out the special role of the rotations in the analysis.  From the point of view of applications, we point out that  the conditions that guarantee stability of solutions also provide criteria to select  material parameters for devices. 
   The boundary conditions that we consider are of two types, first displacement-traction conditions for the governing equation of the polymer component,
 and secondly permeability conditions for the fluid equation.   We present a rigorous study of these conditions in terms of balance laws of the fluid across the interface between the gel and
 its environment \cite{chen-mori-micek-calderer2012},   and use it to justify  heuristic permeability formulations found in the literature \cite{DOI02}, \cite{DOI04}. 
  We also consider the cases of  viscous and inviscid solvent, assume Newtonian dissipation for the   polymer component.  We establish existence of weak solutions for the different boundary permeability conditions 
and viscosity assumptions. 
We present two-dimensional, finite element numerical simulations  to study pressure concentration on edges, in connection with the {\it debonding } phenomenon between the gel and the boundary substrate upon 
reaching a critical pressure. 
\end{abstract}
\begin{keywords}
gel, elasticity, viscosity,  permeability, diffusion, stability
\end{keywords}
\begin{AMS}
35Q74, 35J25, 35Q35, 74B15, 74B20, 74F20
\end{AMS}
\section{Introduction}
This article addresses mechanical modeling, stability of equilibrium states  and  analysis of boundary value problems of quasi-static  gel dynamics. 
 We assume that a gel is an incompressible and  immiscible mixture of polymer and solvent, and study the  coupled system of equations of balance of mass and linear momentum of the components.
 Boundary conditions are  of traction-displacement type   together with statements of the permeability of the gel boundary to the environmental fluid.   
 This work is motivated by problems arising in the prediction of the  life cycle  of body-implantable medical devices. 
 
 Gels consist of crosslinked or entangled polymeric networks holding fluid. In its swollen state, the polymer confines the solvent and, in turn, the solvent prevents the gel from collapsing into  dry polymer.
 Gels are abundantly  present in nature and occur when materials are placed in a fluid environment.  Devices such as pacemakers, bone replacement units and artificial skin turn into gel when 
 implanted in the body.
 The materials that constitute a device differ in swelling ratio, this causing a build up of  stress at the interfaces, as well as at the contact between the device and its  boundary support. 
High stresses, above the manufacturer's guaranteed   threshold, may cause debonding instability leading to device failure.

 The equations that we analyze encode relevant properties of gel  behavior  such as solvent diffusion, transport of polymer and solvent, friction and viscosity, elasticity, and time relaxation.
 They consist of equations of balance of mass and linear momentum for  the polymer and fluid components, together with the saturation ({\it incompressibility}) constraint. 
  Assuming that the polymer is an isotropic, elastic solid, 
  the total  energy of the  gel  is the sum of the  elastic stored energy function of the polymer and the Flory-Huggins energy of mixing.  The variable fields of the equilibrium  
problem consist of the  volume fractions of the gel components and  the deformation gradient tensor of the polymer. These fields are not all independent, but they are  related by the equation of 
balance of mass of the polymer and the saturation condition on the gel. The problem of constrained energy minimization  studied by  Micek, Rognes and Calderer in \cite{rognes-calderer-micek09} 
established sufficient conditions on the energy and on  the imposed boundary conditions that guarantee existence of a global energy minimizer. 
The authors also developed, analyzed and numerically implemented a mixed finite element discretization of the equilibrium linear operator.  A challenge of the analysis is the presence of residual stress in
 the reference configuration of the polymer. The work yielded numerical evaluations of shear stress at the interface between two gels representing  bone tissue and the artificial implant. 
 However, gel behavior is inherently a time evolution problem due to the combined effects of transport, diffusion and dissipation. This serves as a motivation to  the work presented in this article. 
 
 Scaling arguments for gels consisting of polymer melts  justify neglecting the inertia terms  and analyzing the quasi-static system. 
 Specifically, the size of polymer dissipation effects with respect to inertia results in the latter being dominant at time scales on the order of $10^{-7}$ seconds \cite{CaZha07}.  
Of course, such time scales are negligible for biomedical devices with typical  life-cycle of 20 years.  From a different perspective,  the effect of the inertia terms was analyzed by  
 Zhang and Calderer \cite{CaZha07}. They studied the free boundary problem of gel swelling, in one space dimension, neglecting the Newtonian dissipation of the both gel components. 
  In such a case, the governing equation   turns out to be a   frictional, weakly dissipative,   hyperbolic partial differential equation \cite{Dafermos95}, \cite{Da03}.
 The scaling that justifies retaining the inertia effects    is consistent with the dynamics of  polysaccharide gel  networks found in many living systems, such as in gliding myxobacteria \cite{KAISER03},
 \cite{RD81}.

 The model proposed here shares analogies with deformable porous media flow models but has the additional feature of accounting for interaction between fluid and polymer through the Flory-Huggins energy. 
From a different perspective, especially challenging multi-component mixtures are used in geology and  in  oil and natural gas recovery models  \cite{cushman96a}, \cite{cushman96b}, \cite{cushman00}. In these models, a relevant role is played by the Terzaghi's stress, which is the pressure exerted by the fluid in the pores  against the
stress applied to the rock. Its analog in the case of the gel is the pressure in the solvent accounted by the Flory-Huggins energy. The current analysis is not immediately extendable to the triphasic models when one of the components is compressible.

  The governing system  that we study is motivated by the stress-diffusion coupling model developed by Doi and Yamaue  \cite{DOI02}, \cite{DOI04}, \cite{DOI05}, \cite{DOI06}, 
and also by Hong et al. \cite{suo2}. Feng and He \cite{fenghe} studied purely inviscid gels with impermeable boundary as  governed by the stress-diffusion coupling model. 
 Our model treats the polymer component of the gel as a  nonlinear elastic solid, and includes, both, fluid and polymer dissipation, diffusion, the Flory-Huggins interaction, and accounts for 
different permeability properties of the interface between the gel and the surrounding fluid as well as traction-displacement conditions imposed on the polymer boundary. To our knowledge, all these combined effects have  not been accounted for in the previous works. We  develop an existence theory for the governing system of  partial differential equations, 
linearized about an equilibrium state.
Some of the tools presented  in \cite{fenghe} have been applied to our analysis, in the case that the solvent is inviscid. One novelty of this work is the derivation 
of permeability conditions from  balance balance laws at the interface between the gel and its surrounding fluid, following a model developed for the treatment of
 polyelectrolyte gels \cite{chen-mori-micek-calderer2012}.

 We assume that the polymer component of the gel is an isotropic elastic material with  Newtonian dissipation.  The equations possess bulk, stress free,  equilibrium solutions that are pure expansion or 
compression.  We point out that, as for isotropic elasticity, if the energy is convex with respect to the deformation gradient, these are the only stress free critical points.  However, if the energy  is nonconvex, the system may also admit  non-spherical  equilibrium deformations. These states may be consistent with experimentally observed  
pattern structures  in polyelectrolyte gels \cite{gel-bamboo-pattern03},  \cite{sultan-baodoui-gelbuckling08} and \cite{Onuki89}.
Equilibrium gel states with nonzero stress are also relevant in many applications. In particular, reference configurations with residual stress may be counted as a special case of the former. 
 Our analysis, addresses the two types of linearization of the governing system, first, about stress free spherical deformations, and secondly about 
equilibrium solutions that satisfy mixed traction-displacement boundary conditions.    
Within this perspective, we may consider the process of device implantation as subjecting an originally stress free body  to displacement  initial and boundary conditions, as well as to the environmental
 stress of the surrounding tissue, and the permeability effects of such a contact. The device  
will no longer be at equilibrium under the newly imposed initial and boundary conditions,  and a dynamical process will begin at implantation. The second type of linearization is relevant to  the iterative process of solution of a nonlinear problem.

 We derive restrictions on the constitutive equations that ensure the coercivity of the static operators. These are also known as the Coleman and Noll conditions and guarantee the classical stability requirement that the stress work be non-negative in every strain (\cite{TruesdellNoll2010}, sections 52 and 83). 
 Moreover, we find that the procedure of deriving the energy relation brings out the special role of the rotations in the case that residual stresses are present.  

We assume that the reference configuration of the gel is that of the polymer network previous to the gel formation. Accordingly, the boundary of the current domain is that of the polymer,
 and it evolves with its velocity.
The Eulerian formulation of the governing system of the gel  and the natural Lagrangian setting of solid elasticity of the polymer present challenges to the analyses. These manifest themselves 
in the derivation of the energy relations satisfied by weak solutions of the governing systems. We consider the cases of  impermeable and fully permeable boundary between the gel and  the environmental fluid. 
 The case of a semipermeable gel boundary follows from the former, with some elementary modifications, and so, we omit its presentation. Finally, we point out that the conditions for local equilibria employed 
in the solvability of the time dependent problems are also sufficient to guarantee regularity of the weak solutions. 
 
In related work, we developed and analyzed a numerical method  based on finite elements to simulate solutions of the models presented here,  in two dimensional domains, in the case that the fluid is inviscid. We consider a gel sample in the unit square, subject to zero displacement in two opposite edges and to a fixed pressure of $10^4Pa$ in the other two. We calculate the stress components under the following criteria:
the relative scaling of the Flory-Huggins energy with respect to the elastic one, the degree of stiffness, expansion and compressibility of the polymer, and the type of boundary permeability. 
 The elasticity modulus of the polymer is set at $1$GPa, consistent with values encountered in device materials.  We find that pressure concentrates on the edges where the the displacement is held to zero, its values increasing with decreased compressibility, and large stiffness. Permeability also promotes stress concentration, but with the interior stress being lower than that in the impermeable case.  If the pressure at an edge overcomes the {\it debonding } threshold, then it would detach from its support. The experimental literature reports on values of the debonding pressure for different materials ranging from 0.5 to 10 times the elastic modulus $\mu_E$, when the value of the latter is of the order of $10^7$Pa \cite{shull-creton2004}.

The paper is organized as follows.  In section 2, we present the balance laws of the gel,  the constitutive equations  and discuss the equilibrium states.  
    Section 3  is devoted to the linearization of the governing equations, formulation of boundary conditions, and the study of the local stability of the equilibrium solutions. 
In section 4, we study existence of weak solutions in the case that the fluid is inviscid, and section 5 is devoted to the case of viscous solvent. In both sections, the linearization is carried out
 about uniform dilations or compressions. 
 In section 6,  we derive the energy law in the case that the governing equations are linearized about an arbitrary  equilibrium solution.  This is the main ingredient in extending the analysis of
 sections 4 and 5 to the more general case, and for which we omit the details. The numerical simulations are presented in section 7. 
  Finally, in section 8, we draw some conclusions. 
 This work is  based on the Ph.D dissertation by Brandon Chabaud \cite{chabaud09}.

\section{Modeling of Gel Mechanics}
We assume that a gel is a saturated, incompressible and immiscible mixture of  elastic solid and  fluid.
 In the reference configuration, the polymer occupies a domain $\Omega\subset {\mathbf R}^3$. 
 The solid undergoes a deformation according to the one-to-one,  differentiable  map
\begin{equation}
\by=\by(\bx, t), \quad \textrm {such that\,\,} \det(\nablax\by)>0,\,\, \bx\in\Omega.
\end{equation}
 We let  $\Omega_t=\by(\Omega, t)$ denote  the domain occupied by the gel at time $t\geq 0$, and denote $F=\nablax\by$.  We label  the polymer and fluid components with indices {\it 1} and {\it 2}, respectively.
A point $\by\in\Omega_t$ is occupied by, both, solid and fluid at volume fractions
 $\phi_1=\phi_1(\by,t)$ and $\phi_2=\phi_2(\by,t)$, respectively.  We let $\bv_1=\bv_1(\by,t)$ and $\bv_2=\bv_2(\by, t)$
 denote the corresponding velocity fields.

An {immiscible} mixture is such that  the  constitutive equations  depend explicitly on the volume fractions $\phi_i, i=1,2$.
  We let $\rho_i$  denote the {mass} density of the $i$th component (per unit volume of gel). 
It is  related to the {intrinsic} density, $\gamma_i$, by the equation $\rho_i=\gamma_i \phi_i$, $i=1,2$. Moreover $\gamma_i=\textrm {constant}, i=1,2$  define
 an incompressible mixture.
Throughout this section, and  unless otherwise specified, the $\nabla$ notation refers to derivative with respect to the Eulerian space variable $\by$.
 The assumption of saturation of the mixture, that is, that no species other than polymer and fluid are present,
is expressed by the equation
 \begin{equation}\label{sat}
 \phi_1+\phi_2 = 1.
\end{equation}
(In some terminologies, this  condition is also known  as incompressibility). 
 The governing equations in the Eulerian form  consist of the balance of mass and linear momentum of each component  as well
 as the chain-rule relating the time derivative of the gradient of deformation with the velocity gradient:	
		\begin{eqnarray}
		%\begin{split}
		\frac{\partial \phi_i}{\partial t} +\nabla\cdot(\phi_i\bv_i) &&= 0, \label{mass}\\		
\gamma_i\phi_i\left(\frac{\partial \bv_i}{\partial t} + (\bv_i\cdot\
\nabla)\bv_i\right) &&= \nabla\cdot\calT_i-\phi_i\nabla p- \eta(\bv_i-\bv_j), \,\, 1\leq i\neq j\leq 2  \label{lin-momentum}\\ % {\bg}_i, \quad \bg_1+\bg_2=\mathbf 0,\\
			\frac{\partial F}{\partial t} + (\bv_1\cdot\nabla)F &&=(\nabla\bv_1)F,\label{chainrule}
			%\end{split}
		\end{eqnarray}
$\eta>0$ constant, $\by\in\Omega_t$, $i=1,2$.  Here  $\Cauchy_i$ is the Cauchy stress tensor of the $i$th component. 
The Lagrangian form of the equation of balance of mass of the polymer component  is 
	\begin{equation}\label{mass-lagrangian}
		\phi_1(\by(\bx,t), t)\,\det F(\bx,t) = \phi_I(\bx),\,\,\,\bx\in\Omega,	\end{equation}
where $0<\phI<1$  represents the prescribed, differentiable,  reference volume fraction.  	Adding up both equations in (\ref{mass}), and  taking (\ref{sat}) into account gives 
	\begin{equation}
	%\frac{\partial \phi_1}{\partial t} +\nabla\cdot(\phi_1\bv_1) &= 0, \label{mass1}\\
	\nabla\cdot(\phi_1\bv_1+ \phi_2\bv_2)=0. \label{div}
	\end{equation}
	The governing system consists of equations   (\ref{sat})-(\ref{chainrule})  supplemented with constitutive equations for $\calT_i$ together with
  prescribed boundary and initial conditions. 
 Rather than analyzing this system directly, we will instead consider the set of equations (\ref{sat}),  (\ref{lin-momentum}), (\ref{chainrule}),
 (\ref{mass-lagrangian}) and (\ref{div}).  
 
\subsection{Gel environment} Upon body implantation, the device becomes immersed in tissue %  that we assume to be an incompressible viscous fluid media, 
 occupying the domain
 $\mathcal B_t$, with $\Omega_t\subset\mathcal B_t$. We let  $\calT_b$ denote the stress in the surrounding fluid.  In \cite{chen-mori-micek-calderer2012}, we prescribe governing equations for the fluid in $\mathcal B_t$ as well as balance laws at the interface $\partial\Omega_t\cap\partial\mathcal B_t$.   In this article, we adopt the simplified assumption that the outside fluid exerts a prescribed pressure $\calT_b=-P_0 I$, and  do not  postulate balance laws  in 
 $\mathcal B_t$.   
Letting $ \bv_b $ denote  the velocity field of the outside fluid,
% and $T_b$ the corresponding viscous stress tensor, 
 %we formulate the balance of linear momentum as
% \begin{eqnarray}
% \label{body-equations}
%  \frac{\partial\bv_b}{\partial t}+ (\bv_p\cdot\nabla)\bv_b=&& -\nabla p+ \nabla\cdot T_b,\\
% \nabla\cdot\bv_b=&&0, \quad \textrm{in}\,\, \mathcal B_t.
% \end{eqnarray}
%  At each point, the interface $\partial\Omega_t$ moves with the normal component of the polymer velocity, $v_{\Gamma}:=\bv_1\cdot \bn$, where $\bn$ denotes 
% the unit normal vector to $\partial\Omega_t$, pointing away from the gel. 
we assume that the following relations hold on $\partial\Omega_t$ \cite{chen-mori-micek-calderer2012}:
\begin{eqnarray}
 \phi_2(\bv_2-\bv_1)\cdot\bn= && (\bv_b-\bv_1)\cdot\bn:=w, \label{balance-mass-fluid-interface}\\ 
(\bv_b-\bv_1)_{\|}=&& (\bv_2-\bv_1)_{\|}:=\mathbf q, \label{tangent-velocity}\\
(\calT_1+\calT_2)\bn+[p]\bn=&&-\gamma_2w^2(1-\frac{1}{\phi_2})\bn. \label{balance-linmomentum-fluid-interface}
\end{eqnarray}
Equation (\ref{balance-mass-fluid-interface}) is the statement of balance of fluid mass across $\partial\Omega_t$ and (\ref{balance-linmomentum-fluid-interface}) states the balance  of linear
momentum of the fluid across the interface. Here $[p]:= P_0-p$, where $p$ is the gel pressure at the interface limit. It is easy to check that the right hand side of equation (\ref{balance-linmomentum-fluid-interface})
is the change in linear momentum density of fluid crossing a unit area of  the interface, and the left hand side represents the total force per unit area acting on the fluid. 

Also, following \cite{chen-mori-micek-calderer2012}, we assume that the interface has an intrinsic viscosity, with coefficients $\etaperp>0$ and $\etapar>0$, 
respectively, affecting the fluid crossing it in the normal 
direction, or moving tangentially to it. Specifically, we assume that 
                   \begin{eqnarray}
\Pi_\perp:=&&\frac{1}{2}\gamma_2\big((\frac{w}{\phi_2})^2-w^2)-[p]- \bn\cdot(\frac{\calT_2}{\phi_2})\bn=\eta_{\perp} w,\label{piperp} \\
\Pi_\|:=&&(\calT_1\bn)_{\|}=\eta_{\|}(\bv_2-\bv_1)_{\|}. \label{pipar}
\end{eqnarray}
\subsection{Boundary conditions} We assume that the gel is surrounded by its onw fluid. The boundary conditions at the interface $\partial\Omega_t$ consist of the 
set of equations (\ref{balance-mass-fluid-interface})-(\ref{pipar}). In the case that the mass inertia of the fluid is neglected, they yield 
 two types of boundary conditions, traction on the gel and equations expressing the degree of permeability of the gel boundary to its surrounding fluid.
First of all, from (\ref{balance-mass-fluid-interface}) and (\ref{tangent-velocity}), we obtain the velocity $\bv_b$ of the fluid outside the gel but near the boundary:
\begin{equation}
 \bv_b\cdot\bn=(\phi_1\bv_1+\phi_2\bv_2)\cdot\bn,\quad 
 \bv_b\cdot\bq=\bv_2\cdot\bq,\quad \textrm{on}\,\partial\Omega_t,
\end{equation}
for any $\bq\neq 0$ such that  $\bq\cdot\bn=0$. Equation (\ref{balance-linmomentum-fluid-interface}), yields balance of force at the gel-fluid interface, 
\begin{equation}
-P_0\bn=(\calT_1+\calT_2-pI)\bn. \label{interface-traction}
 \end{equation}
Equations (\ref{piperp})-(\ref{pipar}) are statements of semipermeability of the gel interface. Neglecting inertia, and using equation 
(\ref{balance-mass-fluid-interface}),  the first one becomes
\begin{equation}
 -[p]- \bn\cdot(\frac{\calT_2}{\phi_2})\bn=\eta_{\perp} \phi_2(\bv_2-\bv_1)\cdot\bn.\label{semi1}
\end{equation}
Now, substituting equation (\ref{interface-traction}) into (\ref{pipar}), the latter becomes
\begin{equation}
 -\calT_2\bn|_{\|}=\eta_{\|}(\bv_2-\bv_1)_{\|}. \label{semi2}
\end{equation}
So, the boundary conditions on $\partial\Omega_t$ consist of equations (\ref{interface-traction}), (\ref{semi1}) and (\ref{semi2}).

We now take limits in equations (\ref{semi1}) and (\ref{semi2}) as $\etapar, \etaperp \to \infty$, 0, giving, 
\begin{eqnarray}
&& \phi_2(\bv_2-\bv_1)\cdot\bn=0, \quad \textrm {and}\quad (\bv_2-\bv_1)_{\|}=0, \label{impermeable-pure}\\
&&-[p]- \bn\cdot(\frac{\calT_2}{\phi_2})\bn=0, \quad \textrm {and}\quad  \calT_2\bn|_{\|}=0, \label{permeable-pure}
\end{eqnarray}
which correspond to the case of impermeable and fully permeable boundary, respectively.
  Rather than imposing the traction condition  (\ref{interface-traction}) on the whole interface $\partial\Omega_t$, we will consider mixed traction-displacement boundary
conditions. That is, 
\begin{eqnarray}
-P_0\bn=&&(\calT_1+\calT_2-pI)\bn, \,\,\, \textrm{on}\,\, \Gamma,  \label{boundary-traction0}\\
\tilde\bu=&& \tilde\bU, \quad \textrm{on} \,\,  \Gamma_0, \label{boundary-displacement}
% \by =&& \tilde \by, \,\,\, \textrm{on}\,\, \Gamma_0, \label{boundary-deformation0}
\end{eqnarray}
where $\Gamma_0\cup\Gamma=\partial\Omega_t$,  $\Gamma_0\cap\Gamma=\emptyset,$
and $ \tilde\bu=\by-\by_0$ denotes the displacement vector, with $\by_0$ representing an equilibrium deformation field to be chosen later, and $\tilde\bU$ a prescribed boundary displacement. 

% In later sections, 
% In the case that the fluid is inviscid, only the components normal to the boundary in (\ref{impermeable-pure}) and (\ref{permeable-pure0})  will be imposed, 
%in the  impermeable and fully permeable cases, respectively, and the normal component (\ref{semi1}) in the semipermeable case. 
%\medskip

%
%In later sections, we will consider linearization of the governing equations about  selected equilibrium deformation and volume fraction  $\by=\by_0$ and $\phi=\phi_0$, respectively. So, we formulate (\ref{boundary-deformation0})  in terms of the displacement field
%$\tilde\bu=\by-\by_0$, and  rewrite the traction-displacement boundary conditions (\ref{boundary-traction0}), (\ref{boundary-deformation0})  as 
%\begin{eqnarray}

%(\calT_1+\calT_2-p_{\Omega_t}I)\bn=&&-P_0\bn, \,\,\, \textrm{on}\,\, \Gamma.  \label{boundary-traction}
%\end{eqnarray}
%Likewise, the interface permeability conditions become
%\begin{eqnarray}
%&&p-P_0- \bn\cdot\big(\frac{\calT_2}{\phi_2}\bn\big)=0, \quad \textrm {and}\quad  \calT_2\bn|_{\|}=0, \label{permeable-pure}\\
%&&p-P_0- \bn\cdot\big(\frac{\calT_2}{\phi_2}\bn\big)=\eta_{\perp} \phi_2(\bv_2-\bv_1)\cdot\bn,\quad  
%\calT_2\bn|_{\|}=\eta_{\|}(\bv_2-\bv_1)_{\|},  \label{semi}\\
%\end{eqnarray}
%for the fully permeable and semipermeable gel, respectively,  together with (\ref{impermeable-pure}) for the impermeable gel. 
\medskip 

\noindent{\bf Remark.\,} The condition of semipermeability states the continuity of the force acting on the fluid across the interface. 
A related expression, with $\phi_2=1$ in  equation  (\ref{semi1}), is usually found in the literature.  

%%%%%%%%%%%%%%%%%%%%%%%%%%%%%%%%%%%%%%

\subsection{Energy, dissipation and constitutive equations}
  The  total energy of system consisting of  the gel immersed in the environmental  fluid  is
\begin{eqnarray}
\mathcal F=&&
\int_{\Omega_t}(\sum_{i=1,2}\frac{1}{2}\gamma_i\phi_i|\bv_i|^2+  \psi(F, \phi_1, \phi_2))\,d\by \label{total-energy}\\%+ \int_{\mathcal B_t}\frac{1}{2}\gamma_2|\bv_b|^2\,d\by\\\
&& \psi:=\phi_1 W(F)+ G(\phi_1, \phi_2)
%=% \int_{\Omega_t}()\,d\by
%=&& \int_{\Omega}(\phi_I W(F)+ (\det F) G(\phi_1, \phi_2))\,d\bx.
 \end{eqnarray}
 where $\psi$ denotes the free energy per unit deformed volume  of the gel, and   $W=W(F)$ and $G=G(\phi_1, \phi_2)$ represent the elastic energy density of the polymer and the Flory-Huggins energy of the gel, respectively, with 
 \begin{eqnarray} G(\phi_1, \phi_2)=&&a\, \phi_1\ln{{\phi_1}} + b\,\phi_2\ln{{\phi_2}} +c\, \phi_1\phi_2, \label{FH}\\
 a=&&\frac{K\theta }{V_mN_1},b=\frac{K\theta}{V_mN_2}, \,\, c=\frac{K \theta}{2V_m}\chi. \label{FH-coefficients}
\end{eqnarray}
Here $K\theta$ denotes the macroscopic energy unit, $V_m>0 $  is the volume occupied by one monomer, $N_1$ and $ N_2$ represent the number of lattice sites occupied by the polymer and solvent,
 respectively, and $N_x$ the number of monomers between entanglement points;  $\theta>0 $ denotes the absolute temperature, and $\chi>0$ represents the Flory interaction parameter \cite{flory} and \cite{GAS}. 
\begin{figure*}
 \centerline{\includegraphics[width=3.5in]{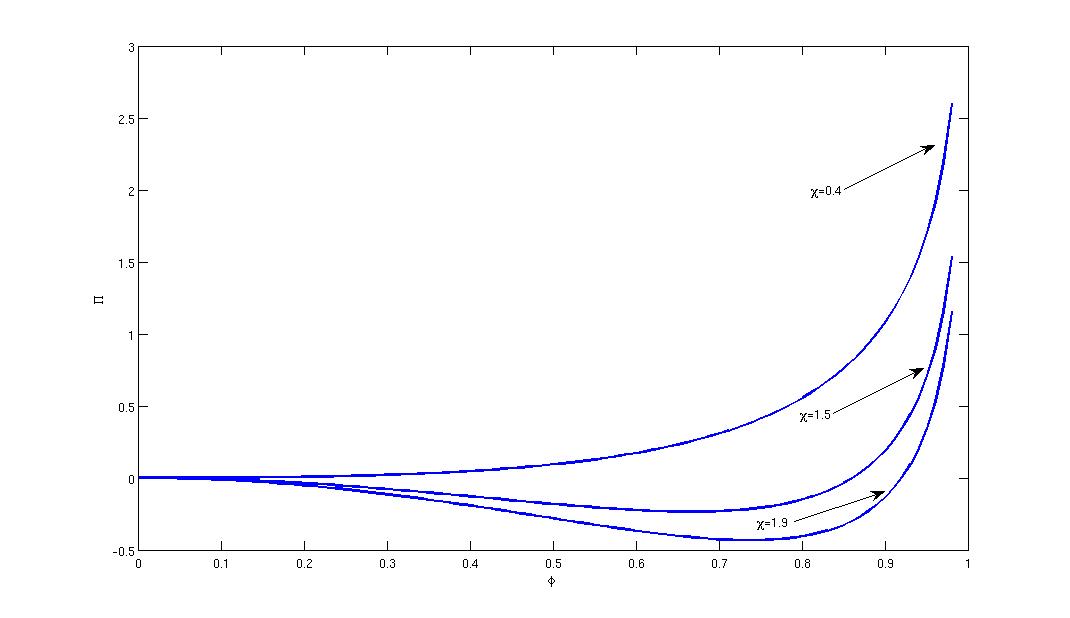}}
  \caption{{  These are plots of the osmotic pressure of the gel with respect to the polymer volume fraction $\phi$.  We point out the change of monotonicity of the graphs with respect to $\chi$. Such phenomenon corresponds to phase separation in the gel.   }} 
 \label{figure0}
 \end{figure*} 
 %%%%%%%%%%%%%%%%%%%%%
 We let  $\calT_i^r$ and $\calT_i^v, i=1, 2$ denote the reversible and the viscous components of the gel stress, respectively. So,  the total stress of the $i$-component is 
$\calT_i= \calT^r_i+ \calT^v_i, \, i=1,2$, with $ \calT^r_2=0$. Moreover, we assume that the dissipative stress of each component is Newtonian, 
\begin{eqnarray}
\calT_i^v=&&\eta_i\mathbf D(\bv_i)+\mu_i(\nabla\cdot\bv_i)I, \,\, \calT_b=\frac{\eta_2}{2}D(\bv_b)\,\,\label{viscous-stress}\\
&&\quad \mathbf D(\bv):=\frac{1}{2}(\nabla\bv+\nabla\bv^T), \nonumber
\end{eqnarray} 
$i=1, 2$;  $\eta_i>0$ and $\mu_i>0$ denote the shear and bulk viscosity coefficients, respectively, of the components.
  The total  stress $\calT$ of the gel is 
\begin{equation}
\calT=\calT_1+\calT_2. \label{total-stress}
\end{equation}
From now on, we will treat (\ref{div}) as a constraint, and  let $p$ denote the corresponding Lagrange multiplier. Letting $\epsilon(\bx,t)$ denote the internal energy density of
 the gel and  $\zeta(\bx,t)=\frac{1}{\theta}(\epsilon(\bx,t)-\psi(\bx,t))$ the entropy density, the Clausius-Duhem inequality states that $\dot\zeta\geq 0$ holds, for all admissible processes of the gel.  
This allows us to establish the following proposition, which proof is presented in \cite{calderer-chabaud-zhang10}.
\begin{proposition}
Suppose that  the Clausius-Duhem inequality holds for all admissible processes,  and let  $\{\bv_i, \phi_i, p\}$  smooth solutions of  equations (\ref{sat}),
 (\ref{lin-momentum}),
 (\ref{mass-lagrangian}), (\ref{div}), (\ref{piperp}), (\ref{pipar}) and (\ref{viscous-stress}). Suppose that the boundary conditions (\ref{balance-mass-fluid-interface})-(\ref{pipar}) are satisfied. 
  Then the following relations hold:
 \begin{eqnarray}
 &&\mathcal T^r_1=\hat\sigma-\pi I,   \,\, \hat\sigma= \phi_1\frac{\partial W}{\partial F}F^T \label{reversible1,2}\\
 &&\pi= \phi_1(\frac{\partial G}{\partial \phi_1}-\frac{\partial G}{\partial \phi_2})-G.
\label{osmotic}
\end{eqnarray} 
Moreover, the dissipation inequality 
\begin{eqnarray}
\frac{d\mathcal F}{dt}=&&-\int_{\Omega_t}\bigg(\eta_i\norm \bD(\bv_i)\norm^2+\mu_i(\nabla\cdot\bv_i)^2 +\eta\norm\bv_1-\bv_2\norm^2\bigg)\,d\by
-\int_{\partial\Omega_t}(\etaperp w^2+\etapar |\bq|^2)\,dS,
      \nonumber\\
        \end{eqnarray}
holds, where  $\bn$ denotes the unit outward normal to the boundary.
\end{proposition}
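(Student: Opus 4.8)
The plan is to run a Coleman--Noll argument: differentiate $\mathcal F$ in time along the motion, use the balance laws to reduce $\tfrac{d\mathcal F}{dt}$ to a reversible bulk power plus sign-definite dissipation plus boundary fluxes, and then invoke the Clausius--Duhem inequality to pin down the reversible stresses and to collapse the boundary terms. The decisive simplification I would make at the outset is to pull the integral back to the fixed reference domain $\Omega$ through $d\by=J\,d\bx$, $J=\det F$, so that $\tfrac{d\mathcal F}{dt}=\int_\Omega\partial_t[(\,\cdots)J]\,d\bx$ becomes differentiation under a fixed-domain integral. The Lagrangian mass balance (\ref{mass-lagrangian}) then supplies the key identities $\phi_1 J=\phi_I(\bx)$, independent of $t$, and $\partial_t J=J\,\nabla\cdot\bv_1$, while (\ref{mass}) together with (\ref{sat}) gives $\tfrac{D_1\phi_1}{Dt}=-\phi_1\nabla\cdot\bv_1$ and $\tfrac{D_1\phi_2}{Dt}=+\phi_1\nabla\cdot\bv_1$, where $\tfrac{D_1}{Dt}:=\partial_t+\bv_1\cdot\nabla$ follows the polymer.

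For the free energy I would compute $\partial_t(\psi J)=\partial_t(\phi_1 W J)+\partial_t(GJ)$. Since $\phi_1 J$ is constant in $t$, the elastic piece is $\phi_I\,\partial_t W=\phi_I\frac{\partial W}{\partial F}:\partial_t F$, and the chain rule (\ref{chainrule}) $\partial_t F=(\nabla\bv_1)F$ turns it into $J\,(\phi_1\frac{\partial W}{\partial F}F^T):\nabla\bv_1=J\,\hat\sigma:\nabla\bv_1$. The mixing piece collapses, via the $\phi_i$ material derivatives above, to $\partial_t(GJ)=J[\,G-\phi_1(\frac{\partial G}{\partial\phi_1}-\frac{\partial G}{\partial\phi_2})]\nabla\cdot\bv_1=-J\pi\,\nabla\cdot\bv_1$, which identifies $\pi$ exactly as in (\ref{osmotic}). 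Hence the reversible power density is $J(\hat\sigma-\pi I):\nabla\bv_1$, and since $\nabla\bv_1$ is a free process variable, the Clausius--Duhem inequality forces the reversible stress to do precisely this work, yielding $\calT^r_1=\hat\sigma-\pi I$ as in (\ref{reversible1,2}) (with $\calT^r_2=0$ and $p$ the multiplier of the constraint (\ref{div})).

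In parallel I would dot the momentum equation (\ref{lin-momentum}) with $\bv_i$ and use (\ref{mass}) to rewrite the inertial side as $\partial_t(\tfrac12\gamma_i\phi_i|\bv_i|^2)+\nabla\cdot(\tfrac12\gamma_i\phi_i|\bv_i|^2\bv_i)$. Summing over $i$, integrating, and applying the transport theorem with boundary velocity $\bv_1$ converts $\tfrac{d}{dt}$ of the kinetic energy into interior terms plus surface fluxes $\tfrac12\gamma_i\phi_i|\bv_i|^2(\bv_1-\bv_i)\cdot\bn$. Integration by parts sends $\bv_i\cdot(\nabla\cdot\calT_i)$ to $-\calT_i:\nabla\bv_i$ plus a boundary traction power, and the reversible interior part $-\calT^r_1:\nabla\bv_1$ cancels the free-energy power found above; the viscous law (\ref{viscous-stress}) produces $\calT^v_i:\nabla\bv_i=\eta_i\norm\bD(\bv_i)\norm^2+\mu_i(\nabla\cdot\bv_i)^2$, the drag terms sum to $\eta\norm\bv_1-\bv_2\norm^2$, and (\ref{div}) kills the interior pressure work, leaving only a boundary pressure flux.

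The remaining task, and the one I expect to be the main obstacle, is to show that all the surface integrals --- the kinetic fluxes (nonzero only for $i=2$, since the boundary moves with $\bv_1$), the traction powers $\bv_i\cdot\calT_i\bn$, and the pressure work --- collapse to exactly $-\int_{\partial\Omega_t}(\etaperp w^2+\etapar|\bq|^2)\,dS$. I would decompose each boundary velocity into normal and tangential parts, substitute the interface balances (\ref{balance-mass-fluid-interface})--(\ref{balance-linmomentum-fluid-interface}) and the intrinsic-viscosity laws (\ref{piperp})--(\ref{pipar}), and use $w=\phi_2(\bv_2-\bv_1)\cdot\bn$ and $\bq=(\bv_2-\bv_1)_\|$ to bring out the dissipative terms $\etaperp w^2$ and $\etapar|\bq|^2$. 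The delicate point is that the quadratic inertial contributions $-\gamma_2 w^2(1-\phi_2^{-1})$ in (\ref{balance-linmomentum-fluid-interface}) and $\tfrac12\gamma_2((w/\phi_2)^2-w^2)$ in (\ref{piperp}) must cancel the kinetic boundary fluxes exactly; handling this bookkeeping, while carrying along the residual stress of the chosen equilibrium (which blocks the usual stress-free simplifications and singles out the role of rotations), is the crux of the argument. Once the surface terms reduce as claimed, assembling the interior and boundary contributions gives the stated energy identity, whose right-hand side is manifestly nonpositive.
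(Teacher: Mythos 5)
First, a caveat on the comparison itself: the paper does not prove this proposition --- it states that ``the proof is presented in \cite{calderer-chabaud-zhang10}'' --- so there is no in-paper argument to measure you against. On its own terms, your plan is the standard Coleman--Noll energy argument and the interior half of it is correct and essentially complete: the pull-back via $\phi_1\det F=\phi_I$, the identity $\partial_t(\phi_1WJ)=J\,\hat\sigma\!:\!\nabla\bv_1$ with $\hat\sigma=\phi_1\frac{\partial W}{\partial F}F^T$, the computation $\partial_t(GJ)=-J\pi\,\nabla\cdot\bv_1$ identifying $\pi$, the cancellation of the reversible power against $-\calT_1^r\!:\!\nabla\bv_1$, the viscous and drag dissipation, and the elimination of the interior pressure work via (\ref{div}) are all right.

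The gap is exactly where you place it, but it is a real gap, not just deferred bookkeeping: the boundary reduction is the content of the proposition and you do not carry it out, and if one does carry it out it does not close without further input. Collecting the surface terms --- the kinetic flux $-\tfrac12\gamma_2|\bv_2|^2w$, the traction powers, and the pressure flux $-p(\bv_1\cdot\bn+w)$ --- and substituting (\ref{balance-linmomentum-fluid-interface}), (\ref{piperp}), (\ref{pipar}) does produce $-\etaperp w^2-\etapar|\bq|^2$, but it also leaves the residue $-P_0\,\bv_b\cdot\bn=-P_0(\bv_1\cdot\bn+w)$, i.e.\ the working of the environmental pressure, which is absent from the stated identity; and the inertial boundary contributions ($-\tfrac12\gamma_2|\bv_2|^2w$ versus the $w^2$-quadratic terms built into (\ref{balance-linmomentum-fluid-interface}) and (\ref{piperp})) involve $|\bv_2|^2$ rather than only the relative velocity, so they do not cancel by inspection. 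Either additional hypotheses (e.g.\ $P_0=0$, or neglect of fluid inertia consistent with the quasi-static setting the paper works in) or a further identity is needed to kill these terms, and your proposal neither supplies it nor flags which assumption it would rest on. Until that step is done, relations (\ref{reversible1,2})--(\ref{osmotic}) are established by your argument, but the dissipation equality is not.
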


As a result of the required  material frame-indifference, there exists a function $\hat W$ defined on the space of symmetric, positive definite tensors,  such that   $W(F)=\hat W(C), $  where $C=F^TF$.
 We let  $\mathcal P= 2\phi_I\frac{\partial \hat W(C)}{\partial C}$ denote the second Piola-Kirchhoff stress tensor.
We further  assume that the polymer is an isotropic elastic material, so there is  a scalar function $w $ of the principal invariants $\mathcal I=\{I_1, I_2, I_3\}$ of $C$,  such that 
$\hat W(C)=w(\mathcal I)$. In this case, the following representation holds (\cite{Gurtin-Fried10}, page 279):
\begin{eqnarray}
&&\frac{\partial\hat W}{\partial C}= \alpha_1(\mathcal I) I + \alpha_2(\mathcal I) C+ \alpha_0(\mathcal I) C^{-1}, \label{pk2-rep1}\\
&&\alpha_0= I_3\frac{\partial  w}{\partial I_3},\quad \alpha_1=\frac{\partial  w}{\partial I_1}+I_1\frac{\partial w}{\partial I_2}, \quad \alpha_2=-\frac{\partial w}{\partial I_2}.\label{alpha}
\end{eqnarray}
The Cauchy stress tensor  $\hat \sigma=\frac{1}{\det F}F\mathcal P(C)F^T$  in (\ref{reversible1,2}) has the form,
\begin{eqnarray}
&& \hat\sigma=\phI\big( \beta_0({\mathcal I})I +  \beta_1({\mathcal I}) B+ \beta_2({\mathcal I})B^{-1}\big), \label{sigma-isotropic1}\\
&&\beta_0= \frac{2}{\sqrt {I}_3}(I_2\frac{\partial  w}{\partial I_2}+ I_3 \frac{\partial  w}{\partial I_3}), \,\,  \beta_1= \frac{2}{\sqrt I_3}\frac{\partial  w}{\partial I_1}, \,\, \beta_2=-2\sqrt{I_3}\frac{\partial w}{\partial I_2}.\label{beta}
\end{eqnarray}
\begin{example}
Let us consider a Hadamard material  (\cite{CI87}, \cite{rognes-calderer-micek09}), 
\begin{eqnarray} &&w(I_1, I_2, I_3)= \frac{\mue}{2}\big(\frac{a_1}{s}I_1^s + \frac{\alpha}{r}I_3^{-r} + \frac{a_3}{q}I_3^q\big),\quad \mu_E=K\theta\mu_x, \label{example}\\
%\mathcal S^r_1=&&\mue\big( \nu F- \kappa F^{-T}\big)-pF^{-T}, \label{pk-relax} \\
%\mathcal T_1^r =&& \phi_1\mue \big(\nu B- \kappa I\big)-\phi_1pI, \label{cauchy-relax}\\
&&\hat \sigma= \frac{\phi_I\mue}{\sqrt I_3}\big(\nu B-\kappa I\big),\label {pk2-relax} \\
%&&\mathcal P= \mue\big(\nu I-\kappa C^{-1}\big)- p C^{-1}, \,\, \textrm {with}\label {pk2-relax} \\
&&\nu=\nu(\mathcal I)= a_1 I_1^{s-1}, \quad \kappa =\kappa(\mathcal I)= \frac{1}{\mue\phI}{\pi\sqrt{I_3}}+\alpha I_3^{-r}- a_3 I_3^q,  \label{kappa0}\\
&&\beta_1=\frac{\phi_I\mue}{\sqrt I_3} \nu, \,\, \beta_2=0, \,\, \beta_0= -\frac{\phi_I\mue}{\sqrt I_3}\kappa, \label{gamma0}
\end{eqnarray}where $\alpha>0$, $a_1>0,  a_3>0, $ $q, s, r\geq 1$  are constant. The parameter $\mu_x$ represents the crosslink density of the network.
\end{example}

Stress free bulk equilibrium states $(\phi_0, F_0, p_0)$  are constant fields  
satisfying (\ref{mass-lagrangian}), and 
\begin{equation} \calT_1^r(F_0, \phi_0)-\phi_0 p_0I= 0, \quad  -(1-\phi_0)p_0= 0, \label{bulk-equilibrium} \end{equation}
with $\calT_1^r $  as in  (\ref{reversible1,2}).
 It is easy to check that the former reduce to 
\begin{eqnarray}&& \beta_1 f^2+ \beta_2 f^{-2}= \phi_I^{-1}{\pi(\phi)}-\beta_0, \,\, {\textrm{where}}\label{equilibrium-general}\\
&&B=f^2I,   \,\,   \phi=\phi_I f^{-\frac{3}{2}}, \,\, f>0. \label{dilation} \end{eqnarray}
For the energy in (\ref{example}), equations (\ref{bulk-equilibrium}) become $ \nu B=\kappa I,$
%\begin{equation}
%\nu B=\kappa I, \label{equilibrium-equation}
%\end{equation} 
 with $\nu$ and $\kappa $ as in   (\ref{kappa0}). Equivalently,
\begin{equation}
\frac{\pi(\phi)}{\mue\phi}+\alpha (\frac{\phi}{\phI})^{2r}-a_3(\frac{\phI}{\phi})^{2q}=a_1 3^{s-1}(\frac{\phI}{\phi})^{\frac{2}{3}s}, \label{equilibrium-reduced}
\end{equation}
with $\pi(\phi)$ as in (\ref{FH}) and (\ref{osmotic}). We summarize the previous statements in the following:
\begin{proposition}Suppose that the gel is isotropic. Then the stress free bulk states are uniform dilations or compressions satisfying equation
 (\ref{equilibrium-general}) and (\ref{dilation}). In the case that the energy is given by (\ref{example}), there is a unique equilibrium state provided the material parameters satisfy
\begin{equation}
 \frac{\alpha}{\phI}>a_1 3^{s-1}\phI^{\frac{2s}{3}}+ a_3\phI^{2q}+\frac{1}{\mue}(c+b-a). \label{condition-equilibrium}
 \end{equation}
  Moreover, $\kappa(\phi_0)>0$, and $\phi_0>\phi^*>0$, where $\phi^*$ satisfies $\kappa(\phi^*)=0$. 
  \smallskip
\end{proposition}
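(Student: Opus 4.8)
The plan is to reduce the tensorial equilibrium problem to a single scalar equation and then analyze that equation, treating the three assertions in turn. First I would address the characterization as a uniform dilation. Since $\phi_0\in(0,1)$, the second equation in (\ref{bulk-equilibrium}) forces $p_0=0$, so the first reduces to $\calT_1^r(F_0,\phi_0)=0$, i.e.\ $\hat\sigma=\pi I$. Substituting the isotropic representation (\ref{sigma-isotropic1}) gives $\phI(\beta_1 B+\beta_2 B^{-1})=(\pi-\phI\beta_0)I$; since the right-hand side is spherical, $\beta_1 B+\beta_2 B^{-1}$ must be a multiple of the identity, which (when $\beta_2=0$, as in the Example, or more generally under convexity of $W$) forces $B=f^2 I$. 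Reading off the common eigenvalue yields the scalar relation (\ref{equilibrium-general}), while the balance of mass (\ref{mass-lagrangian}) supplies the accompanying constraint (\ref{dilation}).

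Next, for the Example I would insert (\ref{pk2-relax})--(\ref{kappa0}) into $\nu B=\kappa I$, which with $B=f^2I$ collapses to $\nu f^2=\kappa$ and, after expressing $I_1,I_3$ through $f$ and eliminating $f$ by means of (\ref{dilation}), to the reduced equilibrium equation (\ref{equilibrium-reduced}). The task is then to count the solutions $\phi\in(0,1)$ of $h(\phi):=\kappa(\phi)-a_1 3^{s-1}(\phI/\phi)^{2s/3}=0$. Here I would first compute the osmotic pressure explicitly from (\ref{FH}) and (\ref{osmotic}), obtaining $\pi(\phi)=-b\ln(1-\phi)+(a-b)\phi-c\phi^2$, which makes the limiting behaviour transparent: as $\phi\to0^+$ the terms $-a_3(\phI/\phi)^{2q}$ and $-a_1 3^{s-1}(\phI/\phi)^{2s/3}$ drive $h\to-\infty$, whereas as $\phi\to1^-$ the logarithmic blow-up of $\pi/(\mue\phi)$ drives $h\to+\infty$. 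Existence of at least one root then follows from the intermediate value theorem, \emph{independently} of (\ref{condition-equilibrium}).

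The substantive point is uniqueness, and this is where I expect the condition (\ref{condition-equilibrium}) to enter and where the main obstacle lies. I would prove that $h$ is strictly increasing by splitting it into monotone pieces: the power terms $\alpha(\phi/\phI)^{2r}$, $-a_3(\phI/\phi)^{2q}$ and $-a_1 3^{s-1}(\phI/\phi)^{2s/3}$ are each increasing, and the entropic contribution $-b\ln(1-\phi)/(\mue\phi)$ is increasing as well --- the latter via the auxiliary inequality $v(\phi):=\phi/(1-\phi)+\ln(1-\phi)>0$ on $(0,1)$, which I would verify from $v(0)=0$ and $v'(\phi)=\phi/(1-\phi)^2>0$. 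The only decreasing contribution is the Flory term $-c\phi/\mue$, so everything reduces to showing that the increasing pieces override its slope. This is precisely the destabilizing interaction responsible for the loss of monotonicity of the osmotic pressure (phase separation) visible in Figure \ref{figure0}, and controlling it is exactly what (\ref{condition-equilibrium}) does: the inequality bounds the Flory coefficient $c$ (together with $b-a$) by the elastic stiffness $\alpha/\phI$ net of the remaining elastic contributions, evaluated at the extreme state where the competing terms are weakest. Equivalently, one may phrase uniqueness as strict convexity, in the single dilation variable, of the free energy restricted to spherical deformations $F=fI$: its elastic part is manifestly convex, and (\ref{condition-equilibrium}) is what guarantees that the convex elastic contribution overrides the possibly concave Flory mixing term. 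I expect the delicate step to be extracting the precise algebraic form of (\ref{condition-equilibrium}), with its $\phI$-powers, from the worst-case estimate of these monotone terms.

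Finally I would dispatch the two remaining claims. The inequality $\kappa(\phi_0)>0$ is immediate from the equilibrium relation $\kappa=\nu f^2$, since $\nu=a_1 I_1^{s-1}>0$ and $f^2>0$. For the location $\phi_0>\phi^*$, I would note that $\kappa$ itself satisfies $\kappa\to-\infty$ as $\phi\to0^+$ and $\kappa\to+\infty$ as $\phi\to1^-$, and that the same domination argument makes $\kappa$ strictly increasing under (\ref{condition-equilibrium}); hence $\kappa$ has a unique zero $\phi^*\in(0,1)$, and $\kappa(\phi_0)>0=\kappa(\phi^*)$ together with monotonicity gives $\phi_0>\phi^*>0$.
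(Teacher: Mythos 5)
Your reduction of (\ref{bulk-equilibrium}) to the scalar equation (\ref{equilibrium-reduced}), and your existence argument via the limits $h\to-\infty$ as $\phi\to0^+$ and $h\to+\infty$ as $\phi\to1^-$ (driven by the explicit formula $\pi(\phi)=-b\ln(1-\phi)+(a-b)\phi-c\phi^2$, which is correct), follow essentially the same route as the paper; note that the paper offers no separate proof of this proposition at all --- it is introduced with ``we summarize the previous statements,'' so the only argument actually on record is the reduction you reproduce, and neither the uniqueness claim nor the assertions about $\kappa$ are argued there. Your observation that existence is independent of (\ref{condition-equilibrium}), and your derivation of $\kappa(\phi_0)=\nu f^2>0$ at any equilibrium, are correct and are genuine additions.

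The gap is in the uniqueness step, and you have flagged it yourself: you never derive the specific inequality (\ref{condition-equilibrium}) from the monotonicity requirement. Your strategy needs $h'(\phi)>0$ (and, for the final claim $\phi_0>\phi^*$, the stronger statement that $\kappa$ itself is strictly increasing so that $\phi^*$ is well defined and unique), which amounts to showing that the positive derivative contributions --- $\tfrac{b}{\mue}v(\phi)/\phi^2\ge \tfrac{b}{2\mue}$, $\tfrac{2r\alpha}{\phI}(\phi/\phI)^{2r-1}$, $2qa_3\phI^{2q}\phi^{-2q-1}$, and (for $h$) the $a_1$ term --- dominate the constant slope $c/\mue$ of the Flory term uniformly on $(0,1)$. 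But the individual terms you list do not have obvious uniform lower bounds matching the right-hand side of (\ref{condition-equilibrium}): for $r\ge 1$ the term $\tfrac{2r\alpha}{\phI}(\phi/\phI)^{2r-1}$ vanishes as $\phi\to0^+$ and equals $2r\alpha\phI^{-2r}$ (not $\alpha/\phI$) at $\phi=1$, while the $a_3$ and $a_1$ terms appear in (\ref{condition-equilibrium}) with the values $a_3\phI^{2q}$ and $a_13^{s-1}\phI^{2s/3}$ that they take at $\phi=1$ rather than as derivative bounds, and the combination $\tfrac{1}{\mue}(c+b-a)$ does not emerge from the worst-case slope $\tfrac{1}{\mue}(c-\tfrac{b}{2})$ of the entropic-plus-Flory part. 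So the precise algebraic form of (\ref{condition-equilibrium}), which is the actual content of the uniqueness assertion, is neither obtained nor verified to imply global monotonicity; as written, the argument establishes uniqueness only under an unstated condition of the form $\inf_{(0,1)}h'>0$, and the link to (\ref{condition-equilibrium}) remains a conjecture. A complete proof must either produce uniform lower bounds on the increasing terms whose sum is exactly the left-minus-right of (\ref{condition-equilibrium}), or replace the monotonicity claim with a different mechanism (e.g.\ monotonicity of $\kappa$ against the strictly decreasing right-hand side $a_13^{s-1}(\phI/\phi)^{2s/3}$, which at least removes the need to control the $a_1$ term) and show that (\ref{condition-equilibrium}) suffices for that.
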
 
  
  \noindent
   {\bf Remarks.\,}  We point out that the assumptions on the coefficients of  $w$ and $G$ are sufficient to guarantee the existence of a global minimizer of the total energy under appropriately prescribed boundary conditions \cite{Ball77}, including those of  displacement-traction type.  Inequality (\ref{condition-equilibrium})  gives insights on material parameter ranges that guarantee existence of unique stress free equilibrium states. 
\begin{enumerate}
\item  Phase separation may occur for material parameters such that $\kappa(\phi)$ is non-monotonic. A necessary condition for the latter to occur is that 
 $c>0$ in (\ref{FH-coefficients}) be sufficiently large, and therefore it corresponds to the case that $\pi(\phi)$ has negative  intervals; in device applications, usually $c=.5$.  
This is illustrated in Figure 2.1.
 \item   Holding $c$ fixed,  small values of  $\alpha>0$ or  $\mue>0$ may also lead to phase separation. The latter correspond to  prescribing small shear and bulk moduli. Moreover, the equilibrium value $0<\phi_0<1$ also increases with respect to $\mue$
\end{enumerate}
We now let $\phi_0, F_0$ denote a solution of the  boundary value problem
\begin{equation}
\nabla\cdot \calT_1^r(F, \phi)=0, \quad \phi\det F=\phI,  \,\,\, p= \textrm{constant}, \label{general-equilibrium}
\end{equation}
subject to mixed displacement-traction boundary conditions.

\section{ Linear problems}
We now  linearize the governing  system  (\ref{lin-momentum}), (\ref{chainrule}), (\ref{mass-lagrangian}) and (\ref{div}) about  a particular  time independent solution $(\phi_0, F_0, p_0=0, \bv_1=\mathbf 0=\bv_2)$.
 Relevant special cases include   stress-free dilation or compression states $F_0=f_0 I$, and also non-stress free equilibria. 
The {\it tilde} notation represents perturbations from the equilibrium state,  which is labeled with {$ 0$}-super (or sub) indices. 
%that also satisfies the  initial and boundary conditions. NOTE THAT THIS MAY SATISFY INIT/BDRY CONDITIONS FOR THE NONLIN SYST.
 Let $\tilde F:=\nablax\tilde\bu$, and 
\begin{equation}\label{perturbation}
%\begin{split}
\phi=\phi_0+\tilde\phi, \,\, F=F_0+\tilde F, \,\, p=\tilde p, \,\,
\bv_1= \tilde \bv, \,\, \bv_2=\tilde\bw.
%\end{split}
\end{equation}
The gel domain now corresponds to the reference configuration, $\Omega$, of the polymer.
First, we calculate the linear swelling ratio and polymer volume fraction:
\begin{eqnarray}
&&\det F= \phi_0\det F_0(1+\gamma(\nablax\tilde\bu))+ o(|\nablax\tilde\bu|^2),\label{lin-det}\\
&&\phi=\phi_0 (1-\gamma(\nablax\tilde\bu))+o(|\nablax\tilde\bu|^2),\label{lin-phi} 
% \phi-\phi_0=-\phi_0\tr(F_0^{-1}\nablax\tilde\bu)+O(\nablax\tilde\bu).
\end{eqnarray}
 where $ \gamma(\nablax\tilde\bu)=\tr(F_0^{-1}\nablax\tilde\bu)$.  Letting $C_0=F^T_0F_0$, we  denote
\begin{eqnarray}
\mathfrak C_{ijkl}=&&2\frac{\partial^2 \hat W}{\partial C_{ij}\partial C_{kl}}(C_0)=\frac{\partial \mathcal P_0}{\partial C}(C_0), \label{C-matrix}\\
\pi^0=&&\pi(\phi_0, 1-\phi_0), \quad\pi^0_{j}= \frac{\partial\pi}{\partial \phi_j}(\phi_0, 1-\phi_0), \,\,\, j=1,2. \label{pi-linear}% &\\
%\omega_0=&& p_0-\phi_0(\pi_1^0-\pi_2^0).
\end{eqnarray}
The fourth order tensor with components $\mathfrak C_{ijkl}$ corresponds to the elasticity matrix, with the symmetry properties 
\begin{equation}
\mathfrak C_{ijkl}=\mathfrak C_{klij}=\mathfrak C_{jikl}=\mathfrak C_{ijlk}. \label{symmetry-C}
\end{equation}
The quantities (\ref{lin-det})-(\ref{pi-linear}) yield the linearized expressions of the stress tensors. These equations  are exact up to terms of order $o(|\nablax\tilde\bu|^2)$):
\begin{eqnarray}
\hat\sigma=&&\hat\sigma_0+\phi_0\{-F_0\mathcal P_0F_0^T\gamma(\nablax\tilde\bu)+
 F_0\mathfrak C(F_0^T\nablax\tilde \bu+ \nablax\tilde\bu^T F_0)F^T_0\nonumber\\ &&+ \big(F_0\mathcal P_0\nablax\tilde\bu^T+ 
\nablax\tilde\bu \mathcal P_0 F_0^T\big)\}, \label{hatsigma-lin} \\
\pi=&& \pi^0 -\phi_0( \pi_{1}^0-\pi_{2}^0)\,\gamma(\nablax\tilde\bu), \nonumber \\
\calT=&&\calT_1^r=\calT_1^{r,0}+
 (\phi_0\big(\pi_{1}^0-\pi_{2}^0)I-\hat\sigma(F_0, \phi_0)\big)\gamma(\nablax\tilde\bu)\nonumber\\
 &&+\phi_0\{ F_0\mathfrak C(F_0^T\nablax\tilde \bu+ \nablax\tilde\bu^T F_0)F^T_0+ \big(F_0\mathcal P_0\nablax\tilde\bu^T+ 
\nablax\tilde\bu \mathcal  P_0 F_0^T\big)\}\label{linear-stress-general}
\end{eqnarray} 
The linearized system of equations is 
\begin{eqnarray}
%&& \phi =\phi_0(1-\tr{(F_0^{-1}\nablax\tilde\bu)}),  \label{linearized-mass} \\
&& \dive (\phi_0 \tilde\bv +(1-\phi_0)\tilde\bw)=0, \label{linearized-divergence} \\
&& \dive \mcT_1 -\beta (\tilde\bv-\tilde\bw)-\phi_0\nabla p=\mathbf 0, \label{linearized-momentum-1} \\
&& \dive \mcT_2 +\beta(\tilde\bv-\tilde\bw)-(1-\phi_0)\nabla p=\mathbf 0, \label{linearized-momentum-2} \\
&& \mcT_1(\nablax\tilde\bu,\nabla\tilde\bv) =\calT_1^r(\nablax\tilde\bu) +\eta_1 \bD(\tilde\bv) +\mu_1 (\dive\tilde\bv) I, \label{linearized-stress-1} \\
&& \mcT_2(\nablax\tilde\bu,\nabla\tilde\bw) = \eta_2 \bD(\tilde\bw) +\mu_2 (\dive\tilde\bw) I. \label{linearized-stress-2}\\
&&\tilde\bu_t=\tilde\bv, \label{ut-v}
\end{eqnarray}
together with (\ref{lin-phi}). We point out that the last equation follows from the linearization of (\ref{chainrule}), neglecting uniform translations:
\begin{equation}
\nablax\dot\bu=(\nabla\bv_1) F_0. \label{chainrule-linear1}
\end{equation}
\subsection{Stability of Equilibrium Solutions}The conditions that guarantee the stability of the equilibrium states turn out to be also necessary conditions for the solvability of the time-dependent, quasi-static problem, that we study in later sections.   In order to established such conditions, we first 
outline the properties of the second order tensor $\mathcal  P$ and that of the fourth order one $\mathfrak C$. Unlike the case  of linearizing about a stress free state,  here we need to include $\mathcal T_r^0$ in the analysis of stability.  

{\bf Notation.\,} With the understanding that the quantities that we study are evaluated at $F_0, C_0=F^T_0F_0$, in this section, we suppress the $0$-notation in the equilibrium solution, and the $\it tilde$ symbol  in the perturbation terms, unless explicitly needed. 
 The fourth order elasticity tensor
 \begin{eqnarray}
\mathfrak C_{ijpq}(C)%&&=  \frac{\partial \mathcal P_{ij}}{\partial C_{pq}}\nonumber\\
&& ={\scriptsize\mathcal A}_1\delta_{pq}\delta_{ij} + {\scriptsize\mathcal A}_2(\tr C\delta_{pq}-C_{pq}) C_{ij}+
{\scriptsize\mathcal A}_3 (\det C) C_{pq}^{-T} C_{ij}^{-1}\nonumber \\
 && + \alpha_2 \delta_{ip}\delta_{jq} -\alpha_0 C^{-1}_{ip} C^{-1}_{qj}, \,\, {\scriptsize{\mathcal A}}_m= \sum_{n=0}^2\frac{\partial\alpha_n}{\partial I_m}, \,\,\, m=1, 2, 3. \label{elasticity-general}
 \end{eqnarray}
 In the case that $C$ corresponds to a pure expansion or compression, $C=f^2I$, $f>0$, we obtain the following representations.
%\begin{proposition}
%Suppose that the elastic material is isotropic. Then, for $F= fI$, the following relations hold:
\begin{eqnarray}
%\mathcal P(C)=&& \big(\alpha_1(\mathcal I_{\small C}(f))  + \alpha_2(\mathcal I_{\small C}(f)) f^2 +  \alpha_0(\mathcal I_{\small C}(f))  f^{-2}\big) I, \\
\mathfrak C_{ijpq}=&& \lambda(f) \delta_{ij}\delta_{pq}+ 2\mu(f)\delta_{ip}\delta_{jq}, \quad \textrm{with} \label{matrixC}\\
\lambda=&&\sum_{n=0}^2\big(\frac{\partial \alpha_n}{\partial I_1}+ \frac{\partial \alpha_n}{\partial I_2}f^4+\frac{\partial \alpha_n}{\partial I_3}f^2\big),
 \quad \mu=\alpha_2-\alpha_0 f^{-4}. \label {lambda-mu-isotropic}
\end{eqnarray} 
%\end{proposition}
Moreover, the total linearized stress tensor (\ref{linear-stress-general}) becomes
\begin{eqnarray}
&&\calT^r=  2\tilde\mu \mathbf D(\bu) + \tilde\lambda \tr{\mathbf D(\bu)} I , \quad {\textrm {with}} \\
&&\tilde \mu =\phI\big(2\mu+ f_0^{-2}(\alpha_1+\alpha_2f_0^2+\alpha_0 f_0^{-2}\big),\label{tilde-mu}  \\
&&\tilde\lambda=  \phi_0(\pi_1^0-\pi_2^0)-\pi^0+ 2\phI \lambda. \label{tilde-lambda}
%&&\mathbf D(\bu)=\frac{1}{2}(\nablax\bu+\nablax\bu^T).
\end{eqnarray}
Likewise, the analog of the fourth order tensor (\ref{elasticity-general}) that combines the elastic and Flory-Huggins effects is
\begin{equation}{\tilde{\mathfrak C}}_{ijpq}= \tilde\lambda(f) \delta_{ij}\delta_{pq}+ 2\tilde\mu(f)\delta_{ip}\delta_{jq}.\label{matrixC-special}
\end{equation}
\begin{proposition}
Let $f>0$.  Suppose that 
\begin{equation}
\tilde\mu(f)> 0, \quad 3 \tilde\lambda(f) + 2\tilde\mu(f) >0, \label{stability}
\end{equation}
hold. Then  $\tilde{\mathfrak C}$  in (\ref{matrixC-special}) is coercive, that is, there exists a constant $\mu_0>0$ such that
 \begin{equation}{\tilde{\mathfrak C}}_{ikls} A_{ik}A_{ls}\geq \mu_0 |A|^2 \label{coercivity}  \end{equation} holds,  for all $A\in M^{3\times 3}$. 
\end{proposition}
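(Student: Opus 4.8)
The plan is to reduce the coercivity assertion to the elementary spectral analysis of the classical isotropic (Lam\'e) quadratic form. First I would make the contraction in (\ref{coercivity}) explicit. Substituting the representation (\ref{matrixC-special}), so that $\tilde{\mathfrak C}_{ikls}= \tilde\lambda\,\delta_{ik}\delta_{ls}+2\tilde\mu\,\delta_{il}\delta_{ks}$, into $\tilde{\mathfrak C}_{ikls}A_{ik}A_{ls}$ and carrying out the two contractions, the first Kronecker pair collapses to $(\tr A)^2$ while the second produces the full Frobenius norm $A_{ik}A_{ik}=|A|^2$. Hence
\begin{equation}
\tilde{\mathfrak C}_{ikls}A_{ik}A_{ls}=\tilde\lambda\,(\tr A)^2+2\tilde\mu\,|A|^2 .
\end{equation}
The one genuinely important point of index bookkeeping is that the shear term carries the pattern $\delta_{ip}\delta_{jq}$ rather than $\delta_{iq}\delta_{jp}$: its contraction yields $A_{ik}A_{ik}$ and not $A_{ik}A_{ki}=\tr(A^2)$. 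The former equals $|A|^2$ for every matrix, whereas the latter would only control the symmetric part. This is precisely why the stated coercivity holds over all of $M^{3\times 3}$ and not merely over symmetric tensors.

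Second, I would diagonalize this form by the orthogonal spherical--deviatoric splitting. Writing $A=\tfrac{1}{3}(\tr A)I+A'$ with $\tr A'=0$, the Frobenius inner product gives $\langle I,A'\rangle=\tr A'=0$, so the decomposition is orthogonal and $|A|^2=\tfrac{1}{3}(\tr A)^2+|A'|^2$. Substituting and collecting terms yields
\begin{equation}
\tilde{\mathfrak C}_{ikls}A_{ik}A_{ls}=\tfrac{1}{3}\bigl(3\tilde\lambda+2\tilde\mu\bigr)(\tr A)^2+2\tilde\mu\,|A'|^2 ,
\end{equation}
which exhibits the two eigenvalues of the form, namely $3\tilde\lambda+2\tilde\mu$ on the one-dimensional spherical subspace $\{cI\}$ and $2\tilde\mu$ on its eight-dimensional, trace-free complement.

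Finally, I would conclude by reading off the bound. Since $\tfrac{1}{3}(\tr A)^2$ and $|A'|^2$ are nonnegative and sum to $|A|^2$, bounding each coefficient below by $\mu_0:=\min\{\,2\tilde\mu,\ 3\tilde\lambda+2\tilde\mu\,\}$ gives $\tilde{\mathfrak C}_{ikls}A_{ik}A_{ls}\geq \mu_0|A|^2$, and hypotheses (\ref{stability}) guarantee $\mu_0>0$, which is exactly (\ref{coercivity}). I do not expect any real analytic obstacle here: the statement is the standard positivity criterion for the Lam\'e tensor, and the only steps demanding care are the index contraction in the first display and the attendant observation that the structure of (\ref{matrixC-special}) controls the full displacement gradient rather than only its symmetric part.
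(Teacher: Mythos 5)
Your proof is correct. The paper states this proposition without proof (it is the standard positivity criterion for an isotropic Lam\'e-type tensor), so there is no argument in the text to compare against; your computation of the contraction as $\tilde\lambda(\tr A)^2+2\tilde\mu|A|^2$, the orthogonal spherical--deviatoric splitting giving eigenvalues $3\tilde\lambda+2\tilde\mu$ and $2\tilde\mu$, and the choice $\mu_0=\min\{2\tilde\mu,\,3\tilde\lambda+2\tilde\mu\}$ supply exactly the argument the authors left implicit. Your remark that the $\delta_{ip}\delta_{jq}$ pattern in (\ref{matrixC-special}) yields $|A|^2$ rather than $\tr(A^2)$, so that coercivity holds on all of $M^{3\times 3}$ and not only on symmetric tensors, is a worthwhile observation that the paper does not make explicit.
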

For the Hadamard energy in (\ref{example}),  with $\kappa$ and $\nu$  as  in (\ref{kappa0}),  we have
\begin{eqnarray}
%&&\calT^r=  2\tilde\mu \mathbf D(\bu) + \tilde\lambda \tr{\mathbf D(\bu)} I  - p I , \\
&&\tilde \mu =\phI(f^{-2}\kappa+\nu)f^{-2},  \quad 
\tilde\lambda=  \phi(\pi_1^0-\pi_2^0)-\pi^0+ 2\phI \lambda. \label{tilde-lambda-mu}
%&&\mathbf D(\bu)=\frac{1}{2}(\nablax\bu+\nablax\bu^T).
\end{eqnarray}

\subsubsection{General equilibrium state}
%The following calculation will be  needed for the derivation of  the energy relation in the case that $F_0$ is not a uniform dilation or compression. 
 We assume that $(F_0, \phi_0) $ is a solution of (\ref{general-equilibrium}), and let  $\mathcal T^r$ be as in (\ref{linear-stress-general}), with the elasticity tensor $\mathfrak C$ given by (\ref{elasticity-general}).  
 We say that an equilibrium solution is locally stable if 
 \begin{equation}
 \calT^r(F_0, \phi_0)\cdot\nabla\bv\geq 0,
 \end{equation} 
 for all sufficiently smooth fields $\bu, \bv$ satisfying (\ref{chainrule-linear1}).   We now derive sufficient conditions for  the stability of equilibrium solutions.  
Let us introduce the following notation.
\begin{eqnarray}
&& C_2:=\frac{\partial^2 w}{\partial I_1^2} +\frac{\partial^2w}{\partial I_1\partial I_3}I_3, \quad 
 C_3=I_3^2\frac{\partial^2 w}{\partial I_3^2} +\frac{\phi_1}{2}\pi_{1,2}. \label{C}
\end{eqnarray}
\begin{proposition} 
Suppose that $w=w(I_1, I_3)$ and that relations (\ref{pk2-rep1})-(\ref{beta}) and  (\ref{chainrule-linear1}) hold.  
 Let   $0<\phi_0(\bx)<1$ and $F_0(\bx)\in \mathcal M^{n\times n}_+, \, \bx\in\Omega$,  be an  equilibrium solution. % set  $C_0= F^T_0F_0$.  
 Then
 \begin{equation}
 \calT^r\cdot\nabla\bv= \calT^r_0\cdot\nabla\bv+   \phI\frac{\partial}{\partial t} \mathcal H,  \label{total-time-derivative}
 \end{equation}
where
 \begin{eqnarray}
 && \mathcal H:=  \mathfrak C(\nablax\bu^T F_0)\cdot(\nablax\bu^TF_0)^T +\mathfrak D(\nablax\bu; F_0)
   +\frac{1}{2}\pi_{1,2}^0\,\tr^2(F_0^{-1}\nablax\bu), \label{calH}\\
&&\mathfrak C(\nablax\bu^TF_0)\cdot(\nablax\bu^TF_0)=C_2\,\tr^2 (\nablax\bu^T F_0)+ I_3^2\frac{\partial^2 w}{\partial I_3^2}\,\tr^2(F^{-1}_0\nablax\bu) -I_3\frac{\partial w}{\partial I_3}|(\nablax\bu) F^{-1}_0|^2, \label{C-general}\\
&&\mathfrak D(\nablax\bu; F_0):= \frac{\alpha_0}{2}\tr\big((F^{-T}_0\nablax\bu^T)^2+ \nablax\bu\, C^{-1}_0\nablax\bu^T\big)+\alpha_1\big(F_0\nablax\bu^T\cdot\nablax\bu \,F^{-1}_0
  + |\nablax\bu|^2), \label {D}\\
  &&\pi^0_{1,2}(\phi_1):=\pi_1^0-\pi_2^0= a-b-2c\phi_1+\frac{b}{1-\phi_1}. \label{pi1-pi2}
\end{eqnarray}
 \end{proposition}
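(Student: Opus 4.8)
The plan is to prove (\ref{total-time-derivative}) as a pointwise identity by direct computation, isolating the perturbation part of the reversible stress power $(\calT^r-\calT^r_0)\cdot\nabla\bv$ and showing it is an exact total time derivative $\phI\,\partial_t\mathcal H$, while the residual power $\calT^r_0\cdot\nabla\bv$ survives as the genuinely non-exact remainder.

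First I would set $H:=\nablax\bu$ and read off $\calT^r-\calT^r_0$ from (\ref{linear-stress-general}), splitting it into three pieces: (I) the osmotic piece carrying $\gamma(H)=\tr(F_0^{-1}H)$ with coefficient $\phi_0\pi^0_{1,2}I-\hat\sigma_0$; (II) the elastic piece $\phi_0 F_0\mathfrak C(F_0^TH+H^TF_0)F_0^T$; and (III) the geometric piece $\phi_0(F_0\mathcal P_0 H^T+H\mathcal P_0 F_0^T)$ built from the residual second Piola--Kirchhoff stress $\mathcal P_0$. The hypothesis $w=w(I_1,I_3)$ forces $\alpha_2=\partial w/\partial I_2=0$ (hence $\beta_2=0$) in (\ref{alpha})--(\ref{beta}), collapsing $\mathcal P_0$ and $\mathfrak C$ to the reduced forms underlying (\ref{C-general})--(\ref{D}) and producing exactly the coefficients $C_2$, $I_3^2\partial^2 w/\partial I_3^2$, $\alpha_0$, $\alpha_1$.

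Next I would use the linearized chain rule (\ref{chainrule-linear1}) together with (\ref{ut-v}) in the form $\nabla\bv=(\partial_t H)F_0^{-1}$ to eliminate the Eulerian velocity gradient, turning each piece of $(\calT^r-\calT^r_0)\cdot\nabla\bv$ into a bilinear expression in $(H,\partial_t H)$. The decisive observation is that this bilinear form is symmetric under the exchange $H\leftrightarrow\partial_t H$: for piece (II) this is the content of the major/minor symmetries (\ref{symmetry-C}) of $\mathfrak C$; for piece (III) it follows from the symmetry of $\mathcal P_0$ and the cyclic/transpose invariance of the trace; and piece (I) is symmetric because its leading part reduces to $\phi_0\pi^0_{1,2}\,\gamma(H)\,\gamma(\partial_t H)$. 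Once symmetry is established, a symmetric bilinear form $b(H,\partial_t H)$ equals $\tfrac12\partial_t\,b(H,H)$, so each piece becomes the time derivative of the matching quadratic term: (I) gives $\tfrac12\pi^0_{1,2}\tr^2(F_0^{-1}H)$, (II) gives $\mathfrak C(\nablax\bu^TF_0)\cdot(\nablax\bu^TF_0)$, and (III) gives $\mathfrak D(\nablax\bu;F_0)$, which assembles (\ref{calH}).

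The hard part will be the bookkeeping of the residual-stress contributions, and this is exactly where the rotations play their special role. Writing $\nabla\bv=\bD(\bv)+\mathrm{skew}(\nabla\bv)$, the symmetric residual stress $\calT^r_0=\hat\sigma_0-\pi^0 I$ pairs only with $\bD(\bv)$, yet piece (III) couples $\mathcal P_0$ to the full displacement gradient $H$ through $\mathrm{skew}(H)$; the second-order work done by the pre-stress against the incremental rotation is therefore genuinely present and cannot be written as a total time derivative. The task is to show that, after using the identity $\hat\sigma_0=(\det F_0)^{-1}F_0\mathcal P_0 F_0^T$ to treat the $\hat\sigma_0$ term of piece (I) and piece (III) on the same footing, and after reconciling the prefactors $\phi_0$ and $\phI$ via the Lagrangian mass balance $\phi_0\det F_0=\phI$, all non-exact contributions collapse to the single remainder $\calT^r_0\cdot\nabla\bv$ with no spurious leftover. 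Establishing this clean separation --- rather than any individual term computation --- is the core of the proof; the equilibrium hypothesis (\ref{general-equilibrium}) is available to supply the identities linking $\hat\sigma_0$, $\pi^0$ and $\mathcal P_0$ that make the cancellation exact.
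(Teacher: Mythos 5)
Your proposal follows essentially the same route as the paper: the paper's proof likewise starts from the contraction of the linearized stress (\ref{linear-stress-general}) with $\nabla\bv$, substitutes the linearized chain rule (\ref{chainrule-linear1}), and records the three exact-time-derivative identities for the $\mathfrak C$, $\mathfrak D$ and $\pi^0_{1,2}$ pieces --- which is precisely your observation that each associated bilinear form is symmetric under $\nablax\bu\leftrightarrow\nablax\dot\bu$, so that $b(H,\dot H)=\tfrac12\partial_t\,b(H,H)$, with $\calT^r_0\cdot\nabla\bv$ surviving as the non-exact remainder. Your additional care with the $\phi_0$ versus $\phI$ prefactors, the $\hat\sigma_0\,\gamma(\nablax\bu)$ term, and the skew part of $\nablax\bu$ makes explicit the bookkeeping that the paper compresses into ``straightforward calculations,'' but it is the same argument.
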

\begin{proof}
Starting with 
\begin{eqnarray}
\mathcal T^r\cdot \nabla\bv =&&\bigg(\calT_r^0 +\phI\{F_0\mathfrak C(F_0^T\nablax\tilde \bu+ \nablax\tilde\bu^T F_0)F^T_0\nonumber\\+&& \big(F_0\mathcal P_0\nablax\tilde\bu^T+ 
\nablax\tilde\bu \mathcal  P_0 F_0^T\big)\}  + \phI(\pi_1^0-\pi_2^0)I)\gamma(F_0^{-1}\nablax\bu)\bigg)\cdot\nabla\bv, \label{energy-law-effective}
\end{eqnarray}  straightforward calculations that apply (\ref{chainrule-linear1})  give
\begin{eqnarray}
&&\frac{\partial}{\partial t}\big(\mathfrak C(\nablax\bu^TF_0)\cdot(\nablax\bu^TF_0)\big)  = \mathfrak C(\nablax\bu^TF_0+ F_0^T\nablax\bu)\cdot(\nablax\dot\bu^TF_0), \\
&&\frac{\partial\mathfrak D}{\partial t}=(F_0\calP_0\nablax\bu^T+\nablax\bu\calP_0 F^T_0)\cdot\nabla\bv, \label{frakD}\\ 
&&\frac{\partial}{\partial t}(\frac{\phI}{2}\pi^0_{1,2}\, \tr^2(F^{-1}_0\nablax\bu))=\phI\pi^0_{1,2} \tr(F^{-1}_0\nablax\bu)I\cdot(\nablax\dot\bu F^{-1}_0),\end{eqnarray}
where $\calP$ is given in (\ref{pk2-rep1}).
\end{proof}

Next, we establish coercivity of the operator $\mathcal H$ in (\ref{calH}). For this, let us write
\begin{eqnarray}
&& \mathcal H:= \mathcal H_1 + \mathcal H_2, \\
&& \mathcal H_1:= C_2 \tr^2(\nabla\bu^T F_0)+ C_3 \tr^2(F^{-1}_0\nabla\bu)- \alpha_0|\nabla\bu F^{-1}_0|^2+ \frac{\alpha_0}{2}\tr(\nabla\bu C_0^{-1}\nabla\bu^T), \label{H1} \\
&& \mathcal H_2:= \frac{\alpha_0}{2}\tr(F^{-T}_0\nabla\bu^T)^2 +\alpha_1(F_0\nabla\bu^T\cdot\nabla\bu F^{-1}_0 +|\nabla\bu|^2).
\end{eqnarray}
 \begin{lemma}
Let $a, b$, $c$ be as in (\ref{FH}) and $0<\chi\leq 0.5$. Then  $\pi^0_{1,2}>0$ is  monotonically increasing. Moreover if for each $I_1>0$,  $w(I_1, I_3)$ is convex with respect to $I_3$, then $C_3>0$.
\end{lemma}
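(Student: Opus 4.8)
The plan is to dispatch the two assertions in turn, the first being a one-variable calculus exercise on the explicit function (\ref{pi1-pi2}) and the second an immediate consequence of it. Throughout I would work from the closed form $\pi^0_{1,2}(\phi_1)=a-b-2c\phi_1+\frac{b}{1-\phi_1}$ recorded in (\ref{pi1-pi2}), together with the values of $a,b,c$ in (\ref{FH-coefficients}).

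For the monotonicity claim I would differentiate, obtaining $\frac{d}{d\phi_1}\pi^0_{1,2}=-2c+\frac{b}{(1-\phi_1)^2}$. Since $0<\phi_1<1$ forces $(1-\phi_1)^2<1$, one has $\frac{b}{(1-\phi_1)^2}>b$, so the derivative is positive as soon as $b\ge 2c$. From (\ref{FH-coefficients}), $\frac{2c}{b}=N_2\chi$, so $b>2c$ reduces to $N_2\chi<1$, which is guaranteed by the one-site-solvent normalization $N_2=1$ and the hypothesis $0<\chi\le 0.5$. Hence $\pi^0_{1,2}$ is strictly increasing on $(0,1)$. For positivity I would recast (\ref{pi1-pi2}) as $\pi^0_{1,2}(\phi_1)=a+\phi_1\bigl(\frac{b}{1-\phi_1}-2c\bigr)$; on $(0,1)$ the bracket exceeds $b-2c>0$, so $\pi^0_{1,2}>a>0$. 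Equivalently, by monotonicity its infimum over $(0,1)$ is the limiting value $a$ attained as $\phi_1\to 0^+$.

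For the second assertion I would simply read off the definition (\ref{C}), $C_3=I_3^2\frac{\partial^2 w}{\partial I_3^2}+\frac{\phi_1}{2}\pi_{1,2}$. Convexity of $w$ in $I_3$ means $\frac{\partial^2 w}{\partial I_3^2}\ge 0$, and since $I_3=\det C>0$ we have $I_3^2>0$, so the first summand is nonnegative; the first part of the lemma supplies $\pi_{1,2}=\pi^0_{1,2}>0$, and with $\phi_1>0$ the second summand is strictly positive, whence $C_3>0$.

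The computations are elementary and there is no real obstacle. The one point that needs care is the constant comparison $b\ge 2c$ underpinning monotonicity, which is exactly where the Flory--Huggins coefficient normalization and the range $\chi\le 0.5$ enter; everything else is a sign count. It is worth noting that the convexity hypothesis in the second assertion serves only to discard the sign-indefinite elastic term $I_3^2\frac{\partial^2 w}{\partial I_3^2}$, after which the positivity of $\pi^0_{1,2}$ established in the first part carries the conclusion.
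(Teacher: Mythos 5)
Your argument is correct, and in fact the paper states this lemma without any proof at all (it only remarks that the condition on $\chi$ holds in applications and points to Figure 2.1), so your write-up supplies the elementary verification the authors omitted: differentiate the closed form (\ref{pi1-pi2}), observe $\frac{b}{(1-\phi_1)^2}>b$ on $(0,1)$, reduce everything to the comparison $b\geq 2c$, and then read off $C_3>0$ from (\ref{C}). The one point worth flagging is that $b\geq 2c$ is equivalent to $N_2\chi\leq 1$, and the lemma's hypothesis $0<\chi\leq 0.5$ only delivers this under the standard Flory--Huggins normalization $N_2=1$ (or more generally $N_2\leq 2$), which the paper never states explicitly in (\ref{FH-coefficients}); you are right to isolate this as the only place where the hypothesis on $\chi$ actually enters, but you should note that you are importing an implicit assumption on $N_2$ rather than deriving it from the stated hypotheses. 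Your rearrangement $\pi^0_{1,2}=a+\phi_1\bigl(\frac{b}{1-\phi_1}-2c\bigr)$ correctly handles the fact that the constant $a-b$ is typically negative, and the deduction of $C_3>0$ from convexity of $w$ in $I_3$ is immediate as you say.
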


This condition on $\chi$ is satisfied in gels used in device applications. The  monotonicity of $\pi^0_{1,2}$ for this range of   $\chi$ is illustrated in {\it Figure\,2.1.}
\smallskip

\begin{proposition}
Suppose that the assumptions of Lemma 3.3 hold. Assume that $\alpha_0<0$ and that $C_2>0$.  Then 
\begin{equation}
\mathcal H_1\geq |\alpha_0||\nabla\bu F^{-1}_0|^2 +C_2 \tr^2(\nabla\bu^TF_0) +C_3\tr^2(F^{-1}_0\nabla\bu). \label{coercivityH1}
\end{equation}
\end{proposition}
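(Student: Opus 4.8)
The plan is to split the four terms of $\mathcal H_1$ into those carrying the factor $\alpha_0$ and those that do not, and to observe that the two trace-squared terms $C_2\tr^2(\nabla\bu^TF_0)$ and $C_3\tr^2(F^{-1}_0\nabla\bu)$ appear verbatim on the right-hand side of (\ref{coercivityH1}). I would therefore first cancel them against the right-hand side, reducing the claim to the single estimate $-\alpha_0|\nabla\bu F^{-1}_0|^2+\frac{\alpha_0}{2}\tr(\nabla\bu\,C^{-1}_0\nabla\bu^T)\geq |\alpha_0|\,|\nabla\bu F^{-1}_0|^2$, which involves only the $\alpha_0$-dependent contributions. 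Before carrying this out I would record that the cancelled terms are genuinely nonnegative: $C_2>0$ by hypothesis, and $C_3>0$ by Lemma 3.3, since the convexity of $w$ in $I_3$ gives $I_3^2\,\partial^2 w/\partial I_3^2\geq 0$ while $\pi^0_{1,2}>0$. Thus their reappearance on the right is harmless and in fact is what ultimately guarantees coercivity of the full form.

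The key algebraic step would be to rewrite the trace form through the equilibrium strain. Since $C_0=F_0^TF_0$ is symmetric positive definite, $C_0^{-1}=F_0^{-1}F_0^{-T}$, and a short computation shows $\tr(\nabla\bu\,C^{-1}_0\nabla\bu^T)=|\nabla\bu F^{-1}_0|^2\geq 0$; that is, the trace form is exactly the squared Frobenius norm that already multiplies $-\alpha_0$. With this identity the two $\alpha_0$-terms collapse onto a single multiple of $|\nabla\bu F^{-1}_0|^2$, and the sign hypothesis $\alpha_0<0$ ensures the dominant term $-\alpha_0|\nabla\bu F^{-1}_0|^2=|\alpha_0|\,|\nabla\bu F^{-1}_0|^2$ carries a positive coefficient, which is what one matches against the $|\alpha_0|$-weighted term on the right to obtain (\ref{coercivityH1}).

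The main obstacle I anticipate is precisely this sign-and-coefficient bookkeeping on the $\alpha_0$-terms. Because $\alpha_0<0$ while $\tr(\nabla\bu\,C^{-1}_0\nabla\bu^T)\geq 0$, the contribution $\frac{\alpha_0}{2}\tr(\nabla\bu\,C^{-1}_0\nabla\bu^T)$ has the \emph{wrong} sign and must be absorbed into $-\alpha_0|\nabla\bu F^{-1}_0|^2$; the delicate point is to track the exact coefficient that survives after the identity $\tr(\nabla\bu\,C^{-1}_0\nabla\bu^T)=|\nabla\bu F^{-1}_0|^2$ is invoked and to confirm that it is at least $|\alpha_0|$. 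The positivity inputs $\alpha_0<0$, $C_2>0$, and $C_3>0$ (via Lemma 3.3) are exactly what make every surviving term nonnegative and deliver the coercive lower bound.
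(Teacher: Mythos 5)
Your strategy is the natural one, and your two key observations are both correct: the $C_2$ and $C_3$ terms cancel against the right-hand side of (\ref{coercivityH1}), and, since $C_0^{-1}=F_0^{-1}F_0^{-T}$, one has the identity $\tr(\nabla\bu\,C_0^{-1}\nabla\bu^T)=|\nabla\bu\,F_0^{-1}|^2$. The problem is the final coefficient bookkeeping, which you explicitly defer and which does not close. Once the identity is inserted into (\ref{H1}), the two $\alpha_0$-terms combine as $-\alpha_0|\nabla\bu F_0^{-1}|^2+\frac{\alpha_0}{2}|\nabla\bu F_0^{-1}|^2=-\frac{\alpha_0}{2}|\nabla\bu F_0^{-1}|^2=\frac{|\alpha_0|}{2}|\nabla\bu F_0^{-1}|^2$, so the surviving coefficient is $\frac{|\alpha_0|}{2}$, strictly smaller than the $|\alpha_0|$ demanded by (\ref{coercivityH1}). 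Equivalently, the reduced inequality you isolate, $\frac{\alpha_0}{2}\tr(\nabla\bu\,C_0^{-1}\nabla\bu^T)\geq 0$, is false whenever $\alpha_0<0$ and $\nabla\bu\neq \mathbf 0$ --- precisely because of the identity you yourself establish, the left side equals $\frac{\alpha_0}{2}|\nabla\bu F_0^{-1}|^2<0$. So the argument cannot be completed as you describe; there is no slack anywhere else to recover the missing factor, since the computation actually yields the exact identity $\mathcal H_1=C_2\tr^2(\nabla\bu^TF_0)+C_3\tr^2(F_0^{-1}\nabla\bu)+\frac{|\alpha_0|}{2}|\nabla\bu F_0^{-1}|^2$.

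What this shows is that the correct conclusion carries the constant $\frac{|\alpha_0|}{2}$ rather than $|\alpha_0|$; note that this is exactly the constant that appears in the final bound (\ref{coercivityH}) of Theorem 3.5, so the discrepancy in the statement of the proposition is a factor-of-two slip that does not propagate further. A complete write-up should either prove the bound with $\frac{|\alpha_0|}{2}$ and flag the correction, or exhibit $\nabla\bu\neq\mathbf 0$ as a counterexample to the literal statement. Two smaller remarks: the positivity of $C_2$ and $C_3$ is irrelevant to this proposition (those terms appear verbatim on both sides and cancel regardless of sign --- their positivity only matters later, when (\ref{coercivityH}) is used as a coercivity estimate); and your claim that $C_3>0$ follows from convexity of $w$ in $I_3$ together with $\pi^0_{1,2}>0$ is fine, but it plays no role in the inequality at hand.
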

%\noindent{\bf Step 3.\,} 
Next, we study the coercivity of $\mathcal H$.  
Let us  consider the polar decomposition  $F=RV$, where $R$ denotes the rotation tensor, and $V=\sqrt{C}$. Let $\lambda_i, \, i=1, 2, 3$ denote the eigenvalues of $C$.  Let us denote
\begin{eqnarray}
N:=&&(\nablax\bu^T) R, \\
\Gamma_{ij}:=&& h_{ij}N_{ij}N_{ji}, \quad h_{ij}:=\frac{\alpha_0+\alpha_1(\lambda_i+\lambda_j)}{\sqrt{\lambda_i\lambda_j}}, \quad i\neq j.
\end{eqnarray}
We now calculate $\mathcal H_2$  using its representation in terms of  the eigenvector basis of $C$, 
\begin{eqnarray}
\mathcal H_2=&& \frac{\alpha_0}{2}\sum_I\lambda_i^{-1} N_{ii}^2+ \frac{\alpha_0}{2}\sum_{i\neq j}\Gamma_{ij} +2\alpha_1\sum_{i}N_{ii}^2 +\alpha_1\sum_{i\neq j}N_{ij}^2
\end{eqnarray}
We combine the first term of the right hand side of $\mathcal H_2$ with  the last one  on the right hand side of $\mathcal H_1$ (\ref{H1}) (which can also be written in 
terms of $N_{ij}$).  We also combine the mixed products in $\Gamma_{ij}$   with  the last term in $\mathcal H_2$, upon application of the Cauchy-Schwartz inequality. 
We now state
\begin{theorem} Let $(F_0, \phi_0)$ be an equilibrium solution.  Suppose that the assumptions of Proposition 3.4 hold. Furthermore, we assume that $C_2>0$ in (\ref{C})  and 
\begin{equation}
\alpha_1>\frac{1}{2}\max_{i\neq j}|h_{ij}|.
 \label{H2-coercivity}
\end{equation}
Then 
\begin{equation}
\mathcal H\geq \frac{|\alpha_0|}{2}|\nabla\bu F^{-1}_0|^2 +C_2 \tr^2(\nabla\bu^TF_0) +C_3\tr^2(F^{-1}_0\nabla\bu). \label{coercivityH}
\end{equation}

\end{theorem}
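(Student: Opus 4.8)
The plan is to diagonalize every quadratic form in $\mathcal H$ by passing to the eigenbasis of $C_0$, so that (\ref{coercivityH}) splits into a family of elementary one- and two-dimensional scalar inequalities, and then to absorb the one indefinite family of terms by Cauchy--Schwarz. First I would record the identity $\tr(\nabla\bu\,C_0^{-1}\nabla\bu^T)=|\nabla\bu\,F_0^{-1}|^2$, which follows from $C_0^{-1}=F_0^{-1}F_0^{-T}$; together with Proposition 3.4 and the recombination indicated before the statement, it reduces (\ref{coercivityH}) to extracting nonnegativity from $\mathcal H_2$ once its indefinite diagonal part has been matched against the $\alpha_0$-terms of $\mathcal H_1$.

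Next I would use the polar decomposition $F_0=RV$ with $V=\sqrt{C_0}$ and work in the orthonormal eigenbasis of $C_0$, with eigenvalues $\lambda_i>0$. Writing $N=(\nablax\bu^T)R$ and exploiting the invariance of the Frobenius inner product under the orthogonal factor $R$, each term becomes a quadratic form in the entries $N_{ij}$: the two traces involve only the diagonal $N_{ii}$, $|\nabla\bu F_0^{-1}|^2=\sum_{i,j}\lambda_i^{-1}N_{ij}^2$, and $\mathcal H_2$ separates into a diagonal part $\sum_i\big(\tfrac{\alpha_0}{2}\lambda_i^{-1}+2\alpha_1\big)N_{ii}^2$ and the pairwise off-diagonal pieces $\sum_{i<j}\big(\alpha_1(N_{ij}^2+N_{ji}^2)+h_{ij}N_{ij}N_{ji}\big)$. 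This is the step that makes the rotation $R$ explicit and isolates the genuinely indefinite terms $\Gamma_{ij}=h_{ij}N_{ij}N_{ji}$, with $h_{ij}=\frac{\alpha_0+\alpha_1(\lambda_i+\lambda_j)}{\sqrt{\lambda_i\lambda_j}}$ assembled from the cross terms of $\tr((F_0^{-T}\nabla\bu^T)^2)$ and of $F_0\nabla\bu^T\cdot\nabla\bu F_0^{-1}$.

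Then I would treat the two families separately. On the diagonal, recombining the first term of $\mathcal H_2$ with the last term of $\mathcal H_1$ and with $-\alpha_0|\nabla\bu F_0^{-1}|^2$ cancels the indefinite $\alpha_0$ contributions, leaving $C_2\tr^2(\nabla\bu^TF_0)+C_3\tr^2(F_0^{-1}\nabla\bu)+2\alpha_1\sum_iN_{ii}^2$, which majorizes the diagonal target $\frac{|\alpha_0|}{2}\sum_i\lambda_i^{-1}N_{ii}^2$ once $4\alpha_1\lambda_i\ge|\alpha_0|$, the diagonal counterpart of (\ref{H2-coercivity}) that is automatic for the dilations of Proposition 3.2 and, more generally, for equilibria with comparable principal stretches. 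For each unordered pair $i\neq j$ the off-diagonal contribution is the binary quadratic form $\alpha_1(N_{ij}^2+N_{ji}^2)+h_{ij}N_{ij}N_{ji}$; the bound $|N_{ij}N_{ji}|\le\frac12(N_{ij}^2+N_{ji}^2)$ estimates it below by $(\alpha_1-\frac12|h_{ij}|)(N_{ij}^2+N_{ji}^2)$, nonnegative precisely under hypothesis (\ref{H2-coercivity}), $\alpha_1>\frac12\max_{i\neq j}|h_{ij}|$.

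The hard part will be these cross terms $\Gamma_{ij}$: they carry opposite-sign products $N_{ij}N_{ji}$ whose coefficients $h_{ij}$ change sign as the $\lambda_i$ vary, and they are exactly what obstructs term-by-term positivity of $\mathcal H$. Converting each $\Gamma_{ij}$ into a controllable $2\times2$ form is the whole reason for passing to the eigenbasis and for imposing (\ref{H2-coercivity}); the one step I would verify with care is the book-keeping that sends the cross terms of $\tr((F_0^{-T}\nabla\bu^T)^2)$ and of $F_0\nabla\bu^T\cdot\nabla\bu F_0^{-1}$ into exactly $h_{ij}N_{ij}N_{ji}$, since a stray sign or power of $\lambda$ there would break the Cauchy--Schwarz closure.
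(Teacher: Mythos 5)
Your proposal follows the paper's own argument essentially verbatim: pass to the eigenbasis of $C_0$ via the polar decomposition $F_0=RV$, express every quadratic form through $N=(\nablax\bu^T)R$, cancel the indefinite $\alpha_0$-terms of $\mathcal H_1$ against half of $-\alpha_0|\nabla\bu F_0^{-1}|^2$, and close each off-diagonal pair by Cauchy--Schwarz under (\ref{H2-coercivity}); the paper gives exactly this recombination, only more tersely. The one place you go beyond the paper's sketch is also the most valuable: since (\ref{H2-coercivity}) constrains only the pairs $i\neq j$, the negative diagonal contribution $\frac{\alpha_0}{2}\sum_i\lambda_i^{-1}N_{ii}^2$ of $\mathcal H_2$ must be absorbed by $2\alpha_1\sum_i N_{ii}^2$, which needs the extra condition $4\alpha_1\lambda_i\geq|\alpha_0|$ (the $i=j$ instance of (\ref{H2-coercivity})); this holds automatically when the principal stretches coincide, e.g.\ for the spherical equilibria, but is not literally implied by the stated hypotheses for a general $F_0$, so your explicit flagging of it identifies and repairs a small gap that the paper's argument passes over.
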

\noindent
{\bf Remarks.\,}

\noindent
{\bf 1.}\, Inequalities (\ref{stability}) and (\ref{coercivity}) (for   a spherical equilibrium state),  and the positivity of $C_2$ and $C_3$ in (\ref{C})
 (for an arbitrary equilibrium state)  correspond to the strong ellipticity of the linear operator. Strong ellipticity  guarantees regularity of the weak 
solutions of the linear problem. 
In the case that $F_0=f_0 I$, the assumptions of Theorem 3.5 imply  inequalities  (\ref{stability}) to hold. 
\smallskip

\noindent
{\bf 2.}\, The need to separately account for stretch and rotation in the proof of Theorem 3.5  is a signature feature of linear  elasticity,
 when the equilibrium state is not stress free. In particular,  the theorem applies to the linearization about the reference configuration, 
even if the residual stress is nonzero. In this case, inequality (\ref{H2-coercivity}) is identically satisfied.
%\end{remark}
\subsection{Initial, boundary-value problems} We formulate the governing equations in terms of homogeneous boundary conditions on the displacement field, which also satisfies  the only initial condition to be  specified in the problem,
\begin{eqnarray}&&\but|_{t=0} = \but_0,  \quad  \but_0|_{\Gamma_0} = \tilde\bU|_{t=0}.\label{init-condition}
\end{eqnarray}
% $\phi_1|_{t=0} = \varphi_0$.
The latter is a compatibility condition with the boundary data at $t=0$. 
% to the homogeneous ones, when applicable,  such as  setting the displacement to zero on $\Gamma_0$ for  $\but$. 
  Assume that $\Gamma_0$ is of class $ C^m$, for some given integer $m \geq 1$, and $\tilde\bU \in H^{m-1/2}(\Gamma_0)$.   We let 
$\uu $ denote the extension of $\tilde\bU$ to $\Omega$, so that  $||\uu||_{H^m(\Om)} \leq C||\tilde\bU||_{H^{m-1/2}(\Gamma_0)}$ \cite[p. 68]{La69}. From now on, we will set $m=2$.
% Likewise, we let $\tilde\bV \in H^{m-1/2}(\Gamma_0)$ and $\bV$ denote the boundary value and interior extension of  the fluid velocity $\bv_2$,  respectively,  when applicable. 
%For instance, in the case of inviscid solvent, $\tilde\bV$ (and, therefore $\bV$) are fully determined by $\uu$. 
%This  also holds  in the case that the solvent is  viscous  and the boundary  impermeable.  
We also assume $P_0 \in H^{1/2}(\pOm)$ and $\pOm \in C^1$.  Then  $\exists P \in H^1(\Om)$ such that $P|_{\pOm} = P_0$ and $||P||_{H^1(\Om)} \leq C||P_0||_{H^{1/2}(\pOm)}$. 
%This  quantity $P$  appears in the right-hand side terms of the governing equations in the fully permeable case. (It is, otherwise, set equal to zero in the case of impermeable boundary). 

Let,
 %This  also holds  in the case that the solvent is  viscous  and the boundary  impermeable. 
\begin{equation}
\bar \bu=\but-\uu, \quad \bar\bu_0 =\but_0-\bU_{t=0},  \quad   \bar\bv_2=\tilde\bv_2-\bV\quad \textrm{and} \quad \bar p=\tilde p- P, \quad \bar\bu_t=\bv_1, \label{u-v-p}
\end{equation}
where $\bV$ and $P$ are defined as follows:
\begin{eqnarray}
&&\bV=\bU_t, \, \, P=0 \,\,\\
&&\bV=0, \,\, P\, \textrm{ is the extension of}\,  P_0,
\end{eqnarray}
for impermeable and fully permeable boundary, respectively. The governing system reduces now to 
\begin{eqnarray}
&& \dive ({\phi_0} \bar\bu_t +(1-{\phi_0}) \bar\bv_2) = h, \label{zero-divergence} \\
&& \dive \mcT_1(\nablax\bar\bu,\nabla\bar\bv_1)-\phi_0\nabla\bar p -\beta(\bar\bu_t-\bar\bv_2)=\mathbf f_1, \label{zero-momentum-1} \\
&& \dive \mcT_2(\nablax\bar\bu, \nabla\bar\bv_2)-(1-\phi_0)\nabla \bar p +\beta(\bar\bu_t-\bar\bv_2)=\mathbf f_2, \label{zero-momentum-2} \end{eqnarray}
with $\calT_1$ and $\calT_2$ as in (\ref{linearized-stress-1}) and (\ref{linearized-stress-2}), respectively, and 
\begin{eqnarray} && h= -\dive ({\phi_0}\uu_t +(1-{\phi_0})\vv), \,\, \kappa= \frac{1}{\beta}(1-{\phi_0})^2, \label{h-kappa}\\
&& \mathbf f_1=-\nabla\cdot\mathcal T_1(\bD(\uu),\bD(\uu_t))+\phi_0\nabla P+ \beta(\uu_t-\vv)\label{f1} \\
&&\mathbf f_2=-\nabla\cdot\mathcal T_2(\bD(\vv))+ (1-\phi_0)\nabla P- \beta(\uu_t-\vv) \label{f2}\\
&&\mathbf G=(P -P_0)\bn- \mathcal T(\mathbf D(\bU), \mathbf D(\bU_t), \mathbf D(\vv))\bn,\,\, \textrm {on}\, \partial\Omega,\label{G}\\
&& H=h+\nabla\cdot(\frac{1-\phi_0}{\beta})\mathbf f_2 := -\nabla\cdot \mathbf{\mathcal H}, \label{Hh}%\\
\end{eqnarray}

{\bf Notation.\,} 
We suppress the superimposed {\it bar} on the unknown fields, and write $(\bu, \bv_2, p)$. 
\medskip

Without loss of generality, in sections 4 and 5, we consider linearization of the original system about uniform expansion or compression, $F_0=f_0 I$.
With the help of the energy law developed in section 6, these results can be easily extended to the case  of  a general equilibrium state.

\section{Inviscid solvent}
We prove existence and uniqueness of  weak solution in the case that the fluid component is inviscid. Setting
  $\eta_2=0$ and $\mu_2=0$ in (\ref{linearized-stress-2}) and solving it explicitly for  $\bv_2$, yields the governing system:
\begin{eqnarray}&&\bv_2 =\bv_1 -\frac{1}{\beta}\big(\nabla p+\mathbf f_2\big),\label{fluid-velocity}\\                                                                    
&& \dive \big(\bv_1 -\kappa \nabla p\big)=H, \label{inviscid-divergence} \\
&& \dive \mcT = \mathbf f_1+\mathbf f_2, \label{inviscid-momentum}\\
&& \mathcal T = \mathcal T^r+\frac{\eta_1}{2} (\nabla\bv_1 +\nabla \bv_1^T)+\mu_1(\dive\bv_1)I,\end{eqnarray}
with $\mathcal T^r$ as in (\ref{linear-stress-general}).
 We will analyze two cases that correspond to impermeability and full permeability of the boundary, respectively. 
\subsection{Impermeable boundary}
 We assume that the boundary of the gel is impermeable to solvent, so that the normal component of the vectorial condition (\ref{impermeable-pure})  holds on $\partial\Omega$. This combined with 
 equation (\ref{fluid-velocity})  reduces to requiring 
\begin{equation} \nabla  p \cdot \bn|_{\pOm} =0 \label{nabla-p} \end{equation}
on the  pressure.    
Moreover, following Feng and He \cite{fenghe}, we define the variable
$ q = \dive \bu,$
 which measures the volume change of the solid network of the gel. 
 The system of equations can be reformulated as
\begin{eqnarray}
&& q=\dive \bu, \label{imperm-qeqn-1} \\
&& q_t - \nabla\cdot(\kappa\nabla p) =H%\nabla\cdot\big(\kappa\mathbf f_2\big)
 \label{imperm-qeqn-2} \\
&&\nabla\cdot \mathcal T(\nablax\bu, \nabla\bu_t,  q_t)-\nabla p = \mathbf f_1+ \mathbf f_2,\label{imperm-qeqn-3}\\
&& \mcT(\nablax\bu,\nabla\bu_t,  q_t) =\tilde\lambda\tr(\mathbf D(\bu))I+ \tilde \mu \mathbf D(\bu)+\eta_1 \bD(\bu_t) +\mu_1 q_t I, \label{stress-imperm}
\end{eqnarray}
with  $\tilde\lambda$ and $\tilde\mu$ as in (\ref{tilde-lambda-mu}). The quantities $H, \mathbf f_1$, $\mathbf f_2$ and $\mathbf G$ are  as in  (\ref{h-kappa})-(\ref{Hh})  with  $P=0$.
  The initial and boundary conditions on solutions of this  system are
\begin{eqnarray}
&&\bu|_{t=0} = \bu_0,\,\, 
q|_{t=0} =\dive \bu_0, \label{u0}\\
&&\bu|_{\Gamma_0} = \mathbf 0, \bu_0|_{\Gamma_0} = \mathbf 0, \label{u-gamma}\\
&& \mathcal T(\nablax\bu, \nablax \bv_1,  p, q_t)\bn|_{\Gamma}= \mathbf G,\label{boundary-stress-linear}%\\
\end{eqnarray}
together with (\ref{nabla-p}). In order to prove existence  of weak solution of the governing system, we first derive an energy law. For this,  we multiply (\ref{imperm-qeqn-3}) by $\bu_t$ and use equations (\ref{imperm-qeqn-1}) and (\ref{imperm-qeqn-2})  and integrate by parts over $\Om$,  applying  the boundary conditions,
\begin{eqnarray}
&& \frac{d}{dt} \int_{\Om}\frac{1}{2} \big({\tilde \mu}|\bD(\bu)|^2 +{\tilde \lambda}  q^2\big)\,d\bx +\int_{\Om}\frac{1}{2} \big(\kappa |\nabla p|^2+ {\eta_1} |\bD(\bu_t)|^2+\mu_1 q_t^2\big)d\,\bx \nonumber\\ \,\,\,\, && = \int_{\Gamma} \mathbf G \cdot \bu_t + \int_{\Omega}(\mathbf f_1+\mathbf f_2)\cdot \bu_t\,d\bx.
\label{energy-law-1}
\end{eqnarray}
We  introduce the   function spaces and  notational conventions:
 \begin{eqnarray*}
 &&\bW(\Omega)=  \{ {\boldsymbol{\omega}}\in H^1(\Om): \boldsymbol{\omega} |_{\Gamma_0} = \mathbf 0 \},\,\,\, 
H_0^1= \{ \boldsymbol\omega \in H^1(\Om): \boldsymbol\omega |_{\partial\Omega}=0\},\\
 &&  \Phi = L^2(\Om), \quad  \Psi = H^1(\Om), \\
 &&\Phi_0= \{ q \in L^2(0,T; L^2(\Omega)): \int_0^T \int_{\Omega} q\,d\bx\,dt=0 \}, \\
&&\bcalw_1=\{ \om \in L^2(0,T;\bW(\Omega)): \int_0^T \int_{\Omega} q_0 \nabla \cdot \om \,d\bx\,dt= 0, \, \forall\, q_0 \in L^2(0;T;\Phi_0) \}, \\
&&\bcalw_2=\{ \om \in L^2(0,T;\bW(\Omega)): \int_0^T \int_{\Omega} c \nabla \cdot \om \,d\bx\,dt= 0, \, \forall \, c \in \mathbf R \}.
 \end{eqnarray*}
   We write $\boldsymbol\omega\in H^1_0$ to indicate that every component of the vector function $\boldsymbol\omega$ is a scalar function  in  $H^1(\Omega)$ vanishing on the boundary. 
\begin{definition}
 $(\bu,q,p) \in \mathbf \bW \times L^2(\Om) \times H^1(\Om)$ is a weak solution if $\forall$ $(\boldsymbol\omega, \varphi, \psi) \in  \bW \times L^2(\Om) \times H^1(\Om)$,
\begin{eqnarray}
&& \int_{\Om} \varphi q \,d\bx= \int_{\Om} \varphi \dive \bu\,d\bx, \label{imperm-weak-1} \\
&& \int_{\Om} q_t \psi\,d\bx +\int_{\Om} \kappa \nabla p \cdot \nabla \psi\,d\bx = \int_{\Om}H \psi\,d\bx, \label{imperm-weak-2} \\
&& \int_{\Om}\{ \big(- p +\tilde\lambda q+\mu_1q_t)I+ \tilde \mu \mathbf D(\bu)+\eta_1 \bD(\bu_t)\}\cdot \mathcal D(\boldsymbol\omega)\,d\bx \nonumber \\
&& = \int_{\Gamma} \mathbf G\cdot \boldsymbol\omega\,dS+\int_{\Omega}(\ff_1+\ff_2)\cdot\boldsymbol\omega\,d\bx,  \label{imperm-weak-3}\\
&& \int_{\Om} \bD(\bu(0))_{ij} \bD(\boldsymbol\omega)_{ij}\,d\bx = \int_{\Om} \bD(\bu_0)_{ij} \bD(\boldsymbol\omega)_{ij}\,d\bx, \,\,\, \int_{\Om} q(0) \varphi\,d\bx = \int_{\Om} q_0 \varphi\,d\bx. \label{imperm-weak-5} 
\end{eqnarray}
\end{definition}
Since no boundary conditions are prescribed on $p$, an inf-sup condition is needed to establish compactness. 
\begin{lemma}
For any positive $T< \infty$, there exists $\alpha_0>0$ such that 
\begin{equation}
\sup_{\bw \in L^2(0,T;\bW(\Omega))} \frac{|\int_0^T \int_{\Omega} \phi \nabla \cdot \bw\,d\bx\,dt|}{||\bD(\bw)||_{L^2(0,T;L^2(\Omega))}} \geq \alpha_0 ||\phi||_{L^2(0,T;L^2(\Omega))}, \quad \forall \phi \in L^2(0,T;L^2(\Omega)). \label{inf-sup1}
\end{equation}
\end{lemma}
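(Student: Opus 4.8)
The plan is to deduce the space-time inf-sup inequality from the classical stationary inf-sup for the divergence operator, by constructing once and for all a bounded linear right inverse of $\dive$ mapping into $\bW(\Omega)$ and applying it pointwise in $t$. The decisive point is that functions in $\bW(\Omega)$ are constrained only on $\Gamma_0$ and are free on $\Gamma = \pOm \setminus \Gamma_0$, which has positive surface measure; this is what allows the right inverse to hit data $\phi$ of arbitrary (not necessarily zero) mean.

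First I would construct a bounded linear operator $\mathcal B : L^2(\Om) \to \bW(\Omega)$ satisfying $\dive(\mathcal B \phi) = \phi$ and $||\mathcal B \phi||_{H^1(\Om)} \le C\, ||\phi||_{L^2(\Om)}$, with $C$ depending only on $\Om$ and $\Gamma_0$. Fix once and for all a smooth field $\bg \in \bW(\Omega)$ with nonzero total flux,
\[
c_0 := \int_\Om \dive \bg\, d\bx = \int_{\Gamma} \bg \cdot \bn\, dS \neq 0,
\]
which exists precisely because $\Gamma$ has positive measure and $\bg$ is unconstrained there. Given $\phi \in L^2(\Om)$, set $\mu = c_0^{-1} \int_\Om \phi\, d\bx$, so that $\phi - \mu\, \dive\bg$ has zero mean; the classical Bogovskii--Ne\v{c}as theory then furnishes $\bw_1 \in H^1_0(\Om) \subset \bW(\Omega)$ with $\dive \bw_1 = \phi - \mu\, \dive\bg$ and $||\bw_1||_{H^1(\Om)} \le C'\, ||\phi - \mu\,\dive\bg||_{L^2(\Om)}$. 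Since $|\mu| \le |c_0|^{-1} |\Om|^{1/2}\, ||\phi||_{L^2(\Om)}$ and $\bg$ is fixed, setting $\mathcal B\phi := \bw_1 + \mu\, \bg$ gives a bounded linear right inverse of $\dive$ with values in $\bW(\Omega)$.

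This construction of $\mathcal B$ is the only real obstacle: one must produce a right inverse of the divergence on the space carrying mixed boundary conditions, where the data need not be mean-free. The resolution lies entirely in the mean, which is absorbed by the fixed field $\bg$; this is exactly the step that uses $\Gamma \neq \emptyset$, and on $H^1_0(\Om)$ it would be unavailable since there the range of $\dive$ is restricted to mean-zero functions.

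With $\mathcal B$ in hand the time-dependent estimate is immediate, and requires no Korn inequality. Given $\phi \in L^2(0,T;L^2(\Om))$, define $\bw(t) = \mathcal B \phi(t)$; as the composition of a strongly measurable map with a fixed bounded linear operator, $\bw$ is strongly measurable and
\[
||\bw||_{L^2(0,T;H^1(\Om))} \le C\, ||\phi||_{L^2(0,T;L^2(\Om))},
\]
so $\bw \in L^2(0,T;\bW(\Omega))$ is an admissible competitor in the supremum. Because $\dive\bw(t) = \phi(t)$ for a.e. $t$,
\[
\int_0^T \int_\Om \phi\, \dive \bw\, d\bx\, dt = ||\phi||^2_{L^2(0,T;L^2(\Om))},
\]
while $||\bD(\bw)||_{L^2(0,T;L^2(\Om))} \le ||\nabla\bw||_{L^2(0,T;L^2(\Om))} \le C\, ||\phi||_{L^2(0,T;L^2(\Om))}$. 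Evaluating the quotient at this particular $\bw$ bounds the supremum below by $C^{-1}\, ||\phi||_{L^2(0,T;L^2(\Om))}$, so the lemma holds with $\alpha_0 = C^{-1}$, a constant which is in fact independent of $T$.
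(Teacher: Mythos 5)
Your proof is correct, but it takes a genuinely different route from the paper's. The paper splits $L^2(0,T;L^2(\Omega))=\Phi_0\oplus\mathbf R$ and invokes an abstract equivalence result (from Gatica) reducing the inf-sup to three separate verifications: the classical inf-sup on $L^2(0,T;H^1_0(\Omega))$ for mean-zero data, a flux argument producing a field in $\bW(\Omega)$ with $\int_\Gamma \bw\cdot\bn\neq 0$ to handle constants, and the decomposition $L^2(0,T;\bW(\Omega))=\bcalw_1+\bcalw_2$. You instead build a single bounded linear right inverse $\mathcal B$ of the divergence on $\bW(\Omega)$ — Bogovskii for the mean-zero part plus a fixed flux-carrying field $\mu\,\bg$ to absorb the mean — and apply it pointwise in $t$. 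The two essential ingredients are the same (the classical right inverse on $H^1_0$, which is what underlies the Girault--Raviart inf-sup the paper cites, and the existence of a field in $\bW(\Omega)$ with nonzero flux through $\Gamma$, which is exactly the paper's item 2), but your assembly is more direct: it dispenses with the abstract equivalence lemma and the sum decomposition, produces an explicit competitor rather than arguing by duality, and makes visible that $\alpha_0$ is independent of $T$. The one point to state carefully is that the Bogovskii operator must be chosen as a fixed linear operator so that $t\mapsto\mathcal B\phi(t)$ is strongly measurable; since Bogovskii's construction is an explicit linear integral operator this is fine, and your bound on the denominator correctly uses only $|\bD(\bw)|\le|\nabla\bw|$, so no Korn inequality is needed.
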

\textit{Proof:} 
The proof presented here is due to Sayas \cite{sayas1} but is a special case of the general LBB condition \cite{Br74}. 
 Since $L^2(0,T; L^2(\Omega)) = \Phi_0 \oplus \mathbf R$  (that is, $\Phi_0$ is the orthogonal complement of $\mathbf R$ under the $L^2$ inner product),  it is clear that the inequality (\ref{inf-sup1}) is equivalent to 
\begin{equation}
\sup_{\bw \in L^2(0,T;\bW(\Omega))} \frac{|\int_0^T \int_{\Omega} (\phi_0 +c)\nabla \cdot \bw\,d\bx\,dt|}{||\bD(\bw)||_{L^2(0,T;L^2(\Omega))}} \geq \alpha_0 [||\phi_0||_{L^2(0,T;L^2(\Omega))} +|c|] \quad \forall \phi_0 \in \Phi_0, \ \forall c \in \mathbf R. \label{inf-sup1-equiv}
\end{equation}
%Define the function spaces
By \cite{gatica1}, (\ref{inf-sup1-equiv}) holds if and only if the following are valid:
\begin{enumerate}
\item There exists an $\alpha_1>0$ such that 
\begin{equation}
\sup_{\bw \in L^2(0,T;\bW(\Omega))} \frac{|\int_0^T \int_{\Omega} \phi_0 \nabla \cdot \bw\,d\bx\,dt|}{||\bD(\bw)||_{L^2(0,T;L^2(\Omega))}} \geq \alpha_1 ||\phi_0||_{L^2(0,T;L^2(\Omega))}, \quad \forall \phi_0 \in\Phi_0, \nonumber
\end{equation}
\item There exists an $\alpha_2>0$ such that 
\begin{equation}
\sup_{\bw \in L^2(0,T;\bW(\Omega))} \frac{|\int_0^T \int_{\Omega} c \nabla \cdot \bw\,d\bx\,dt|}{||\bD(\bw)||_{L^2(0,T;L^2(\Omega))}} \geq \alpha_2 |c|, \quad \forall c \in \mathbf R, \nonumber
\end{equation}
\item $L^2(0,T;\bW(\Omega))=\bcalw_1 + \bcalw_2.$
\end{enumerate}
Note that since $L^2(0,T;H_0^1(\Omega)) \subseteq L^2(0,T;\bW(\Omega)),$ the first item holds if 
\begin{equation}
\sup_{\bw \in L^2(0,T;H_0^1(\Omega))} \frac{|\int_0^T \int_{\Omega} \phi_0 \nabla \cdot \bw\,d\bx\,dt|}{||\bD(\bw)||_{L^2(0,T;L^2(\Omega))}} \geq \alpha_1 ||\phi_0||_{L^2(0,T;L^2(\Omega))}, \,\,\, \forall \phi_0 \in L^2(0,T;\Phi_0). \nonumber
\end{equation}
The latter is  a well-known result shown in \cite{GiRa79}. In order to prove the validity of the second item, note that for $\bw \in L^2(0,T;\bW(\Omega)),$ $\int_0^T \int_{\Omega} c \nabla \cdot \bw\,d\bx\,dt = \int_0^T \int_{\Gamma} c \bw \cdot \bn\,dS\,dt.$  Thus the result holds if it is possible to find a $\bw \in L^2(0,T;\bW(\Omega))$ satisfying $\int_0^T \int_{\Gamma} \bw \cdot \bn \neq 0,$  where $\bn$ denotes the unit outward normal to $\Gamma$.  Assuming that $\Gamma$ is  Lipschitz, we take $\tilde{\Gamma}_1 \subset \Gamma$ with nonzero measure, and a fixed vector $\mathbf m \in \mathbf R^n$ such that for some $\delta_0>0,$
\[ \mathbf m \cdot \bn(\by) \geq \delta_0 \]
for a.e. $\by \in \tilde{\Gamma}_1.$  Choose any $\varphi \in C^{\infty} ([0,T] \times \Gamma)$ with $\varphi \geq 0,$ $\textrm{supp}\, \varphi \subset [0,T] \times \tilde{\Gamma}_1,$ and $\int_0^T \int_{\tilde{\Gamma}_1} \varphi >0.$  The function $\varphi: [0,T] \times \Gamma \rightarrow \mathbf R$ can be lifted to an element $w \in L^2(0,T;H^1(\Omega))$ whose trace on $[0,T] \times \Gamma$  is $\varphi$. Take $\bw =w \mathbf m \in L^2(0,T;\bW(\Omega))$.  Then
\begin{equation}
\int_0^T \int_{\Gamma} \bw \cdot \bn\,dS\,dt = \int_0^T \int_{\tilde{\Gamma}_1} \varphi \mathbf m \cdot \bn\,dS\, dt \geq \delta_0 \int_0^T \int_{\tilde{\Gamma}_1} \varphi\,dS\,dt >0. \nonumber
\end{equation}
Hence the second item holds.  Finally, to prove item 3, we must show that for any $\bw \in L^2(0,T;\bW(\Omega)),$ there exist $ \bw_1 \in \bcalw_1$ and $ \bw_2 \in \bcalw_2$ such that $\bw = \bw_1 + \bw_2.$  Note that $ \bcalw_1$ is equivalent to the set of vectors in $L^2(0,T;\bW(\Omega))$ with constant divergence.  Also, $\bcalw_2$ is equivalent to the set of vectors in $L^2(0,T;\bW(\Omega))$ with normal component on $\Gamma$ equal to $0.$  Since $L^2(0,T;H_1^0) \subseteq \bcalw_2,$ select $\bw_2 \in L^2(0,T;H_0^1(\Omega))$ satisfying $\nabla \cdot \bw_2 = \nabla \cdot \bw - \frac{1}{T|\Omega|} \int_0^T \int_{\Omega} \nabla \cdot \bw.$  Set $\bw_1 = \bw -\bw_2.$  This implies that (\ref{inf-sup1-equiv}) holds and thus completes the proof of the lemma. $\Box$

We are now ready to prove the following theorem.
\begin{theorem} Assume  that  the hypotheses of Lemma 3.3 hold.   Let $F_0=f_0 I$, $ \phi_0 =\phI\, {\det F_0}$ denote an equilibrium solution satisfying (\ref{equilibrium-general}). Suppose that $\tilde\lambda$ and $\tilde\mu$  are as in (\ref{tilde-mu})-(\ref{tilde-lambda}) and satisfy (\ref{stability}).  Assume that 
for some  $T>0$,  the prescribed boundary conditions satisfy  $\tilde\uu \in H^1(0,T;H^{\frac{1}{2}}(\Gamma_0))$ % $\tilde\uu_t \cdot \bn|_{\Gamma_0} =0$ for a.e. $t \in [0,T]$  
and  $P_0\in L^2(0,T;L^2(\Gamma))$;  let   $\bu_0 \in \bW(\Omega)$  denote the prescribed   initial displacement.  Then there exists a unique weak solution $(\bu,q,p)$ to the initial boundary value problem (\ref{nabla-p})-(\ref{boundary-stress-linear})  that satisfies
\begin{eqnarray*}
&& \bu \in L^{\infty}(0,T;H^1(\Omega)), \ \bu_t \in L^2(0,T;H^1(\Omega)) \\
&& q \in L^{\infty}(0,T;L^2(\Omega)), \ q_t \in L^2(0,T;L^2(\Omega)) \\
&& p \in L^2(0,T;H^1(\Omega)).% \ \pt = \frac{1}{\kappa}[\beta p-\pi_2(f_0^n)-\pi_2^{\prime}(f_0^n)f_0^{n-1}(q+Q)] \in L^2(0,T;L^2(\Omega)).
\end{eqnarray*}
\end{theorem}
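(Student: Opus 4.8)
The plan is to construct the solution by a spatial Galerkin scheme, derive uniform bounds from the energy identity (\ref{energy-law-1}) together with the coercivity furnished by (\ref{stability}) and the inf--sup inequality (\ref{inf-sup1}), pass to the limit, and finally deduce uniqueness from linearity and the same identity. First I would fix countable bases of $\bW(\Omega)$, of $\Phi=L^2(\Omega)$, and of $\Psi=H^1(\Omega)$, and seek finite-dimensional approximations $(\bu^m,q^m,p^m)$ solving (\ref{imperm-weak-1})--(\ref{imperm-weak-3}) tested against the first $m$ elements of each basis, with projected initial data as in (\ref{imperm-weak-5}). The algebraic relation (\ref{imperm-weak-1}) expresses $q^m$ through $\bu^m$, while the viscous contributions $\eta_1\bD(\bu_t)$ and $\mu_1q_t$ in (\ref{imperm-weak-3}) and the term $q_t$ in (\ref{imperm-weak-2}) render the semidiscrete problem a solvable linear differential-algebraic system on $[0,T]$ by standard theory.

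The core a priori estimate comes from inserting $\bu^m_t$ as a test function so as to reproduce (\ref{energy-law-1}) at the discrete level. The coercivity of the elasticity tensor $\tilde{\mathfrak C}$ guaranteed by (\ref{stability}), namely the bound (\ref{coercivity}), controls the stored-energy term from below by $c\,\|\bD(\bu^m)\|_{L^2}^2$, and since $\bu|_{\Gamma_0}=\mathbf 0$, Korn's inequality upgrades this to control of $\|\bu^m\|_{H^1}$. The dissipative terms, with $\kappa,\eta_1,\mu_1>0$, control $\nabla p^m$ in $L^2$, $\bu^m_t$ in $H^1$ (again by Korn), and $q^m_t$ in $L^2$. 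I would estimate the boundary term $\int_{\Gamma}\mathbf G\cdot\bu^m_t$ by the trace theorem and $\int_{\Omega}(\ff_1+\ff_2)\cdot\bu^m_t$ by Cauchy--Schwarz, absorb the $\bu^m_t$ factors into the dissipation by Young's inequality, and bound the data through (\ref{f1})--(\ref{Hh}) and the extension bounds on the lifted boundary data. A Gr\"onwall argument then yields bounds on $(\bu^m,q^m,q^m_t,\bu^m_t)$, uniform in $m$, in exactly the norms announced in the statement, with the single exception of control of $p^m$ itself.

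The pressure is the crux of the argument, since no boundary condition is imposed on $p$ and (\ref{energy-law-1}) bounds only $\nabla p$. I would recover $p^m$ from the momentum equation (\ref{imperm-weak-3}): the quantity $\int_{\Omega}p^m\,\dive\boldsymbol\omega$ equals the stress and source terms already under control, so the inf--sup inequality (\ref{inf-sup1}) of the preceding lemma converts this into a bound on $\|p^m\|_{L^2(0,T;L^2(\Omega))}$, which together with the $\nabla p^m$ bound gives $p^m$ bounded in $L^2(0,T;H^1(\Omega))$. With all uniform bounds in hand, Banach--Alaoglu extracts weak-$\ast$ convergent subsequences; because every term in (\ref{imperm-weak-1})--(\ref{imperm-weak-3}) is linear in the unknowns, the passage to the limit is immediate, and the initial conditions (\ref{imperm-weak-5}) are recovered from the continuity in time granted by $\bu_t,q_t\in L^2$, which embeds $\bu,q$ into $C([0,T];L^2(\Omega))$.

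Uniqueness follows from linearity: for the difference of two solutions all data vanish, so the right-hand side of (\ref{energy-law-1}) is zero, the dissipation is nonnegative, and the coercive stored energy starts at zero, forcing the difference to vanish. I expect the two main obstacles to be, first, the recovery and regularity of the pressure through the inf--sup condition, including the compatibility between the impermeability constraint and the total volume change (which is precisely why the pressure space is split into its mean-zero part and the constants in the preceding lemma, the constant being pinned down by the boundary flux through the $-p\,\dive\boldsymbol\omega$ coupling); and second, the rigorous justification of the energy identity (\ref{energy-law-1}) for the limiting weak solution needed in the uniqueness step, where using $\bu_t$ as a test function must be legitimized by the available regularity $\bu_t\in L^2(0,T;H^1(\Omega))$, $q_t\in L^2(0,T;L^2(\Omega))$.
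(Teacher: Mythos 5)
Your overall architecture coincides with the paper's: Faedo--Galerkin approximation, a discrete version of the energy identity (\ref{energy-law-1}) obtained by testing with $\bu_t^N$, coercivity from (\ref{stability}) plus Korn's inequality, an inf--sup argument for the pressure, weak-$\ast$ compactness, and uniqueness by linearity. However, there is a genuine gap at the discrete level. You propose to "fix countable bases of $\bW(\Omega)$, of $\Phi=L^2(\Omega)$, and of $\Psi=H^1(\Omega)$" independently, and then to bound $\|p^m\|_{L^2(0,T;L^2(\Omega))}$ by applying the continuous inf--sup inequality (\ref{inf-sup1}). This step fails as written: the Galerkin pressure $p^m$ satisfies the momentum equation only against test functions $\boldsymbol\omega$ in the finite-dimensional space $\bW_m$, so the supremum in (\ref{inf-sup1}), taken over all of $L^2(0,T;\bW(\Omega))$, is not at your disposal. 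What is needed is a \emph{discrete} inf--sup condition with a constant independent of $m$, and that does not hold for arbitrary, uncoupled choices of $\bW_m$ and $\Psi_m$.

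The paper resolves exactly this point by building the Galerkin spaces compatibly: it decomposes $\bW(\Omega)=\bW_1\oplus\bW_2$ into divergence-free fields and their orthogonal complement (with respect to $\int_\Omega \bD(\bw)\cdot\bD(\bv)$), takes dense sequences $\{\bw^{(1),k}\}$, $\{\bw^{(2),k}\}$ in each, and sets $\Phi_N=\Psi_N=\mathrm{span}\{\dive\bw^{(2),k}\}_{k=1}^N$. This construction buys two things your plan needs but does not secure. First, it yields the uniform discrete inf--sup condition (\ref{inf-supN}) with the same constant $\alpha_0$ as in (\ref{inf-sup1}) (via the splitting into mean-zero pressures and constants that you correctly identify as the delicate point, but which must be carried out on the discrete spaces). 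Second, it makes the discrete energy law (\ref{energy-law-discrete}) close: to convert $-\int_\Omega p^N\,\dive\bu_t^N$ into $-\int_\Omega p^N q_t^N$ and then cancel it against (\ref{imperm-weak-2N}) tested with $\psi^N=p^N$, you need $p^N\in\Phi_N$ and $\Psi_N\subseteq\Phi_N$, i.e.\ precisely the compatibility $\Psi_N=\Phi_N$ with $\Phi_N$ consisting of divergences of elements of $\bW_N$. With three unrelated bases, neither the discrete energy identity nor the uniform pressure bound follows, and the compactness argument cannot start. Everything downstream of this point in your proposal (weak limits, recovery of initial data, uniqueness) matches the paper and is sound once the discrete spaces are constructed as above.
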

\textit{Proof}:  First of all, we note that the right hand side terms   $\mathbf f_i$, $H$ and $\mathbf {\mathcal H}$ of the governing equations are given by   (\ref{h-kappa})-(\ref{Hh}) with $P=0$ and   $Q=\nabla\cdot\uu$.
 We apply the Faedo-Galerkin method together with the discrete version of the inf-sup condition of Lemma 4.2.  For this, we  decompose $\bW(\Omega)=\bW_1(\Omega) \oplus \bW_2(\Omega)$, where $\bW_1$ is the set of all divergence-free vectors in $\bW(\Omega)$ and $\bW_2$ denotes its orthogonal complement under the inner product $\int_{\Om} \bD(\bw)_{ij}\bD(\bv_1)_{ij}$, for $\bw, \bv_1 \in \bW(\Omega)$.  $\bW_1$ and $\bW_2$ are both separable Hilbert spaces, so there exist  sequences of linearly independent smooth functions $\{ \bw^{(1),k} \}_{k=1}^{\infty}$  and $\{ \bw^{(2),k} \}_{k=1}^{\infty}$  which are dense in $\bW_1$ and $\bW_2$, respectively.  Moreover,  the sequence $\{ \bw^{(1),k}, \bw^{(2),k} \}_{k=1}^{\infty}$ forms a linearly independent dense set in $\bW(\Omega)$.  Define $\phi^k = \dive \bw^{(2),k}$ for $k=1, \ldots, \infty$.  $\{\phi^k\}_{k=1}^{\infty}$ forms a linearly independent dense set in $\Phi$.  Since $\Psi \subseteq \Phi,$ $\{\phi^k\}_{k=1}^{\infty}$ forms a linearly independent dense set in $\Psi$ as well.  For any integer $N \geq 1$, define the finite dimensional Galerkin spaces
\begin{equation}
 \bW_N = \textrm{span} \{\bw^{(1),k}, \bw^{(2),k}\}_{k=1}^N, \,\, \Phi_N = \textrm{span} \{\phi^k\}_{k=1}^N, \,\, \Psi_N = \Phi_N. \nonumber
\end{equation}
We now establish the discrete inf-sup condition.  By (\ref{inf-sup1}) and according to \cite{gatica1}, it can be shown that for the same $\alpha_0$ as in (\ref{inf-sup1}) and for each $N \geq 1$,
\begin{equation}
\sup_{\bw^N \in L^2(0,T;\bW_N)} \frac{|\int_0^T \int_{\Om} \phi^N \dive \bw^N\,d\bx\,dt|}{||\bD(\bw^N)||_{L^2(0,T;L^2(\Om))}} \geq \alpha_0 ||\phi^N||_{L^2(0,T;L^2(\Om))}, \quad \forall \phi^N \in L^2(0,T;\Phi_N). \label{inf-supN}
\end{equation}
Next, we  set up the finite dimensional approximation of the problem.  We look for $(\bu^N,q^N,p^N) \in \bW_N \times \Phi_N \times \Psi_N$  satisfying the following integral relations,  for all $(\bw^N,\phi^N,\psi^N) \in \bW_N \times \Phi_N \times \Psi_N$:
\begin{eqnarray}
&& \int_{\Om} \phi^N q^N\,d\bx = \int_{\Om} \phi^N \dive \bu^N\,d\bx, \label{imperm-weak-1N} \\
&& \int_{\Om} q_t^N \psi^N \,d\bx+\int_{\Om} \kappa \nabla p^N \cdot \nabla \psi^N\,d\bx = -\int_{\Om}\mathbf{\mathcal H} \cdot \nabla \psi^N\,d\bx, \label{imperm-weak-2N} \\
&& \int_{\Om} \big(- p^N +\tilde\lambda q^N+\mu_1q_t^N)I+ \tilde \mu \mathbf D(\bu^N)+\eta_1 \bD(\bu_t^N)\big)\cdot \mathcal D(\boldsymbol\omega^N)\,d\bx
\nonumber\\
&& \,\,\,\, =\int_{\Gamma}\mathbf G\cdot\boldsymbol\omega^N\,dS+ \int_{\Omega}(\ff_1+\ff_2)\cdot\boldsymbol\omega^N\,d\bx,
 \label{imperm-weak-3N} \\
&& \int_{\Om} \bD(\bu^N(0))_{ij} \bD(\bw^N)_{ij} \,d\bx= \int_{\Om} \bD(\bu_0)_{ij} \bD(\bw^N)_{ij}\,d\bx, \label{imperm-weak-4N} \\
&& \int_{\Om} q^N(0) \phi^N\,d\bx = \int_{\Om} q_0 \phi^N\,d\bx. \label{imperm-weak-5N}
\end{eqnarray}
This leads to a system of linear ordinary differential equations in time for the coefficients of $\bu^N, q^N,p^N$ with a complete set of initial conditions.  So,  there exists a unique triple $(\bu^N,q^N,p^N) \in \bW_N \times \Phi_N \times \Psi_N$ satisfying the system for all $t \in [0,T]$. 
As in the continuous case, it can be shown that  the discrete system  has the following energy law:
\begin{small}{
\begin{eqnarray}
&&\int_{\Om}\frac{1}{2} \big({\tilde \mu}|\bD(\bu^N(T))|^2 +{\tilde \lambda}  (q^N)^2(T)\big)d\bx+\int_0^T\int_{\Om}\frac{1}{2} \big(\kappa |\nabla p^N|^2+ {\eta_1} |\bD(\bu_t^N)|^2+\mu_1 (q^N_t)^2\big)\,d\bx\,dt\nonumber\\&& =\int_0^T\{\int_{\Gamma} \mathbf G \cdot \bu_t^N\,dS+ \int_{\Omega}(\mathbf f_1+\mathbf f_2)\cdot \bu_t^N\,d\bx\}\,dt+ \int_{\Om}\frac{1}{2} \big({\tilde \mu}|\bD(\bu^N(0))|^2 +{\tilde \lambda}  (q^N)^2(0)\big)\,d\bx.
\label{energy-law-discrete} 
\end{eqnarray}}\end{small}
Using (\ref{energy-law-discrete}), Korn's inequality,  the initial conditions (\ref{imperm-weak-4N}) and (\ref{imperm-weak-5N}), and the discrete inf-sup condition (\ref{inf-supN}), we find that
 $\bu^N$,   $q^N$,  $\bu_t^N$, $q_t^N$ and  $p^N$ are uniformly bounded in 
 $L^{\infty}(0,T;H^1(\Om))$,  $L^{\infty}(0,T;L^2(\Om))$,
 $L^2(0,T;H^1(\Om))$,  $L^2(0,T;L^2(\Om))$ and  $L^2(0,T;H^1(\Om))$, respectively. 
 Since $T$ is finite, we find upon passing to subsequences, that
\begin{itemize}
\item $\exists \bu \in H^1(0,T;H^1(\Omega)) \cap L^{\infty}(0,T;H^1(\Omega))$ such that $\bu^N \rightharpoonup 
\bu$ in $H^1(0,T; H^1(\Omega))$ and $\bu^N \stackrel{\ast}{\rightharpoonup} \bu$ in $L^{\infty}(0,T;H^1(\Omega))$;
\item $\exists q \in H^1(0,T;L^2(\Omega)) \cap L^{\infty}(0,T;L^2(\Omega))$ such that $q^N \rightharpoonup q$ in $H^1(0,T;L^2(\Omega))$ and $q^N \stackrel{\ast}{\rightharpoonup} q$ in $L^{\infty}(0,T;L^2(\Omega))$;
\item $\exists p \in L^2(0,T;H^1(\Omega))$ such that $p^N \rightharpoonup p$ in $L^2(0,T;H^1(\Omega)).$
\end{itemize}
As in  (Temam \cite{temam1}), it can be shown that the triple $(\bu,q,\op)$ is a weak solution of the system. 
Finally, uniqueness of the weak solution follows from the energy law and the inf-sup condition. $\Box$

\subsection{Fully permeable boundary}
In this section, we consider the case where the boundary of the gel is fully permeable to its surrounding inviscid solvent. The governing equations consist of  (\ref{imperm-qeqn-1})-(\ref{stress-imperm})  together with  (\ref{h-kappa})-(\ref{Hh}). 
The initial and boundary conditions are given by  (\ref{u0})-(\ref{boundary-stress-linear}) and (\ref{permeable-pure}), which  for $\eta_2=0, \nu_2=0$, the latter reduces to 
\begin{equation} p=0, \quad \textrm{on} \,\, \partial\Omega. \end{equation}
The energy law has the same expression as in the impermeable case (\ref{energy-law-1}). 
%We now define   weak solution. 
\begin{definition}
A triple $(\bu, q, p) \in \mathbf W \times L^2(\Omega) \times H_0^1(\Omega)$ is called a weak solution if for all $(\bw, \phi, \psi) \in \mathbf W \times L^2(\Omega) \times H_0^1(\Omega)$ equations  (\ref{imperm-weak-1})-(\ref{imperm-weak-5}) hold. 
\end{definition}
We now state the following theorem.
\begin{theorem}
Assume  that  the hypotheses of Lemma 3.3 hold.   Let $F_0=f_0 I$, $ \phi_0 =\phI {\det F_0}$ denote an equilibrium solution satisfying (\ref{equilibrium-general}). Suppose that $\tilde\lambda$ and $\tilde\mu$  are as in (\ref{tilde-mu})-(\ref{tilde-lambda}) and satisfy (\ref{stability}).   Let $\tilde\bU$ and $P_0$ be as in Theorem 4.3, and  $\bu_0 \in \bW(\Omega)$ denote the displacement initial condition.
Then there exists a unique weak solution $(\bu,q, p)$ of problem (\ref{nabla-p})-(\ref{boundary-stress-linear})  which satisfies
\begin{eqnarray*}
&& \bu \in L^{\infty}(0,T;H^1(\Omega)), \ \bu_t \in L^2(0,T;H^1(\Omega)), \,  p \in L^2(0,T;H^1(\Omega)),\\
&& q \in L^{\infty}(0,T;L^2(\Omega)), \ q_t \in L^2(0,T;L^2(\Omega)).
\end{eqnarray*}
\end{theorem}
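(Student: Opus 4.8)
The plan is to mirror the Faedo--Galerkin argument used for Theorem 4.3, the essential difference being the pressure boundary condition: here $p\in H_0^1(\Om)$ rather than $p\in H^1(\Om)$ with a Neumann condition. This single change removes the need for the inf-sup (LBB) machinery of Lemma 4.2. In the impermeable case the inf-sup condition was needed because the energy law controls only $\nabla p$ in $L^2$, leaving the constant mode of $p$ undetermined; here the Dirichlet condition $p|_{\pOm}=\mathbf 0$ pins down that mode, and the Poincar\'e inequality converts $\int_\Om\kappa|\nabla p|^2\ge c\,\|\nabla p\|_{L^2}^2$ directly into a full $H^1$ bound on $p$. Thus the energy estimate alone will control $p$ in $L^2(0,T;H^1(\Om))$, and the whole saddle-point difficulty disappears.

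First I would set up the Galerkin approximation. Since the inf-sup compatibility between the displacement and pressure bases is no longer required, I can choose the bases independently: a linearly independent set $\{\bw^k\}$ dense in $\bW(\Om)$, a linearly independent set $\{\psi^k\}$ dense in $H_0^1(\Om)$, and $q^N$ sought in $\Phi_N=\textrm{span}\{\dive\bw^k\}$, so that the constraint (\ref{imperm-weak-1}) forces $q^N=\dive\bu^N$ exactly. Substituting into the weak equations (\ref{imperm-weak-1})--(\ref{imperm-weak-5}) with $\bV=\mathbf 0$ and $P$ the extension of $P_0$ (so that $\ff_1,\ff_2,H,\mathbf G$ are read off from (\ref{h-kappa})--(\ref{Hh})) yields a linear system of ordinary differential equations for the Galerkin coefficients, complemented by the initial conditions (\ref{imperm-weak-4N})--(\ref{imperm-weak-5N}); standard ODE theory gives a unique discrete solution $(\bu^N,q^N,p^N)$ on $[0,T]$.

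Next I would derive the discrete energy law, which has exactly the form (\ref{energy-law-discrete}): multiply the discrete momentum relation by $\bu_t^N$, use the discrete versions of (\ref{imperm-qeqn-1}) and (\ref{imperm-qeqn-2}), and integrate by parts, noting that the boundary integrals involving the pressure now vanish simply because $p^N$ vanishes on $\pOm$. Combined with Korn's inequality (controlling $\bu^N$ in $H^1$ from $\bD(\bu^N)$ and the homogeneous data on $\Gamma_0$), the coercivity conditions (\ref{stability}) on $\tilde\lambda,\tilde\mu$, and the Poincar\'e inequality applied to $p^N\in H_0^1(\Om)$, this produces uniform bounds: $\bu^N$ in $L^\infty(0,T;H^1(\Om))$, $\bu_t^N$ in $L^2(0,T;H^1(\Om))$, $q^N$ in $L^\infty(0,T;L^2(\Om))$, $q_t^N$ in $L^2(0,T;L^2(\Om))$, and $p^N$ in $L^2(0,T;H^1(\Om))$. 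Passing to subsequences, I would extract weak and weak-$*$ limits $(\bu,q,p)$ in these spaces, and, since every equation is linear in the unknowns and the bases are dense, pass to the limit in the Galerkin relations (as in Temam \cite{temam1}) to identify $(\bu,q,p)$ as a weak solution in the sense of Definition 4.4.

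Finally, uniqueness would follow from the energy law applied to the difference of two solutions with vanishing data: the right-hand side is then zero, forcing $\bD(\bu)=\mathbf 0$, $q=0$ and $\nabla p=\mathbf 0$ a.e.; the Dirichlet condition $p|_{\pOm}=0$ gives $p\equiv 0$ via Poincar\'e, while $\bD(\bu)=\mathbf 0$ together with $\bu|_{\Gamma_0}=\mathbf 0$ gives $\bu\equiv\mathbf 0$ by Korn. I expect the main obstacle to be bookkeeping rather than conceptual: carefully verifying that the time-dependent data $\tilde\bU\in H^1(0,T;H^{1/2}(\Gamma_0))$ and $P_0\in L^2(0,T;L^2(\Gamma))$ furnish forcing terms $\ff_1,\ff_2,H,\mathbf G$ with enough regularity in time for the energy estimate to close, and tracking the boundary contributions through the integration by parts. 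The genuine conceptual simplification relative to Theorem 4.3 is precisely that the Dirichlet condition on $p$ supplies the coercivity that the inf-sup condition had to provide in the impermeable case.
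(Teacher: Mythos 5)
Your proposal is correct and follows essentially the same route as the paper: a Faedo--Galerkin discretization in which the Dirichlet condition $p|_{\pOm}=0$ together with the Poincar\'e inequality yields the $L^2(0,T;H^1(\Om))$ bound on the pressure directly from the term $\int_{\Om}\kappa|\nabla p^N|^2$ in the discrete energy law (\ref{energy-law-discrete}), so that the inf-sup condition of Lemma 4.2 plays no role here. The only cosmetic difference is that the paper builds the discrete pressure space from the divergences of a distinguished part of the displacement basis (via the decomposition of $\bW$ into $\mathbf{\mathcal V}_1$, $\mathbf{\mathcal V}_2$, $\mathbf{\mathcal V}_3$), whereas you take an independent dense basis of $H_0^1(\Om)$; both choices preserve $q^N=\dive\bu^N$ and close the same energy estimate.
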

\textit{Proof}: We define the function spaces
\begin{eqnarray*}
&& \mathbf{\mathcal V}_1=\{ \bw \in \bW: \dive \bw =0\}, \,\,
 \mathbf{\mathcal V}_2=\{ \mathbf 0 \} \cup \{ \bw \in \bW \setminus \mathbf{\mathcal V}_1: \bw|_{\pOm} = \mathbf 0 \}, \\
&& \quad \quad \quad \mathbf{\mathcal V}_3=\{ \mathbf 0 \} \cup [\bW \setminus (\mathbf{\mathcal V}_1 \cup \mathbf{\mathcal V}_2)]. 
\end{eqnarray*}
We point out  that  $\mathbf{\mathcal V}_1$, $\mathbf{\mathcal V}_2$, and $\mathbf{\mathcal V}_3$ are separable Hilbert spaces.  Therefore, there exist sequences $\{\bw^{(1),k}\}_{k=1}^{\infty} \subset \mathbf{\mathcal V}_1$,   $\{\bw^{(2),k}\}_{k=1}^{\infty} \subset \mathbf{\mathcal V}_2$ and  $\{\bw^{(3),k}\}_{k=1}^{\infty} \subset \mathbf{\mathcal V}_3$ of linearly independent smooth functions which are dense in $\mathbf{\mathcal V}_1$, $\mathbf{\mathcal V}_2$ and $\mathbf{\mathcal V}_3$, respectively. The sequence $\{\bw^{(1),k}, \bw^{(2),k}, \bw^{(3),k}\}_{k=1}^{\infty}$ forms a linearly independent dense set in $\bW$.  Define $\phi^{(2),k}=\dive \bw^{(2),k}$ and $\phi^{(3),k}=\dive \bw^{(3),k}$ for $k=1,\ldots,\infty$.  The sequence $\{\phi^{(2),k}, \phi^{(3),k}\}_{k=1}^{\infty}$ forms a linearly independent dense set in $\Phi$.  Since $\Psi \subset \Phi$, and it consists of functions which are zero on $\pOm$, the sequence $\{\phi^{(2),k}\}_{k=1}^{\infty}$ forms a linearly independent dense set in $\Psi$.  For any integer $N \geq 1$, we  define the finite dimensional Galerkin spaces
\begin{equation}
 \bW_N=\textrm{span} \{\bw^{(1),k},\bw^{(2),k},\bw^{(3),k}\}_{k=1}^N, \,\, \Phi_N=\textrm{span} \{\phi^{(2),k},\phi^{(3),k}\}_{k=1}^N, \,\,
 \Psi_N=\textrm{span} \{\phi^{(2),k}\}_{k=1}^N. \nonumber
\end{equation}
It is easy to check that the discrete energy law  (\ref{energy-law-discrete}) holds as well. 
By the theory of  linear differential equations, for each $N$ there exists a unique $(\bu^N,q^N,\op^N) \in \bW_N \times \Phi_N \times \Psi_N$ satisfying (\ref{imperm-weak-1N})-(\ref{imperm-weak-5N}) for all $t \in [0,T]$. 
Integrating in time over $[0,T]$ and applying the initial conditions (\ref{u0}), and using the discrete energy law, we conclude that the sequences  $\bu^N$, $q^N$,  $\bu_t^N$, $q_t^N$ and  $p^N$ are uniformly bounded in 
$L^{\infty}(0,T;H^1(\Om))$,
 $L^{\infty}(0,T;L^2(\Om))$,
 $L^2(0,T;H^1(\Om))$,
 $L^2(0,T;L^2(\Om))$ and
 $L^2(0,T;H^1(\Om))$, respectively. 
Since  $T< \infty$, passing to subsequences gives 
\begin{itemize}
\item $\exists \bu \in H^1(0,T;H^1(\Omega)) \cap L^{\infty}(0,T;H^1(\Omega))$ such that $\bu^N \rightharpoonup 
\bu$ in $H^1(0,T; H^1(\Omega))$ and $\bu^N \stackrel{\ast}{\rightharpoonup} \bu$ in $L^{\infty}(0,T;H^1(\Omega))$;
\item $\exists q \in H^1(0,T;L^2(\Omega)) \cap L^{\infty}(0,T;L^2(\Omega))$ such that $q^N \rightharpoonup q$ in $H^1(0,T;L^2(\Omega))$ and $q^N \stackrel{\ast}{\rightharpoonup} q$ in $L^{\infty}(0,T;L^2(\Omega))$;
\item $\exists p \in L^2(0,T;H^1(\Omega))$ such that $p^N \rightharpoonup p$ in $L^2(0,T;H^1(\Omega)).$
\end{itemize}
The conclusion that the triple $(\bu,q, p)$ is a unique weak solution of the system follows as in the case of impermeable boundary.

\section{Viscous solvent}
In this section, we consider the problem of a gel immersed in a viscous solvent. That is, we take the viscosity coefficients $\eta_i>0$ and $\mu_i>0$, $i=1, 2,$ in the constitutive equations of the stress.   In contrast with the case of non-viscous solvent, 
  with  scalar permeability conditions, these  are now vector relations, for impermeable as well as permeable boundary. The governing equations are
 \begin{eqnarray}
&& \dive ({\phi_0} \bu_t +(1-{\phi_0}) \bv_2) = h, \label{zero-divergence00} \\
&& \dive \mcT_1(\nablax\bu,\nablax\bu_t) -\phi_0\nabla p+\beta(\bv_2-\bu_t)=\mathbf f_1 , \label{zero-momentum11} \\
&& \dive \mcT_2(\nablax\bu, \nabla\bv_2)-(1-\phi_0)\nabla p +\beta(\bu_t-\bv_2)=\mathbf f_2, \label{zero-momentum-22} \\
&&\calT_1=  2\tilde\mu \mathbf D(\bu) + \tilde\lambda \tr{\mathbf D(\bu)} I + \eta_1\mathbf D(\bv_1)+ \mu_1(\nabla\cdot\bv_1),
\label{stress T1} \\
&&\mathcal T_2=  \eta_2\mathbf D(\bv_2)+ \mu_2(\nabla\cdot\bv_2), \label{stress T2}\end{eqnarray}
with $\tilde \mu$ and $\tilde\lambda$ as in equations (\ref{tilde-lambda-mu}), and $h, \mathbf f_1$,  $\mathbf f_2$  and $\mathbf G$ (shown below)  as in (\ref{h-kappa})-(\ref{Hh}). 
As in the case of inviscid solvent, $\uu$, $ \uu_t, $ $\vv$ and $P$ are extensions of  the  boundary data  satisfied by  the original variables $\tilde\bu, $ $\tilde\bv_1$, $\tilde \bv_2$ and $\tilde p$, and subject to compatibility conditions.  Given $\Gamma_0, \Gamma \subset  \partial \Omega$,  $\Gamma_0\cap\Gamma=\emptyset$,  and $T>0$, initial and boundary conditions are:
\begin{eqnarray}
 &&\bu|_{t=0} =\bu_0, \label{initial u} \,\,\,
\bu|_{\Gamma_0} = \mathbf 0,  \bu_0|_{\Gamma_0} = \mathbf 0, \label {displacement gamma0}\\
&&  (\mcT_1+\calT_2) \bn|_{\Gamma} = \mathbf G.\label{traction gamma1}
\end{eqnarray}
As in the previous section, permeability conditions on $\Gamma$ need to be prescribed as well. The selection of $\vv$ in (\ref{h-kappa})-(\ref{Hh})  will be made according to the  boundary permeability. 
%\end{document}
\subsection{Impermeable boundary}
We now assume that $ \partial\Omega$ is impermeable to the solvent. Accordingly, we require that  the vectorial boundary condition (\ref{impermeable-pure}) hold.
Following Ladyzhenskaya (\cite{La69}, Ch.1, Sec. 2]), we assume that the initial displacement  $\bu_0=\bu_0(\bx)$ in (\ref{initial u}) is continuously differentiable and such that\begin{equation}
\int_{\Omega}\nabla\cdot \bu_0 =0. \label{boundary compatibility u0}
\end{equation}
%\begin{equation}
%\bU_t= \bv_2, \quad  {\textrm on}\quad  \partial\Omega. 
%\label{viscous-impermeable} \end{equation}
 For the sake of compatibility, we define  \begin{equation}\vv=\uu_t\, \,\, \textrm{in}\,\, \Omega.  \label{compatibility gamma1}\end{equation}
The latter together with (\ref{compatibility gamma1}) imply that 
\begin{equation}  h=-\nabla\cdot(\phi_0 \uu_t+ (1-\phi_0)\vv)=0 \,\, \textrm{in } \,\, \Omega. \label{h=0}\end{equation}
The governing system consists of equations (\ref{zero-divergence00})-(\ref{boundary compatibility u0}) and   (\ref{impermeable-pure}).  It satisfies the following energy relation:
\begin{eqnarray}
&& \frac{d}{dt} \int_{\Om} \frac{1}{2}[\tilde \mu |\bD(\bu)|^2 +\tilde\lambda (\dive \bu)^2]\,d\bx +\int_{\Om} [\eta_1 |\bD(\bu_t)|^2 +\mu_1 (\dive \bu_t)^2 +\eta_2 |\bD(\bv_2)|^2 \nonumber\\
&& +\mu_2 (\dive \bv_2)^2 +\beta |\bu_t-\bv_2|^2]\,d\bx = \int_{\Gamma} \mathbf G \cdot \bu_t\,dS -\int_{\Om}\big(\nabla\ff_1\cdot D(\bu_t)+ 
\nabla \ff_2\cdot D(\bv_2)\,d\bx.  \label{energy-law-3} 
\end{eqnarray}
The function space of the problem is 
\begin{eqnarray*}
&& \mathbf{\mathfrak W} =\{ (\bw^1,\bw^2) \in H^1(\Om) \times H^1(\Om): \bw^1, \bw^2|_{\Gamma_0} = \mathbf 0; \ \bw^1-\bw^2|_{\partial\Omega} = \mathbf 0; \\
&& \dive [\phi_0\bw^1 +(1-\phi_0) \bw^2] =0 \}.
\end{eqnarray*}
%We now state the following definition.
\begin{definition}
A weak solution is any $(\bu,\bv_2) \in \mathbf{\mathfrak W}$ satisfying for all $(\bw^1,\bw^2)\in \mathbf{\mathfrak W}$ the equations
\begin{eqnarray}
&& \int_{\Om}\{[\tilde\mu \bD(\bu)+\tilde\lambda \dive \bu I +\eta_1 \bD(\bu_t) +\mu_1 \dive \bu_tI]\cdot \bD(\bw^1) +[ \eta_2 \bD(\bv_2) 
 +\mu_2 \dive \bv_2 I]\cdot\bD(\bw^2)\nonumber\\ && + \beta (\bu_t-\bv_2)\cdot (\bw^1-\bw^2)\}\,d\bx = \int_{\Gamma} \mathbf G \cdot \bw^1\,dS -\int_{\Om} (\nabla\ff_1\cdot\bD(\bw^1) + \nabla\ff_2\cdot \bD(\mathbf w^2))\,d\bx,\label{visc-imperm-weak-1}   \\
&& \int_{\Om} \bD(\bu(0))\cdot\bD(\bw^1)\,d\bx = \int_{\Om} \bD(\bu_0)\cdot\bD(\bw^1)\,d\bx. \label{visc-imperm-weak-2}
\end{eqnarray}
\end{definition}
We now state  the following theorem.
\begin{theorem}
Suppose that $(\phi_0, F_0)$,  $\tilde\lambda$ and $\tilde\mu$ are as in theorem 4.3. Suppose that the viscosity coefficients satisfy $\mu_i, \eta_i>0, i=1,2$. Assume that 
for some finite $T>0$, $\uu \in H^1(0,T;H^1(\Omega))$,  $\bg \in L^2(0,T;L^2(\Gamma))$ for a.e. $t \in [0,T]$,   $\bu_0 \in \mathbf H_0^1(\Omega)$, and satisfy relations
(\ref{compatibility gamma1})-(\ref{boundary compatibility u0}).
Let $\mathbf f_i, i=1,2$, $H$ and $\mathbf {\mathcal H}$ be as in (\ref {f1})-(\ref{Hh}) with $P=0$.  Then there exists a unique weak solution $(\bu,\bv_2,p)$ to the initial boundary value problem (\ref{zero-divergence00})--(\ref{boundary compatibility u0})  and (\ref{impermeable-pure}) such that
\begin{eqnarray*}
&& \bu \in H^1(0,T;H^1(\Om)), \quad \bv_2 \in L^2(0,T;H^1(\Om)), \\
&& p \in L^2(0,T;H^1(\Omega)).% \ \pt = \frac{1}{\kappa}[\beta p-\pi_2(f_0^n)-\pi_2^{\prime}(f_0^n)f_0^{n-1}(q+Q)] \in L^2(0,T;L^2(\Omega)).
\end{eqnarray*}
\end{theorem}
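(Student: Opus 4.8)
The plan is to reproduce the Faedo--Galerkin construction of Theorems 4.3 and 4.5, but now carried out inside the space $\mathbf{\mathfrak W}$, in which the impermeability condition $\bw^1-\bw^2|_{\pOm}=\mathbf 0$ and the saturation constraint $\dive[\phi_0\bw^1+(1-\phi_0)\bw^2]=0$ are already imposed. The advantage of testing within $\mathbf{\mathfrak W}$ is that the Lagrange multiplier $p$ drops out of the velocity problem: for any admissible pair $(\bw^1,\bw^2)$, the combined pressure term $\int_\Om\nabla p\cdot[\phi_0\bw^1+(1-\phi_0)\bw^2]\,d\bx$ integrates by parts into an interior term annihilated by the constraint and a boundary term controlled by the conditions defining $\mathbf{\mathfrak W}$ together with the traction datum $\mathbf G$. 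I would therefore fix a linearly independent sequence dense in the separable Hilbert space $\mathbf{\mathfrak W}$, assemble the finite-dimensional subspaces $\mathbf{\mathfrak W}_N$, and seek $(\bu^N,\bv_2^N)\in\mathbf{\mathfrak W}_N$ satisfying the Galerkin form of \eqref{visc-imperm-weak-1}--\eqref{visc-imperm-weak-2}. As in the earlier proofs this reduces to a linear system of ordinary differential equations for the time-dependent coefficients, supplied with the complete initial data \eqref{visc-imperm-weak-2}, and hence admits a unique solution on all of $[0,T]$.

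The next step is the discrete analogue of the energy identity \eqref{energy-law-3}, obtained by choosing the test pair $(\bw^1,\bw^2)=(\bu_t^N,\bv_2^N)$. Under the stability hypothesis \eqref{stability} the coercivity established in Proposition 3.1 bounds the elastic contribution $\tilde\mu|\bD(\bu^N)|^2+\tilde\lambda(\dive\bu^N)^2$ from below by $c\,|\bD(\bu^N)|^2$, while the dissipative terms $\eta_1|\bD(\bu_t^N)|^2+\mu_1(\dive\bu_t^N)^2+\eta_2|\bD(\bv_2^N)|^2+\mu_2(\dive\bv_2^N)^2+\beta|\bu_t^N-\bv_2^N|^2$ are nonnegative since $\eta_i,\mu_i,\beta>0$. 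Because functions in $\mathbf{\mathfrak W}$ vanish on the portion $\Gamma_0$ of positive measure, Korn's inequality promotes this strain-rate control to full $H^1$ control. The forcing functionals on the right-hand side of \eqref{energy-law-3}, bounded on $H^1$ by the regularity assumed of the data $\uu,P_0,\bu_0$, are estimated by Cauchy--Schwarz and Young's inequalities and absorbed into the dissipation, and a Gronwall argument then yields bounds, uniform in $N$, placing $\bu^N$ in $L^\infty(0,T;H^1(\Om))$ and $\bu_t^N,\bv_2^N$ in $L^2(0,T;H^1(\Om))$.

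Passing to a subsequence that converges weakly in $L^2(0,T;H^1)$ and weakly-$*$ in $L^\infty(0,T;H^1)$ produces a limit $(\bu,\bv_2)$. Since the system is linear in the unknowns, weak convergence suffices to pass to the limit in every Galerkin relation, so that $(\bu,\bv_2)\in\mathbf{\mathfrak W}$ solves \eqref{visc-imperm-weak-1}--\eqref{visc-imperm-weak-2}, with the initial condition recovered as in Temam \cite{temam1}. It then remains to reconstruct the pressure: the residual functional coming from the momentum balance annihilates the constrained subspace, hence by a de Rham--type argument is represented as $\int_\Om\nabla p\cdot[\phi_0\bw^1+(1-\phi_0)\bw^2]\,d\bx$ for a $p$ determined up to the freedom removed by the boundary conditions. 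Uniqueness of $(\bu,\bv_2)$ follows from linearity and the energy identity \eqref{energy-law-3}, and uniqueness of $p$ from the same representation.

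The step I expect to be the main obstacle is this pressure recovery, and with it the regularity $p\in L^2(0,T;H^1(\Om))$ asserted in the statement. In contrast to the proofs of Theorems 4.3 and 4.5, where $p$ was carried as a genuine unknown controlled by a discrete inf-sup condition, here $p$ has been eliminated and must be produced afterward as the multiplier of the \emph{two-field} constraint $\dive[\phi_0\bw^1+(1-\phi_0)\bw^2]=0$ tied to the coupling $\bw^1-\bw^2=\mathbf 0$ on $\pOm$. The scalar inf-sup inequality of Lemma 4.2 must accordingly be replaced by its analogue for this coupled divergence on $\mathbf{\mathfrak W}$; that $\phi_0$ is a constant with $0<\phi_0<1$ (Proposition 2.3) is what allows a reduction to the classical case. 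Finally, a de Rham argument by itself delivers only an $L^2$ pressure, so obtaining the stated $H^1$ regularity requires a separate bootstrap, reading $\nabla p$ off the summed momentum balance $\nabla p=\dive(\mcT_1+\mcT_2)-(\ff_1+\ff_2)$ and invoking the regularity of the data together with elliptic estimates for the coupled stationary operator. Establishing the coupled inf-sup inequality and carrying the pressure regularity through is the delicate point of the argument.
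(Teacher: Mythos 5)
Your construction matches the paper's proof of this theorem in every load-bearing step: Galerkin approximation on a dense linearly independent sequence in the separable Hilbert space $\mathbf{\mathfrak W}$, reduction to a linear ODE system for the coefficients, the discrete energy inequality obtained from the test pair $(\bw^{1,N},\bw^{2,N})=(\bu_t^N,\bv_2^N)$ together with the coercivity supplied by (\ref{stability}) and Korn's inequality on functions vanishing on $\Gamma_0$, uniform bounds for $\bu^N$ in $H^1(0,T;H^1(\Om))$ and $\bv_2^N$ in $L^2(0,T;H^1(\Om))$, weak passage to the limit using linearity, and uniqueness from linearity and the energy law (\ref{energy-law-3}). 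Where you genuinely depart from the paper is the pressure. The paper's weak formulation (Definition 5.1) is posed entirely on the constrained pair $(\bu,\bv_2)\in\mathbf{\mathfrak W}$, and its proof stops once that pair is produced: no inf-sup condition is stated for the coupled constraint $\dive[\phi_0\bw^1+(1-\phi_0)\bw^2]=0$, and $p$ is never reconstructed, even though the theorem asserts $p\in L^2(0,T;H^1(\Omega))$. Your de Rham recovery of $p$ as the multiplier of the two-field constraint, the observation that constancy of $\phi_0$ with $0<\phi_0<1$ reduces the coupled inf-sup inequality to the classical one, and the subsequent bootstrap reading $\nabla p$ off the summed momentum balance to upgrade $p$ from $L^2$ to $H^1$ in space, are all additional to the paper and correctly identify what must be supplied to justify the pressure assertion in the statement; this is the honest price of eliminating $p$ from the weak formulation rather than carrying it with a discrete inf-sup condition as in the inviscid case (Lemma 4.2 and (\ref{inf-supN})). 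In short, your velocity--displacement argument is the paper's argument, and your pressure discussion fills a step the paper leaves implicit rather than contradicting it.
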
 \textit{Proof}: $\mathbf{\mathfrak W}$ is a separable Hilbert space, so there is a sequence of linearly independent smooth functions $\{ (\bw^{1,k},\bw^{2,k}) \}_{k=1}^{\infty} \subset \mathbf{\mathfrak W}$ which is dense in $\mathbf{\mathfrak W}$.  For any integer $N \geq 1$, define the finite dimensional space
\[ \mathbf{\mathfrak W}_N = \textrm{span} \{(\bw^{1,k},\bw^{2,k})\}_{k=1}^N. \]
For any integer $N \geq 1$, we seek $(\bu^N,\bv_1^N) \in \mathbf{\mathfrak W}_N$ satisfying for all $(\bw^{1,N},\bw^{2,N}) \in \mathbf{\mathfrak W}_N$ the equation
\begin{small}{
\begin{eqnarray}
&& \int_{\Om}\big((\tilde\mu\bD(\bu^N) +\tilde \lambda \dive \bu^N I +\eta_1 \bD(\bu_t^N) +\mu_1 \dive \bu_t^N I) \cdot\bD(\bw^{1,N}) +(\eta_2 \bD(\bv_2^N)% \label{visc-imperm-weak-1N} \\
 +\mu_2 \dive \bv_2^N I)\cdot\bD(\bw^{2,N})\big)\nonumber\\ && +\int_{\Om} \beta (\bu_t^N-\bv_2^N)\cdot (\bw^{1,N}-\bw^{2,N}) = \int_{\Gamma} \mathbf G \cdot \bw^{2,N} -\int_{\Om}\nabla\ff_1\cdot\mathbf D(\bw^{1,N})+\nabla\ff_2\cdot\mathbf D(\bw^{2,N}).\label{visc-imperm-weak-2N}
\end{eqnarray}}
\end{small}
It is easy to assert that there exists a unique  $(\bu^N,\bv_2^N) \in \mathbf{\mathfrak W}_N$ for all $t \in [0,T]$.  Take $\bw^{1,N}=\bu_t^N$ and $\bw^{2,N}=\bv_2^N$.  Integrating in time over $[0,T]$ and using (\ref{visc-imperm-weak-2N}) and standard inequalities, we obtain the energy inequality
\begin{eqnarray}
&& \int_{\Om} [\tilde\mu |\bD(\bu^N(T))|^2+\tilde\lambda (\dive \bu^N(T))^2]\,d\bx +\int_0^T \int_{\Om} [\eta_1 |\bD(\bu_t^N)|^2+\frac{1}{2} \mu_1 (\dive \bu_t^N)^2\,d\bx\,dt \label{visc-imperm-ineq} \\
&& + \eta_2 |\bD(\bv_2^N)|^2 +\mu_2 (\dive \bv_2^N)^2 + \beta |\bu_t^N-\bv_2^N|^2]\,d\bx \leq C [||\bD(\bu_0)||_{L^2(\Om)}^2 +||\bg||_{L^2(0,T;L^2(\Gamma))}^2 \nonumber \\
&& +||\uu||_{H^1(0,T;H^1(\Om))}^2 +||\vv||_{L^2(0,T;H^1(\Om))}^2]. \nonumber
\end{eqnarray}
From this inequality and the fact that $T>0$ is finite,  uniform bounds
for 
 $\bu^N$  in $H^1(0,T;H^1(\Om))$, and
$\bv_2^N$  in $L^2(0,T;H^1(\Om))$ follow.
These yield  the existence of  weak limits
 $ \bu \in H^1(0,T;H^1(\Om))$ such that $\bu^N \rightharpoonup \bu$ in $H^1(0,T;H^1(\Om))$, and 
$ \bv_2 \in L^2(0,T;H^1(\Om))$ such that $\bv_2^N \rightharpoonup \bv_2$ in $L^2(0,T;H^1(\Om))$.
As in Theorems 4.3 and 4.5, they are weak solutions of the system.   Uniqueness of weak solutions is a consequence of the linearity of the problem and the energy law (\ref{energy-law-3}).  This completes the proof of the theorem.  $\Box$
\subsection{Fully permeable boundary}
We now assume that $\partial\Omega$ is fully permeable to solvent, and require the  linearized form of the boundary permeability condition (\ref{permeable-pure}) hold: 
\begin{equation} -(1-\phi_0)(p-P_0)\bn+\eta_2\mathbf D(\bv_2)+ \mu_2(\nabla\cdot\bv_2)\bn=0, \,\, \textrm {on} \,\partial\Omega, \label{viscous full permeability} \end{equation}
where $P_0$ is the hydrostatic pressure of the solvent surrounding the gel. Assuming that $P_0\in H^{\frac{1}{2}}(\partial\Omega)$, we denote  $P\in H^1(\Omega) $ its extension to the interior of the domain. The governing system consists of equations (\ref{zero-divergence00})-(\ref{stress T2}) with forcing terms obtained as in  (\ref{h-kappa})-(\ref{Hh}) by setting $\vv=0$ and letting $P$ be as previously mentioned. 
The initial and boundary conditions are as in (\ref{initial u})-(\ref{traction gamma1}) and  (\ref{viscous full permeability}).
\smallskip

\noindent
{\bf Remark.\,}An alternate choice to taking $ \vv=0$ in in (\ref{h-kappa})-(\ref{Hh}) is letting 
$\vv=-\frac{\phi_0}{1-\phi_0}\uu_t$ . This gives $\dive(\phi_0\uu_t+(1-\phi_0)\vv)=0$, and corresponds to a class of solutions with no motion of the center of mass of the gel, with only the relative velocity present. 

The  system satisfies the energy relation (\ref{energy-law-3}).
Setting the space of test functions as
\begin{equation}
\tilde\bW =\{(\bw^1,\bw^2) \in H^1(\Om) \times H^1(\Om): \bw^1, \bw^2|_{\Gamma_0} = \mathbf 0;  \, \dive [\phi_0 \bw^1 +(1-\phi_0) \bw^2] =0 \}, \nonumber
\end{equation} 
weak solutions of the system are defined by relations (\ref{visc-imperm-weak-1}) and (\ref{visc-imperm-weak-2}). 

We now state the following theorem, which proof is analogous to that of the case of impermeable boundary.
\begin{theorem}
Suppose that $(\phi_0, F_0)$,  $\tilde\lambda$ and $\tilde\mu$ are as in theorem 4.3. Suppose that the viscosity coefficients satisfy $\mu_i, \eta_i>0, i=1,2$. Then the governing system
 (\ref{zero-divergence00})-(\ref{stress T2}), with  forcing terms (\ref{h-kappa})-(\ref{Hh})  and satisfying initial and boundary conditions  (\ref{initial u})-(\ref{traction gamma1}) and (\ref{viscous full permeability}) has a unique weak solution
%For some finite $T>0$, assume $\bg \in L^2(0,T;L^2(\Gamma_1))$, $\Pt \in L^2(0,T;L^2(\Gamma_1))$, $\uu \in H^1(0,T;H^1(\Om))$, $\vv \in L^2(0,T;H^1(\Om))$, and $\dive [\frac{\varphi_0}{f_0^n} \uu_t +\kappa \vv]=0$ a.e. in $[0,T] \times \Om$.  Also, assume $\alpha>0$ and $\bu_0 \in H^1(\Om)$ with $\bu_0|_{\Gamma_0} = \mathbf 0$ and $\dive \bu_0 = f_0-f_0^{n-1} -\dive \uu|_{t=0}$.  Then there is a unique weak solution 
$(\bu,\bv_2, p)$ satisfying
$\bu \in H^1(0,T;H^1(\Om)), \,\bv_2 \in L^2(0,T;H^1(\Om)) $ and $  p \in L^2(0,T;H^1(\Omega))$.
\end{theorem}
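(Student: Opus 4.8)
The plan is to run the Faedo--Galerkin scheme of Theorem~5.2 essentially verbatim, since the only structural change from the impermeable case is that the test space $\tilde\bW$ no longer enforces the normal-continuity constraint $\bw^1-\bw^2|_{\partial\Omega}=\mathbf 0$; this is replaced by the natural full-permeability condition (\ref{viscous full permeability}), together with the substitutions $\vv=0$ and $P$ the extension of $P_0$ in the forcing terms (\ref{h-kappa})--(\ref{Hh}). First I would note that $\tilde\bW$ is a separable Hilbert space and pick a linearly independent dense sequence $\{(\bw^{1,k},\bw^{2,k})\}_{k=1}^\infty\subset\tilde\bW$, setting $\tilde\bW_N=\textrm{span}\{(\bw^{1,k},\bw^{2,k})\}_{k=1}^N$. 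I would then seek $(\bu^N,\bv_2^N)\in\tilde\bW_N$ satisfying the weak equations (\ref{visc-imperm-weak-1}) for all test pairs in $\tilde\bW_N$, with initial data prescribed through (\ref{visc-imperm-weak-2}). Because every element of $\tilde\bW$ is weighted-divergence free, the pressure gradient integrates out of the Galerkin equations, which therefore form a linear first-order ODE system for the coefficients of $(\bu^N,\bv_2^N)$; standard ODE theory supplies a unique solution on $[0,T]$.

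Next I would establish the a priori estimate. Choosing the admissible test functions $\bw^{1,N}=\bu_t^N$ and $\bw^{2,N}=\bv_2^N$ and integrating over $[0,T]$ yields the discrete form of the energy relation (\ref{energy-law-3}). Using the stability inequalities (\ref{stability}), so that Proposition~3.1 furnishes coercivity of $\tilde{\mathfrak C}$, together with Korn's inequality and the bounds on the data, I obtain the energy inequality (\ref{visc-imperm-ineq}). Since $T<\infty$, this gives uniform bounds for $\bu^N$ in $H^1(0,T;H^1(\Om))$ and for $\bv_2^N$ in $L^2(0,T;H^1(\Om))$. Passing to weakly convergent subsequences and letting $N\to\infty$ identifies a limit $(\bu,\bv_2)$ solving (\ref{visc-imperm-weak-1})--(\ref{visc-imperm-weak-2}), exactly as in Theorems~4.3 and 4.5; uniqueness then follows from linearity and the energy law (\ref{energy-law-3}).

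The main obstacle is the recovery of the pressure $p\in L^2(0,T;H^1(\Om))$, which does not appear in the constrained weak formulation (\ref{visc-imperm-weak-1}). Since $\tilde\bW$ imposes the weighted incompressibility $\dive[\phi_0\bw^1+(1-\phi_0)\bw^2]=0$, the pressure enters only as the Lagrange multiplier of this constraint and must be reconstructed by a de Rham / inf--sup argument. The delicate point, relative to the impermeable case, is that dropping the constraint $\bw^1-\bw^2|_{\partial\Omega}=\mathbf 0$ enlarges the set of admissible boundary fluxes, so the relevant LBB condition must be re-verified in a form adapted to (\ref{viscous full permeability}) --- analogous to the split inf--sup argument of Lemma~4.2, but now guaranteeing that the recovered multiplier carries the trace information encoded in $(1-\phi_0)(p-P_0)$ on $\partial\Omega$ and genuinely lies in $L^2(0,T;H^1(\Om))$.
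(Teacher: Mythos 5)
Your proposal follows the same route as the paper, which in fact gives no separate proof of this theorem beyond declaring it ``analogous to that of the case of impermeable boundary'' (Theorem 5.2): a Faedo--Galerkin scheme on the weighted-divergence-free space $\tilde\bW$, the a priori bound from testing with $(\bu_t^N,\bv_2^N)$ and the discrete energy law, weak compactness to pass to the limit, and uniqueness from linearity. Your closing paragraph on recovering $p\in L^2(0,T;H^1(\Om))$ via an inf--sup argument adapted to the enlarged set of admissible boundary fluxes addresses a step the paper leaves entirely implicit, and is the right point to be careful about.
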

\section{Linearization about non-spherical equilibria} 
Let us consider the governing system linearized about equilibrium solutions $(\phi_0, F_0)$ that do not necessarily correspond to dilation or compression states.  In addition, such states may not be stress free.  The next proposition establishes an energy law for such systems. 
%The  linear system (\ref{linearized-divergence})-(\ref{linearized-stress-2}) and (\ref{chainrule-linear1})  together with (\ref{hatsigma-lin})-(\ref{linear-stress-general}). For this system, the following energy law and estimates hold.
 \begin{proposition} Suppose that the assumptions of  Proposition 3.5 hold. Let $\mathfrak C$ and $\mathfrak D$ be as in (\ref{C-general}) and  (\ref{D}), respectively. Then   smooth solutions of the system (\ref{linearized-divergence})-(\ref{linearized-stress-2}) and (\ref{chainrule-linear1}),  and  
 (\ref{hatsigma-lin})-(\ref{linear-stress-general}) satisfy the energy relation
 \begin{eqnarray}
%\begin{split}
&& \frac{d}{dt} \int_{\Om} \frac{\phI}{2}[\bigg( \mathfrak C(\nablax\bu^T F_0)\cdot(\nablax\bu^T F_0)^T + \mathfrak D(\nablax\bu;F_0)\bigg)+\pi_{1,2}^0\,\tr^2(F^{-1}_0\nablax\bu)]\,d\bx \\ &&+\int_{\Om} [\eta_1 |\bD(\bu_t)|^2 +\mu_1 (\dive \bu_t)^2\nonumber +\eta_2 |\bD(\bv_2)|^2   +\mu_2 (\dive \bv_2)^2 +\beta |\bu_t-\bv_2|^2]\,d\bx\\
&& = \int_{\Gamma} \mathbf G \cdot \bu_t\,dS -\int_{\Om}\big(\nabla\ff_1\cdot D(\bu_t)+ 
\nabla \ff_2\cdot D(\bv_2)\big)\,d\bx.  \,\,\,\quad  \Box
% \end{split}
  \end{eqnarray}
 \end{proposition}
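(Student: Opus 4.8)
The plan is to test the two linearized momentum balances against the respective component velocities and to recognize the power of the reversible polymer stress as a total time derivative by means of the pointwise identity (\ref{total-time-derivative}). Concretely, I would pair (\ref{linearized-momentum-1}) with the polymer velocity $\bu_t=\tilde\bv$ and (\ref{linearized-momentum-2}) with the fluid velocity $\bv_2=\tilde\bw$, add the two relations, and integrate over $\Om$; after the lifting of the boundary data this produces the interior forcings $\ff_1,\ff_2$ and the boundary datum $\mathbf G$ of (\ref{G}). Each divergence-of-stress term is integrated by parts, yielding a boundary integral over $\pOm$ and an interior term in which the stress contracts with the velocity gradient. Writing $\calT_1=\calT^r+\eta_1\bD(\bu_t)+\mu_1(\dive\bu_t)I$ from (\ref{linearized-stress-1}) and $\calT_2=\eta_2\bD(\bv_2)+\mu_2(\dive\bv_2)I$ from (\ref{linearized-stress-2}), the purely viscous parts contract with $\nabla\bu_t$ and $\nabla\bv_2$ to give, using the symmetry of $\bD$, exactly the dissipation densities $\eta_1|\bD(\bu_t)|^2+\mu_1(\dive\bu_t)^2+\eta_2|\bD(\bv_2)|^2+\mu_2(\dive\bv_2)^2$ on the left of the asserted relation, while the two friction contributions combine to $-\beta|\bu_t-\bv_2|^2$, which moves to the dissipation side with the correct sign.

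For the reversible contribution $\int_\Om\calT^r\cdot\nabla\bv\,d\bx$ I would invoke (\ref{total-time-derivative}), splitting it as $\int_\Om\calT^r(F_0,\phi_0)\cdot\nabla\bv\,d\bx+\phI\frac{d}{dt}\int_\Om\mathcal H\,d\bx$, with $\mathcal H$ the quadratic form (\ref{calH}) assembled from $\mathfrak C$ of (\ref{C-general}), $\mathfrak D$ of (\ref{D}) and the Flory--Huggins term $\pi^0_{1,2}\tr^2(F_0^{-1}\nablax\bu)$. The second summand is, up to the $\tfrac12$ conventions built into (\ref{calH}) that must be reconciled against the displayed density, the time derivative of the stored-energy integrand in the statement. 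I would then eliminate the pressure: the two pressure terms collect into $-\int_\Om(\phi_0\bu_t+(1-\phi_0)\bv_2)\cdot\nabla p\,d\bx$, and one integration by parts turns the interior part into $\int_\Om p\,\dive(\phi_0\bu_t+(1-\phi_0)\bv_2)\,d\bx$, which vanishes identically by the incompressibility constraint (\ref{linearized-divergence}), leaving only a boundary term to be merged with the traction and permeability data.

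The crux, and the reason this case genuinely differs from the spherical, stress-free one, is the leftover equilibrium term $\int_\Om\calT^r(F_0,\phi_0)\cdot\nabla\bv\,d\bx$. I would integrate it by parts once more into $-\int_\Om(\dive\calT^r(F_0,\phi_0))\cdot\bv\,d\bx+\int_{\pOm}(\calT^r(F_0,\phi_0)\bn)\cdot\bv\,dS$; the volume integral is annihilated by the equilibrium equation (\ref{general-equilibrium}), $\dive\calT^r(F_0,\phi_0)=\mathbf 0$, so only the surface integral survives. On $\Gamma_0$ it vanishes because $\bv=\bu_t=\mathbf 0$ there after reduction to homogeneous displacement data, while on $\Gamma$ the equilibrium traction $\calT^r(F_0,\phi_0)\bn$ must be shown to merge with the surviving pressure boundary term and the prescribed traction into the single datum $\mathbf G$, the $\bv_2$ boundary integral being disposed of by the permeability conditions (\ref{impermeable-pure}) or (\ref{permeable-pure}). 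I expect this boundary reconciliation in the presence of nonzero residual stress, rather than any interior estimate, to be the main obstacle; it is precisely where the special role of the rotations enters, since the splitting of $\mathcal H$ into the stretch part $\mathfrak C$ together with $\pi^0_{1,2}$ and the rotation-sensitive part $\mathfrak D$ is what renders the reversible power an exact time derivative. Once this is settled, transposing the dissipation and the total time derivative to their respective sides yields the stated relation, with the forcing collected into the $\nabla\ff_1,\nabla\ff_2$ terms exactly as in the already-established law (\ref{energy-law-3}).
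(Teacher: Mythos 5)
Your proposal is correct and follows essentially the route the paper intends: the paper offers no written proof for this proposition (it closes the display with a $\Box$), relying on the pointwise identity (\ref{total-time-derivative}) of the earlier proposition together with the same multiply-by-velocities, integrate-by-parts, and pressure-elimination computation that produced (\ref{energy-law-3}), which is exactly what you carry out, and you usefully make explicit the disposal of the residual-stress power $\int_{\Om}\calT^r_0\cdot\nabla\bv\,d\bx$ via $\dive\calT^r_0=\mathbf 0$ and the boundary data. The only slight misattribution is your closing remark: the polar decomposition and the special role of rotations enter the \emph{coercivity} of $\mathcal H$ (Theorem 3.5), which is needed to turn this identity into an a priori estimate, not the derivation of the energy identity itself.
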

 Integrating the previous relation  with respect to $t$, and using the coercivity properties of $\mathfrak C$ and  $\mathfrak D$ established in Proposition 3.4,  estimates for $\bu$ follow:
 \begin{eqnarray}
 %\begin{split}
&&\int_{\Omega}\frac{|\alpha_0|}{2}|\nabla\bu F^{-1}_0|^2 +C_2 \tr^2(\nabla\bu^TF_0) +C_3\tr^2(F^{-1}_0\nabla\bu)\nonumber\\
%(|\nablax\bu|^2+|\nablax\bu^T F_0|^2+\tr^2(\nablax\bu)+ \tr^2(\nablax\bu^T F_0))\,d\bx\nonumber\\
&& \leq \int_{\Om} \frac{\phI}{2}[\bigg( \mathfrak C(\nablax\bu^T F_0)\cdot(\nablax\bu^T F_0)^T + \mathfrak D(\nablax\bu; F_0)\bigg)+\pi_{1,2}^0\,\tr^2(F^{-1}_0\nablax\bu)]\,d\bx \nonumber \\
&&+\int_0^T\int_{\Om} [\eta_1 |\bD(\bu_t)|^2 +\mu_1 (\dive \bu_t)^2 +\eta_2 |\bD(\bv_2)|^2  +\mu_2 (\dive \bv_2)^2+\beta |\bu_t-\bv_2|^2]\,d\bx\,dt\nonumber \\ &&=\int_0^T\int_{\Gamma}\mathbf G \cdot \bu_t\,dS\,dt -\int_0^T\int_{\Om}\big(\nabla\ff_1\cdot D(\bu_t)+ 
\nabla \ff_2\cdot D(\bv_2))\,d\bx\,dt.
% \end{split}
 \end{eqnarray}
 With this estimate, the well-posedness of the weak linear systems, in the cases of non-viscous as well as viscous solvent, and for all types of boundary permeability conditions follow. This allows us to extend theorems 4.3 through 5.3  to the more general case of non-spherical equilibria with possible residual stress. 
 
\section{Numerical Simulations}
We present two-dimensional numerical simulations  of the gel models previously analyzed, in the case of a viscous gel immersed in an inviscid solvent, and for, both, impermeable and fully permeable boundary. The goal is to investigate concentrations of stress that may lead to failure of the device, if critical thresholds are attained. 
We developed a fully discrete numerical method based on finite elements. 
All simulations have been performed using the DOLFIN library of the 
FEniCS project \cite{dolfin1, fenics1}.  
The equations are linearized about a  stress-free swollen or  contracted equilibrium state,  which is  consistent with  the gel having residual stress. 
We assume that this state corresponds to  that of the device previous to implantation.

The domain of the gel is the  unit square 
$\Om =[0,1] \times [0,1]$. We construct a uniform mesh of $2048$ 
triangles, each with height $h=2^{-5}$.  We take a uniform partition of the time interval  and use the 
backward Euler method to discretize the PDE system in time. 

We carry out the  non-dimensionalization  of the equations according to the following choices of scales:
\begin{itemize}
\item
 Stresses 
are normalized by the pressure scale  $\mu_E$,  the elastic modulus of the polymer (\ref{example}).
\item 
The Flory-Huggins energy density  (\ref{FH})   is  scaled by the factor $\frac{K\theta}{V_m}$ \cite{suo2}. We set $\chi=0.5$ in (\ref{FH}).
\item  We take the time scale as  $T=\frac{\eta_1}{\mu_E}$ sec, where $\eta_1$ denotes the viscosity coefficient of the polymer. We set the length scale to  $L=1 \textrm{cm}$. \end{itemize}
 We impose mixed displacement-pressure boundary conditions as explained in section 2.2.  We assume the part of the boundary $\Gamma=\{y=0\}\cup\{y=1\} $ is subject to 
 a pressure, $P_0$, that we take   to be consistent with the arterial pressure: $ P_0= 10^4$\,Pa.   Zero boundary 
displacement is imposed throughout $\Gamma_0=\{x=0\}\cup\{x=1\}$. 
  A normalized initial displacement $\bu_0=(\frac{1}{2\pi} \sin (2\pi x) 
f_0(1-f_0^3),y(1-\cos (2\pi x))f_0(1-f_0^3))$ is imposed  in $\Om$, where $f_0$ denotes an equilibrium expansion or compression.   
We compute stress components, labeling normal stresses as $\sigma_{xx}$ and $\sigma_{yy}$, and letting $\sigma_{xy}$ denote the shear stress.  
 The simulations address the following issues:
\begin{enumerate}
\item  The ratio of energy scales, $\frac{\mu_E}{K\theta/V_m}$.  We show simulations for the elastic modulus $\mu_E=10^9Pa$ which reflects values used in polymer made
devices.  The  scale of the Flory-Huggins energy is taken between 1 and $10^{-2}$.
We set  $\eta_1=10^8$ Pa$\times$sec, which results in a time scale of $0.1$ sec.
\item The degree of stiffness, expansion and compressibility of the polymer as represented by the energy exponents $s, q $ and $r$, respectively. 
\item  Type of permeability  of the boundary. 
%\item  Strength of compression or extension of the stress free equilibrium state, which also measures the residual stress.
\end{enumerate}

% We compute displacements and pressures.

%  \begin{figure*}
%  \centerline{\includegraphics[width=3.0in]{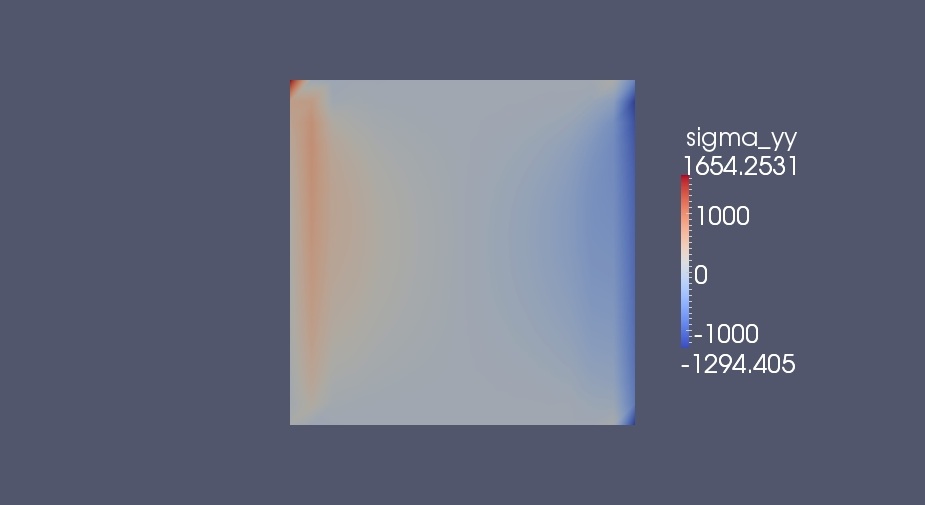}\quad\includegraphics[width=3.0in]{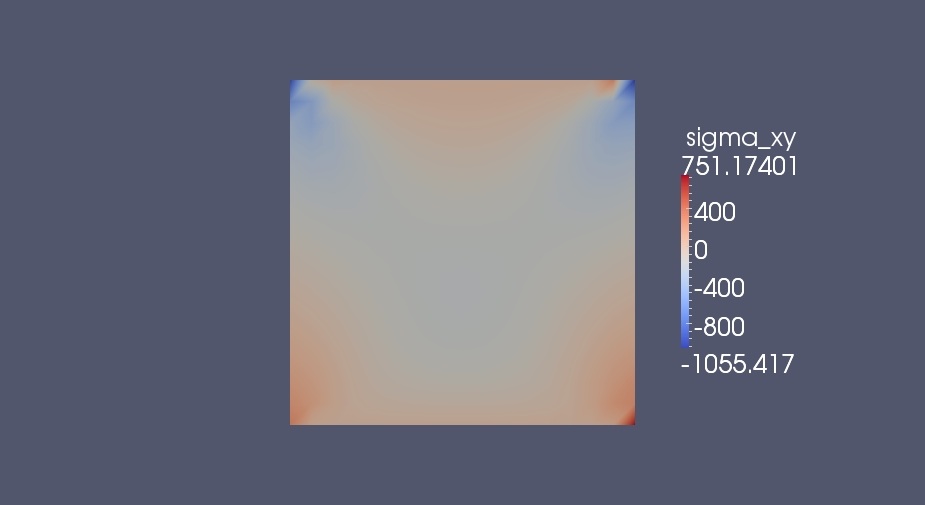}}
%   \centerline{
% \includegraphics[width=2.5in]{impermeable-Sou-expansion1-sigma_yy.jpg}}\quad 
% \includegraphics[width=2.5in]{impermeable-Sou-expansion1-sigma_xy.jpg}}
%\end{figure*}

% Following the work of Suo et al in \cite{suo1}, we impose a traction of $(0,2\mu_E)$ on the $y=0$ portion of $\Gamma_1$ 
%and $(0,-2\mu_E)$ on the $y=1$ portion.  
%This corresponds to a weight pressing against the gel. 

% \begin{figure*}
% \centerline{\includegraphics[width=2.1in]{permeable-Suo-expansion2-sigma_yy.jpg}\,
% \includegraphics[width=2.1in]{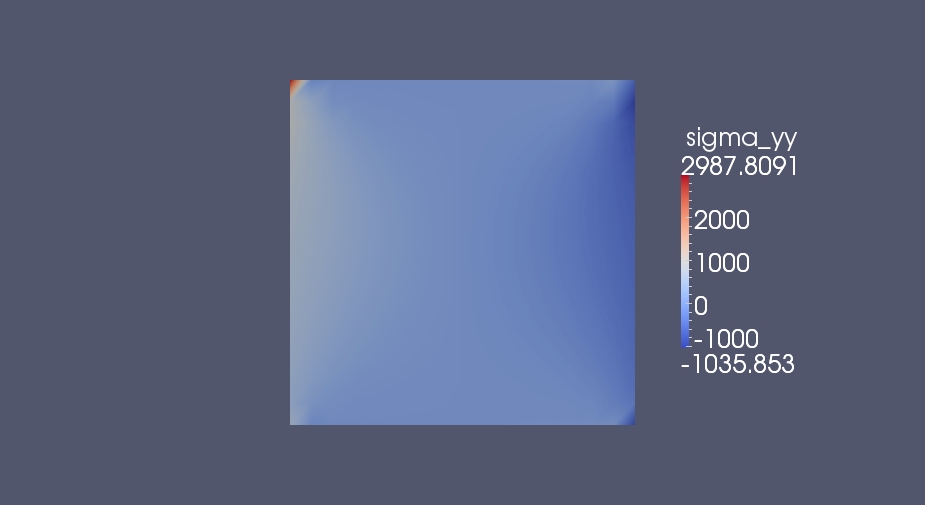}\,  \includegraphics[width=2.1in]{permeable-Suo-contraction2-sigma_yy.jpg}}
%% \centerline{\includegraphics[width=3.0in]{p.jpg}}
%
% \label{figure1}
% \end{figure*}
% 
%\begin{figure*}
% \centerline{\includegraphics[width=2.1in]{permeable-Suo-expansion2-sigma_xy.jpg}\,
% \includegraphics[width=2.1in]{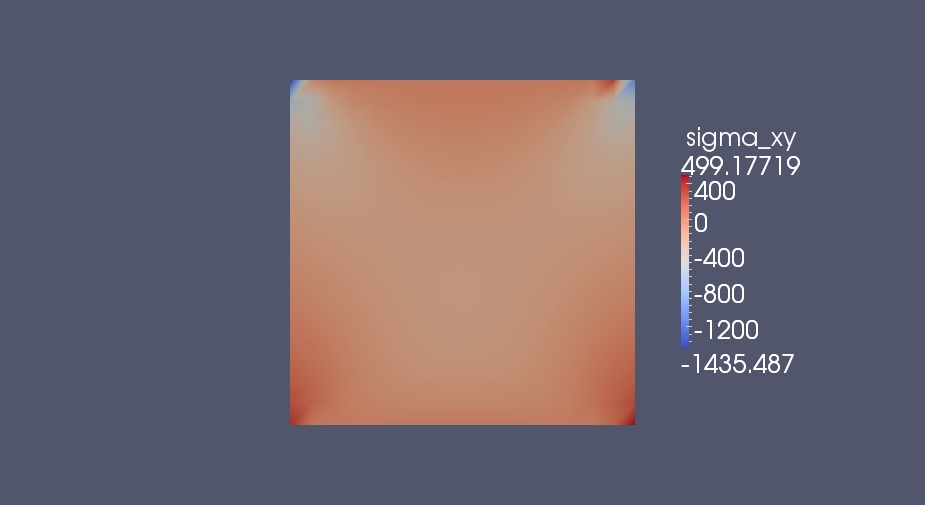}\,  \includegraphics[width=2.1in]{permeable-Suo-contraction2-sigma_xy.jpg}}
%% \centerline{\includegraphics[width=3.0in]{p.jpg}}
% \caption{{Left: Profile of $x$ component of displacement at time $t=T$.  Right: Profile of $y$ component of 
% displacement at $t=T$.  }} 
%% \label{figure1}
% \end{figure*}
  \begin{figure*}
  \centerline{\includegraphics[width=2.4in]{permeable-Suo-expansion2-sigma_yy.jpg}\quad
 \includegraphics[width=2.4in]{permeable-Suo-expansion2-sigma_xy.jpg}}
 \end{figure*}
 \begin{figure*}


 \centerline{\includegraphics[width=2.4in]{impermeable-Suo-expansion2-sigma_yy.jpg}\quad
 \includegraphics[width=2.4in]{impermeable-Suo-expansion2-sigma_xy.jpg}}
 \caption{{Stress components $\sigma_{yy}$ and $\sigma_{xy}$ for $\mu_E=1$GPa, parameters $s=3,
q=1.5,
r=4$ and Flory-Huggins scaling parameter $10^5$Pa. Top row corresponds to fully permeable boundary and bottom row to impermeable.}} 
 \end{figure*}

  \begin{figure*}
  \centerline{\includegraphics[width=2.3in]{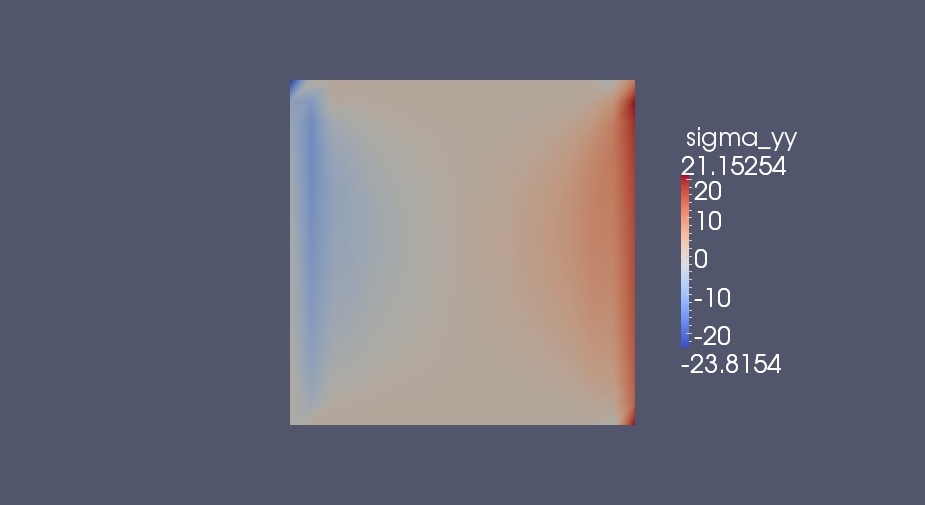}\quad
 \includegraphics[width=2.3in]{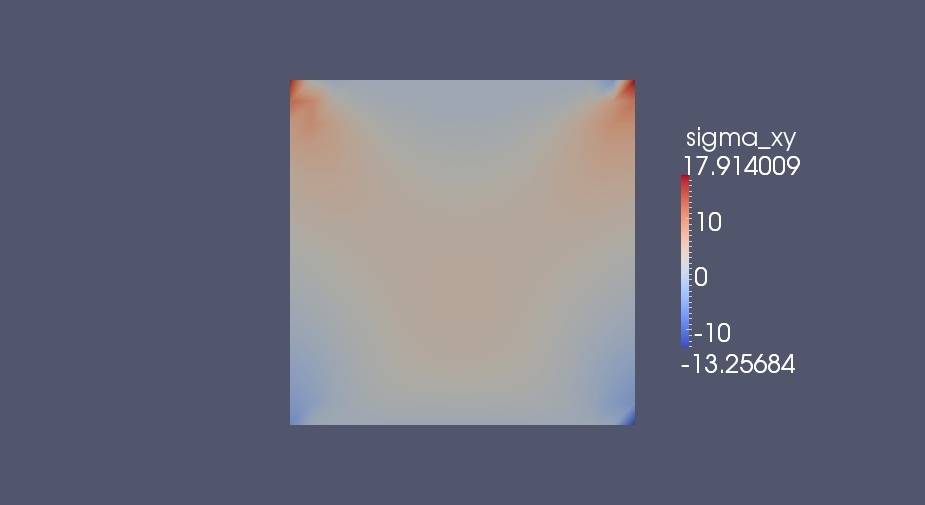}}
% \centerline{\includegraphics[width=2.5in]{impermeable-Suo-expansion2-sigma_yy.jpg}\quad
% \includegraphics[width=2.5in]{impermeable-Suo-expansion2-sigma_xy.jpg}}
 \caption{{Stress components $\sigma_{yy}$ and $\sigma_{xy}$ for $\mu_E=10^9$Pa, parameters $s=1,
q=1.5,
r=1.1$ and Flory-Huggins scaling parameter $10^7$Pa;  fully permeable boundary.}} 
 \end{figure*}
We summarize the findings of our numerical simulations as follows.
\begin{enumerate}
\item The $\sigma_{yy}$ component presents  stress concentration on the two fixed displacement edges, $\Gamma_0$, for the whole range of parameters that we tested. 
 The components $\sigma_{xy}$ and $\sigma_{xx}$ (not included here) show corner concentration. In the case of impermeable boundary, the stress distributes almost uniformly
 across the domain, showing higher values than in the permeable case.  Gels with permeable boundary show a  low stress profile in the interior of the domain, with stresses concentrating on  $\Gamma_0$. 
\item  The boundary stress concentrations of the $\sigma_{yy}$ component may trigger  {\it debonding}  upon reaching a experimentally determined threshold value 
\cite{DoiYamaguchi2006}.
\item  For  a given set of parameters,   stresses in the case of impermeable boundary are higher than their permeable counterparts. This reflects the fact that, 
in a gel with fully permeable boundary, exchange of solvent  takes place across the interface causing some stress relaxation. Moreover,  in the ideal case of pure 
permeability, the fluid exchange takes place without loss of energy.
% In the case of elastic moduli of the order of 1GPa, these stresses may 
%differ by as much as two orders of magnitude.  
\item We have performed simulations with values of $s$ ranging from $s=1$ (Neo-Hookean material) to $s=3$ ({\it hard rubber}), and for values of $r$ 
ranging from $r=1.5 $ (high compressibility) to $r=4$. We found that 
raising either of these exponents by 1, it may increase the stresses by at least by one order of magnitude. 
\item  The stress values, as represented by their maximum and minimum absolute values,  show a decreasing pattern with the increase of the Flory-Huggins  energy
scaling 
with respect to the elastic one, reflecting softenning of the material. 
\item    The  stresss distribution shown in the figures correspond to time equal to one hour.  Calculations done for the same data after one day, show stress values in
the same order of magnitude as the ones presented here. 
\item Whereas the values of stresses shown in Figure 2 may be near the debonding pressure threshold, those in Figure 1 may have already  crossed it.  The experimental literature reports 
on values of the debonding pressure for different materials  and loading conditions ranging from 0.5 to 10 times the elastic modulus $\mu_E$  \cite{shull-creton2004}. 

\end{enumerate}
%\begin{columns}
%\begin{column}{15mm}

%\begin{figure}\scalebox{.15}
%{\includegraphics{}}
%\end{figure}
%\end{column}
%\begin{column}{15mm}
%\begin{figure}
%\scalebox{.15}{
%\includegraphics{}}
%\end{figure}
%\end{column}
%\end{columns}

%
%\begin{figure}
%\centerline{
%{\includegraphics[width=100mm]{gel-parameters}}}
%\caption{\small{Gel parameters ({Urban})}}
%\end{figure}

\section{Conclusions}

We analyzed a model of the dynamics of gels that addresses inviscid and viscous solvent and polymer,  permeability and traction-displacement boundary conditions, elasticity and diffusion.  In particular, 
we focused on the linearized system about relevant equilibrium solutions and  derived  conditions for the solvability of the time dependent problems. 
These are also conditions that guarantee local stability of the equilibrium solutions of stress-free dilation and compression states as well as general equilibria, that is, solutions of traction-displacement boundary value problems of nonlinear elasticity.  Although we  proved well-posedness of the solutions of the time dependent
 equations linearized about dilation and compression states, the energy laws that we derived  would allow us to extend the results to the more general linearized equations in a straight forward manner. In particular, the latter includes reference states with residual  stress. 

%The fact that equilibrium solutions of interest are not stress free brings additional challenge to the analysis, and, in particular, to the derivation of the energy law of such systems. The boundary conditions that we consider are of two types, first displacement-traction equations for the governing equation of the polymer component, and secondly permeability conditions for the fluid equation.   We establish existence of weak solutions for the different boundary permeability conditions and  viscosity assumptions. We conclude with a sample of the numerical simulations obtained with the Taylor-Hood finite element implementation of the model to be presented in a follow up article. 

The  analysis developed in this article answers  specific questions arising in applications. Indeed, the assumptions ensuring stability of equilibrium solutions,
 and the subsequent well-posedness of the time dependent problem are formulated in terms of the parameters of the elastic and Flory-Huggins energies  and  their relative scale. 
Although this is far from sufficient to  identify a material for a specific application, it does provide a criteria to eliminate materials for which instability would occur. 
 This would have an immediate effect on reducing the number of costly and time consuming experiments to test a certain material for application by as much as 50 percent \cite{lyu1}. In addition to the stability characterization of the parameters, the numerical simulations provide data that indicate whether gel pressure has reached the debonding threshold. 
Furthermore, the choice of viscosity and drag coefficients determine the time of relaxation of a disturbance.  

The simulations presented in the paper accurately address boundary conditions encountered in device applications, as well as values of elastic modulus $\mu_E=1GPa$ and Flory-Huggins parameters of realistic device materials.  However, the domains that we use  are two-dimensional and so, cannot represent realistic shapes of devices. 
Another important feature not addressed in the current research is the stress concentration phenomenon at the interface between two different materials of the device.
 Development of numerical tools based on Discontinuous Galerkin methods is currently underway to  simulate actual devices more accurately. 
 
 The  stress corner concentrations that we found have also been observed in gel membrane experiments on drug delivery devices. In this case, though, the presence of ions significantly magnifies the effect {\cite{Siegel-personal}}.

From a different point of view, a better understanding of the debonding phenomenon is needed, perhaps appealing to the problem of cavitation and cavity propagation.  Experimental work on debonding also brings out the viscoelastic aspects of the phenomenon,  so its treatment may require the adoption of  viscoelastic stress strain laws \cite{DoiYamaguchi2006}.

The analysis presented here can be extended to treating triphasic models developed in the study of drug-delivery devices \cite{cushman2008I}, \cite{cushman2008II}. However, 
this extension is not straightforward since the  laws of balance of mass in the latter case  are significantly more challenging. 

From the point of view of analysis, one goal of the forthcoming work is to study the nonlinear problem  within the context  of the Oldroyd-B models of nonlinear elasticity.
We point out that Sections 3.1.1 and 6 deal with the linearization of the system about arbitrary equilibrium solutions. This provides  a necessary  ingredient in the time discretization of a nonlinear model.

\section{Acknowledgements} This work was partially supported by  the National Science Foundation, grant number DMS 0909165. The authors also wish to extend their appreciation
 to Medtronic, Inc., Twin Cities,  for the  financial  support and  technical advice, especially by Dr. Suping Lyu,    throughout the development of the  project. The authors also wish to thank Professors Hans Weinberger, Francisco Javier Sayas, Bernardo Cockburn and Satish Kumar for the many useful discussions.

\bibliography{gel}
\bibliographystyle{plain}

\end{document}